\theoremstyle{plain}
\newtheorem{theorem}{Theorem}[section]
\newtheorem{proposition}{Proposition}[section]
\newtheorem{corollary}{Corollary}[section]
\newtheorem{lemma}{Lemma}[section]
\newtheorem{remark}{Remark}
\newtheorem{example}{Example}[section]
\newtheorem{definition}{Definition}[section]
\numberwithin{equation}{section}
\title[Homogeneous Riemannian structures]{Homogeneous structures of $3$-dimensional Sasakian space forms}
\author[J.~Inoguchi]{Jun-ichi Inoguchi}
\address{Department of Mathematics, 
Hokkaido University, 
Sapporo 
060-0810, Japan}
\email{inoguchi@math.sci.hokudai.ac.jp}
\thanks{The first named author is partially supported by JSPS KAKENHI JP19K03461, JP23K03081.}
\author[Y.~Ohno]{Yu Ohno}
\address{Department of Mathematics, Hokkaido University, 
Sapporo, 060-0810, Japan}
\email{ono.yu.h4@elms.hokudai.ac.jp}
\thanks{The second named author is supported by JST SPRING, Grant Number JPMJSP2119.}
\thanks{Tsukuba Journal of Mathematics \textbf{48} (2024), no.~2, 205--247.}
\date{2024.06.20}
\begin{document}

\begin{abstract}
We give explicit parametrizations for all  
homogeneous contact Riemannian structures 
on $3$-dimensional Sasakian space forms.
\end{abstract}

\keywords{Homogeneous structures, Thurston geometry, Sasakian manifolds, Ambrose-Singer connections, 
Tanaka-Webster connection, non-unimodular Lie groups}

\subjclass[2020]{53C30, 53C05, 53C15, 53C25}
\maketitle

\section*{Introduction}

According to Thurston's classification 
\cite{Thurston} of $3$-dimensional geometries,
there exist eight simply connected model spaces. 
The model spaces are the following homogeneous Riemannian spaces: 
\begin{itemize}
\item space forms: 
Euclidean $3$-space $\mathbb{E}^3$, $3$-sphere 
$\mathbb{S}^3$, hyperbolic $3$-space $\mathbb{H}^3$,
\item reducible Riemannian symmetric spaces: 
$\mathbb{S}^2\times \mathbb{R}$, 
$\mathbb{H}^2\times \mathbb{R}$, 
\item the Heisenberg group $\mathrm{Nil}_3$,
the universal covering group 
$\widetilde{\mathrm{SL}}_2\mathbb{R}\cong \widetilde{\mathrm{SU}}(1,1)$ of 
the special linear group $\mathrm{SL}_2\mathbb{R}\cong\mathrm{SU}(1,1)$.
\item the Minkowski motion group $\mathrm{Sol}_3$.
\end{itemize}  
Other than space forms and product spaces, 
model space are \emph{not} Riemannian symmetric spaces.
As is well known, local symmetry of Riemannian manifolds is 
characterized by the parallelism of the Riemannian 
curvature due to Cartan. 
As a generalization of local symmetry, 
Ambrose and Singer \cite{AS} obtained an infinitesimal
characterization of Riemannian homogeneity of Riemannian manifolds.
They showed that local Riemannian homogeneity is equivalent to the existence of certain tensor field.
Such a tensor field is referred as to a \emph{homogeneous Riemannian structure}.
The moduli space of homogeneous Riemannian structures on a homogeneous Riemannian space 
$(M,g)$ represents roughly the all possible coset space representations of $(M,g)$  
up to isomorphisms. In other words, the moduli space may be regarded as 
the space of \emph{canonical connections} (also called the \emph{Ambrose-Singer connections}). 
Note that Ambrose-Singer connections are 
canonical connections in the sense of Olmos and Sanchez \cite{OS}. 
Katsuda \cite{Katsuda} obtained a pinching theorem for locally 
homogenous Riemannian spaces by using homogeneous Riemannian structures.

\medskip

Among the eight model spaces,
$\mathbb{S}^3$, 
$\mathrm{Nil}_3$, $\widetilde{\mathrm{SL}}_2\mathbb{R}$ 
and $\mathrm{Sol}_3$ admit homogeneous contact structure compatible to the 
metric. In particular, $\mathbb{S}^3$, 
$\mathrm{Nil}_3$, $\widetilde{\mathrm{SL}}_2\mathbb{R}$ are Sasakian space forms.
Note that 3-dimensional simply connected Sasakian space forms are exhausted by
$\mathbb{S}^3$, 
$\mathrm{Nil}_3$, $\widetilde{\mathrm{SL}}_2\mathbb{R}$ and the Berger $3$-spheres.
Moreover Sasakian space forms are \emph{naturally reductive} homogeneous Riemannian spaces 
\cite{BV2}. It should be remarked that all simply connected naturally reductive homogeneous 
$3$-spaces are exhausted by Riemannian symmetric spaces and Sasakian space forms (see \cite{TV}).
Naturally reductive homogeneous spaces have some particular properties. 
For instance, every geodesic symmetry is volume preserving up to sign.

This article has two purposes. 
The first purpose of the present article is to determine all homogeneous Riemannian structures 
as well as homogeneous contact Riemannian structures on $3$-dimensional Sasakian space forms.
We prove that all homogeneous Riemannian structures on a $3$-dimensional Sasakian space 
form constitute a one-parameter family of Ambrose-Singer connections.

\smallskip

In contact Riemannian geometry or CR-geometry, 
certain kinds of linear connections with non-vanishing torsion have been used. 
Tanaka \cite{TanakaBook} 
and Webster \cite{Web} introduced a 
linear connection on contact strongly pseudo-convex CR-manifolds. 
This connection is referred as to the \emph{Tanaka-Webster connection}. 
Note that Sasakian manifolds (of arbitrary odd-dimension) are strongly pseudo-convex CR-manifolds. 
The Tanaka-Webster connection is a metrical connection 
with non-vanishing torsion.

On the other hand, Okumura \cite{Ok} introduced a 
one-parameter family of metrical connections with non-vanishing torsion on 
Sasakian manifolds of arbitrary odd-dimension. One can see that 
Okumura's family includes Tanaka-Webster connection.  

The second purpose of the present article is to study 
relations between these two kinds of connections-- Ambrose-Singer connections and 
Okumura's family of connections (including Tanaka-Webster connection)-- 
derived from different geometric backgrounds. 
We prove that the set of all Ambrose-Singer connections 
on each $3$-dimensional Sasakian space form 
coincides with the Okumura's one-parameter family if 
the holomorphic sectional curvature is equal or greater than $-3$ and not equal to $1$. In the case holomorphic sectional curvature is less than $-3$, there 
the set of all Ambrose-Singer connections consists 
of Okumura's one-parameter family and the 
canonical connection of a certain $3$-dimensional solvable Lie group.

Throughout this paper all manifolds are assumed to be connected.

The authors would like to express their sincere thanks to the referee for her/his 
careful reading of the manuscript and suggestions for improvements 
of the original manuscript. In particular, Lemma \ref{lem:sigma} is suggested by the referee. 


\section{Homogeneous Riemannian spaces}\label{sec:2}
\subsection{}
Let $(M,g)$ be a Riemannian manifold with Levi-Civita connection
$\nabla$. The \emph{Riemannian curvature} $R$ is defined by
\[
R(X,Y)=[\nabla_{X},\nabla_{Y}]-\nabla_{[X,Y]}.
\]
The Ricci tensor field $\mathrm{Ric}$ is defined by
\[
\mathrm{Ric}(X,Y)=\mathrm{tr}_{g}(Z\longmapsto R(Z,Y)X).
\]
\begin{definition}
{\rm
A Riemannian manifold $(M,g)$ is said to be a 
\emph{homogeneous Riemannian space} if there exists a Lie group $G$ of 
isometries which acts transitively on $M$.

More generally, $M$ is said to be \emph{locally homogeneous Riemannian 
space} if for each $p$, $q\in M$, 
there exists a local isometry which sends $p$ to 
$q$.
}
\end{definition}

Without loss of generality, we may assume that 
a homogeneous Riemannian space $(M,g)$ is reductive. Indeed, the following result is 
known (see \textit{e.g.}, \cite{KoSz}):
\begin{proposition}
Any homogenous Riemannian space $(M,g)$ is a reductive homogenous space.
\end{proposition}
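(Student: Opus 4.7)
The plan is to realize $(M,g)$ as a coset space $G/H$ where $H$ is the isotropy subgroup at a point, show that $H$ is compact, and then construct an $\mathrm{Ad}(H)$-invariant complement $\mathfrak{m}$ to $\mathfrak{h}$ in $\mathfrak{g}$ by averaging an inner product over $H$.

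First I would fix a base point $o \in M$ and let $H = \{g \in G : g \cdot o = o\}$, a closed subgroup of $G$. Standard Lie-group-action theory gives a diffeomorphism $M \cong G/H$ identifying $o$ with the coset $eH$. Without loss of generality I would assume $G$ acts effectively on $M$: otherwise replace $G$ by $G/N$, where $N$ is the (normal) kernel of the action, which is still a Lie group of isometries acting transitively, with isotropy $H/N$. The isotropy representation $\rho : H \to \mathrm{GL}(T_{o}M)$, $\rho(h) = (dh)_{o}$, takes values in the orthogonal group $\mathrm{O}(T_{o}M, g_{o})$ because each $h$ is an isometry; and its kernel is trivial, since an isometry of a connected Riemannian manifold that fixes a point and whose differential there is the identity must be the identity.

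The main obstacle is to upgrade this faithful inclusion $\rho : H \hookrightarrow \mathrm{O}(T_{o}M,g_{o})$ to a proof that $H$ is \emph{compact}. The cleanest route is to invoke the Myers--Steenrod theorem: the full isometry group $\mathrm{Iso}(M,g)$ is a Lie group whose isotropy subgroup at any point is compact; since $G \subseteq \mathrm{Iso}(M,g)$ can be taken closed and $H = G \cap \mathrm{Iso}(M,g)_{o}$, we deduce that $H$ is a closed subgroup of a compact Lie group, hence compact. (Alternatively one shows directly that $\rho(H)$ is closed in $\mathrm{O}(T_{o}M,g_{o})$ by using that orbits of $H$ on $M$ are properly embedded.)

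Once compactness of $H$ is established, the rest is standard. Let $\mathfrak{g}$ and $\mathfrak{h}$ be the Lie algebras of $G$ and $H$. Pick any positive definite inner product $\langle\cdot,\cdot\rangle_{0}$ on $\mathfrak{g}$ and average it over $H$ with respect to a bi-invariant Haar measure $dh$:
\[
  \langle X, Y\rangle \;=\; \int_{H} \langle \mathrm{Ad}(h)X,\, \mathrm{Ad}(h)Y\rangle_{0}\, dh,
  \qquad X,Y \in \mathfrak{g}.
\]
By construction this inner product is $\mathrm{Ad}(H)$-invariant. Since $\mathfrak{h}$ is $\mathrm{Ad}(H)$-stable (it is the Lie algebra of the subgroup $H$), its orthogonal complement
\[
  \mathfrak{m} \;=\; \mathfrak{h}^{\perp}
\]
with respect to $\langle\cdot,\cdot\rangle$ is also $\mathrm{Ad}(H)$-stable, and $\mathfrak{g} = \mathfrak{h} \oplus \mathfrak{m}$ as vector spaces. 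This is by definition a reductive decomposition, so $(M,g) = G/H$ is reductive, completing the proof.
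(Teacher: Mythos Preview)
Your argument is correct and is the standard one. Note, however, that the paper does not actually supply its own proof of this proposition: it simply records the result as known and cites Kowalski--Szenthe \cite{KoSz}. So there is no ``paper's proof'' to compare against; you have filled in what the authors chose to outsource.

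One small point worth tightening: the phrase ``$G \subseteq \mathrm{Iso}(M,g)$ can be taken closed'' is doing real work and deserves a sentence of justification. The given abstract Lie group $G$ comes with an injective homomorphism into $\mathrm{Iso}(M,g)$ (by effectiveness), but that map need not be a closed embedding, so the original $H$ need not be compact. What saves you is that the proposition only asks for \emph{some} reductive presentation: since $M$ is connected and homogeneous (hence complete), Myers--Steenrod makes $\mathrm{Iso}(M,g)$ a Lie group with compact isotropy, and its identity component still acts transitively. Replacing $G$ by $\mathrm{Iso}(M,g)^{0}$ (or by the closure of the image of $G$) gives a transitive Lie group with compact isotropy, after which your averaging argument goes through verbatim.
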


\subsection{}
Ambrose and Singer \cite{AS} 
gave an \emph{infinitesimal characterization} of 
local homogeneity of Riemannian manifolds. 
To explain their characterization we recall the following notion:

\begin{definition}{\rm
A \emph{homogeneous Riemannian structure} $S$ on $(M,g)$ is
a tensor field of type $(1,2)$ which satisfies
\begin{equation}
\tilde{\nabla}{g}=0,
\quad 
\tilde{\nabla}{R}=0,
\quad 
\tilde{\nabla}{S}=0.
\end{equation}
Here $\tilde{\nabla}$ is a linear connection on $M$ defined
by $\tilde{\nabla}=\nabla+S$. The linear connection $\tilde{\nabla}$ is called 
the \emph{Ambrose-Singer connection}.
}
\end{definition}
\begin{remark}{\rm
The sign conventions of $R$ and $S$ are opposite 
to the ones used in \cite{TV,CalLo}. 
}
\end{remark}

Let $(M,g)=G/H$ be a homogeneous Riemannian space.
Here $G$ is a connected Lie group acting transitively
on $M$ as a group of isometries. 
Without loss of generality we can assume that $G$ acts 
\emph{effectively} on $M$. 

The subgroup $H$ is the isotropy subgroup
of $G$ at a point $o\in M$ which will be called the \emph{origin} of $M$. 
For any $a\in G$, 
the translation $\tau_{a}$ by $a$ is 
a diffeomorphism on $M$ defined by $\tau_{a}(bH)=(ab)H$. 
Denote by $\mathfrak{g}$ and $\mathfrak{h}$ the Lie algebras of $G$ and
$H$, respectively. Then there exists a linear subspace 
$\mathfrak{m}$ of $\mathfrak{g}$ which is $\mathrm{Ad}(H)$-invariant.
If $H$ is connected, then $\mathrm{Ad}(H)$-invariant property 
of $\mathfrak{m}$ is equivalent to
the condition $[\mathfrak{h},\mathfrak{m}]\subset
\mathfrak{m}$, \textit{i.e.}, $G/H$ is reductive.
The tangent space 
$\mathrm{T}_{p}M$ of $M$ at a point 
$p=\tau_a(o)$ is identified with $\mathfrak{m}$ via the isomorphism
\[
\mathfrak{m}\ni
X
\longleftrightarrow 
X^{*}_{p}=\frac{\mathrm{d}}{\mathrm{d}t}\biggr \vert_{t=0}
\tau_{\exp(tX)}(p)
\in\mathrm{T}_{p}M.
\]
Then the canonical connection 
$\tilde{\nabla}=\nabla^{\mathrm c}$ is given by
\[
(\tilde{\nabla}_{X^*}Y^{*})_o
=-([X,Y]_{\mathfrak m})^{*}_o,
\quad 
X,Y 
\in \mathfrak{m}.
\]
For any vector $X\in\mathfrak{g}$, 
the $\mathfrak{m}$-component of $X$ is 
denoted by $X_{\mathfrak{m}}$.  
One can see the difference tensor field $S=\tilde{\nabla}-\nabla$
is a homogeneous Riemannian structure.
Thus every homogeneous Riemannian space admits homogeneous Riemannian structures.

Conversely, let $(M,S)$ be a simply connected Riemannian manifold with a
homogeneous Riemannian structure. Fix a point $o\in M$ and put 
$\mathfrak{m}=\mathrm{T}_{o}M$. Denote by $\tilde{R}$ the curvature of the 
Ambrose-Singer connection $\tilde{\nabla}$. Then the holonomy algebra
$\mathfrak{h}$ of $\tilde{\nabla}$ 
is the Lie subalgebra of the Lie algebra 
$\mathfrak{so}(\mathfrak{m},g_o)$ 
generated by the curvature 
operators $\tilde{R}(X,Y)$ with 
$X$, $Y\in\mathfrak{m}$.

Now we define a Lie algebra structure on the direct sum 
$\mathfrak{g}=\mathfrak{h}\oplus \mathfrak{m}$
by (see \cite{AS,TV})
\begin{align*}
[U,V]=& UV-VU,
\notag \\
[U,X]=& U(X),
\notag 
\\
[X,Y]=& -\tilde{R}(X,Y)-S(X)Y+S(Y)X
\end{align*}
for all $X$, $Y\in \mathfrak{m}$ and $U$, $V\in \mathfrak{h}$.

Now let $\tilde{G}$ be the simply connected Lie group with
Lie algebra $\mathfrak{g}$. Then $M$ is a coset manifold $\tilde{G}/\tilde{H}$, where
$\tilde{H}$ is a Lie subgroup of $\tilde{G}$ with Lie algebra $\mathfrak{h}$.
Let $\Gamma$ be the set of all elements in $G$ which act trivially on $M$. Then
$\Gamma$ is a discrete normal subgroup of $\tilde{G}$ and $G=\tilde{G}/\Gamma$
acts transitively and effectively on $M$ as an isometry group.
The isotropy subgroup $H$ of $G$ at $o$ is $H=\tilde{H}/\Gamma$.  
Hence $(M,g)$ is a homogeneous Riemannian space with coset
space representation $M=G/H$.

\begin{theorem}[\cite{AS}]
A Riemannian manifold $(M,g)$ with a homogeneous Riemannian structure $S$
is locally homogeneous. 
\end{theorem}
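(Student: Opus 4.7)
The plan is to construct, for any two points $p, q \in M$, a local isometry carrying a neighborhood of $p$ onto a neighborhood of $q$. The main tool is the Ambrose--Singer connection $\tilde\nabla = \nabla + S$; local homogeneity will follow once the $\tilde\nabla$-parallel transport along a curve from $p$ to $q$ is shown to extend to such an isometry via the exponential map of $\tilde\nabla$. This is essentially a Cartan--Ambrose--Hicks-type argument adapted to a metric connection with torsion.

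First I would record the parallelism consequences of the defining identities $\tilde\nabla g = \tilde\nabla R = \tilde\nabla S = 0$. Because $\nabla$ is torsion-free, the torsion of $\tilde\nabla$ is $T^{\tilde\nabla}(X,Y) = S(X)Y - S(Y)X$, which is $\tilde\nabla$-parallel since $S$ is. A short computation expresses the curvature $\tilde R$ of $\tilde\nabla$ algebraically in terms of $R$, $S$ and $\nabla S$, and $\nabla S$ itself can be rewritten using $\tilde\nabla S$ and $S$; hence $\tilde\nabla\tilde R = 0$ as well. Thus $g$, $S$, $T^{\tilde\nabla}$ and $\tilde R$ are all parallel with respect to $\tilde\nabla$.

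Next, fix points $p, q \in M$ and a piecewise smooth curve $\gamma$ from $p$ to $q$. The $\tilde\nabla$-parallel transport $P_\gamma : T_pM \to T_qM$ is a linear isomorphism which is an isometry (from $\tilde\nabla g = 0$) and intertwines the values of $S$, $T^{\tilde\nabla}$ and $\tilde R$ at $p$ and at $q$. On a normal neighborhood $U$ of $0 \in T_pM$ for $\tilde\nabla$, define
\[
f := \exp_q^{\tilde\nabla} \circ P_\gamma \circ \bigl(\exp_p^{\tilde\nabla}|_U\bigr)^{-1},
\]
so that $f(p) = q$ and $df_p = P_\gamma$ is a linear isometry.

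The main technical step, and the expected obstacle, is to show that $f$ is actually an isometry. This is the standard Cartan--Ambrose--Hicks strategy: along each radial $\tilde\nabla$-geodesic from $p$, the $\tilde\nabla$-parallel translates of $\tilde R$ and $T^{\tilde\nabla}$ coincide with their values at $p$, so the Jacobi-type ODEs governing the differentials of $\exp_p^{\tilde\nabla}$ and $\exp_q^{\tilde\nabla}$ are identical once identified by $P_\gamma$. This forces $f_*\tilde\nabla = \tilde\nabla$, and the parallelism of $S$ gives $f_*S = S$; combining these yields $f_*\nabla = f_*(\tilde\nabla - S) = \nabla$. Since $df_p$ is a linear isometry and $g$ is $\nabla$-parallel, it follows that $f^*g = g$ on the neighborhood, which is what we wanted. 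An alternative algebraic route, sketched in the excerpt, would instead define the Lie algebra $\mathfrak g = \mathfrak h \oplus \mathfrak m$ with the bracket displayed before the theorem, verify the Jacobi identity using $\tilde\nabla R = \tilde\nabla S = 0$, and integrate to a transitive isometry group; I prefer the first approach because it delivers the local statement directly, without a simple-connectivity assumption.
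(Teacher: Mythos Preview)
The paper does not actually supply a proof of this theorem: it is stated with the citation \cite{AS} and no argument follows. What precedes the theorem is a sketch, in the simply connected and complete case, of the algebraic construction you mention at the end of your proposal: one sets $\mathfrak{m}=\mathrm{T}_oM$, takes $\mathfrak{h}$ to be the holonomy algebra of $\tilde\nabla$, defines the bracket on $\mathfrak{g}=\mathfrak{h}\oplus\mathfrak{m}$ via $\tilde R$ and $S$, and integrates to a transitive group of isometries. That route yields global homogeneity under the extra hypotheses, and local homogeneity in general would then be obtained by passing to the universal cover of a small geodesic ball.

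Your direct Cartan--Ambrose--Hicks argument is correct and is a genuinely different route. The key observations---that $\tilde\nabla$ has $\tilde\nabla$-parallel torsion and curvature, so that $\tilde\nabla$-parallel transport along any curve intertwines $(g,\tilde R,T^{\tilde\nabla},S)$ and therefore extends via $\exp^{\tilde\nabla}$ to a local affine map for $\tilde\nabla$, hence for $\nabla=\tilde\nabla-S$, hence to a local isometry---are exactly what is needed, and they deliver the \emph{local} statement directly without any appeal to simple connectivity or completeness. By contrast, the algebraic construction in the paper's sketch packages the same infinitesimal data into a Lie algebra and buys you an explicit coset model $G/H$ together with the identification of $\tilde\nabla$ as the canonical connection of that reductive decomposition; this is more informative when one cares about the transitive group, but is heavier machinery for the bare local-homogeneity assertion. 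Your one cautionary remark is well placed: the Jacobi-equation step must use the version of the affine Cartan--Ambrose--Hicks theorem for connections with torsion (parallel $\tilde R$ \emph{and} parallel $T^{\tilde\nabla}$), which you have.
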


\subsection{}
For a homogeneous Riemannian structure $S$ on a Riemannian manifold 
$(M,g)$, we denote by $S_{\flat}$ the covariant tensor field metrically 
equivalent to $S$, that is,
\[
S_{\flat}(X,Y,Z)=g(S(X)Y,Z)
\]
for all $X$, $Y$, $Z\in \varGamma(\mathrm{T}M)$.
The metrical condition 
$\tilde{\nabla}g=0$ is rewritten as
\begin{equation}\label{eq:AS-met}
S_{\flat}(X,Y,Z)+S_{\flat}(X,Z,Y)=0
\end{equation}
for all vector fields $X$, $Y$ and $Z$.

Tricerri and Vanhecke \cite{TV} obtained the following decompositions 
of all possible types of homogeneous Riemannian structures into eight classes:
\begin{center}
\begin{tabular}{|l|l|}
\hline
Classes & Defining conditions  \\
\hline
Symmetric   & $S=0$    \\
\hline
$\mathcal{T}_1$   & $S_{\flat}(X,Y,Z)=g(X,Y)\omega(Z)-g(Z,X)\omega(Y)$  for some $1$-form $\omega$\\
\hline
$\mathcal{T}_2$ & $\underset{X,Y,Z}{\mathfrak{S}}\, S_{\flat}(X,Y,Z)=0$ and $c_{12}(S_\flat)=0$ \\
\hline
$\mathcal{T}_3$ & $S_{\flat}(X,Y,Z)+S_{\flat}(Y,X,Z)=0$  \\
\hline
$\mathcal{T}_{1}\oplus \mathcal{T}_2$ & $\underset{X,Y,Z}{\mathfrak{S}}\, S_{\flat}(X,Y,Z)=0$ \\
\hline
$\mathcal{T}_{1}\oplus \mathcal{T}_3$
&
$
S_{\flat}(X,Y,Z)+S_{\flat}(Y,X,Z)=2g(X,Y)\omega(Z)-g(Z,X)\omega(Y)-g(Y,Z)\omega(X)
$
\\
{} &
for some $1$-form $\omega$\\
\hline  
$\mathcal{T}_2\oplus \mathcal{T}_3$ & $c_{12}(S_\flat)=0$ \\
\hline
$\mathcal{T}_{1}\oplus\mathcal{T}_2\oplus \mathcal{T}_3$
& no conditions
\\
\hline
\end{tabular}
\end{center} 

\medskip

Here $\underset{X,Y,Z}{\mathfrak{S}}\,S_{\flat}$ denotes the cyclic sum of $S_\flat$, \textit{i.e.},
\[
\underset{X,Y,Z}{\mathfrak{S}}\, S_{\flat}(X,Y,Z)=
S_{\flat}(X,Y,Z)+S_{\flat}(Y,Z,X)+S_{\flat}(Z,X,Y).
\]
Next $c_{12}$ denotes the contraction operator in $(1,2)$-entries;
\[
c_{12}(S_{\flat})(Z)=\sum_{i=1}^{n}S_{\flat}(e_i,e_i,Z),
\] 
where $\{e_1, e_2, \dots ,e_n\}$ is an arbitrary local orthonormal frame field.

\subsection{}

A reductive homogeneous Riemannian space $M=G/H$ with 
Lie subspace $\mathfrak{m}$ is said to be 
\emph{naturally reductive} if 
every geodesic through $o\in M$ and tangent to $X\in \mathfrak{m}=\mathrm{T}_{o}M$
is the {orbit of the one-parameter subgroup $\{\exp(tX)\}$.

The following infinitesimal reformulation of natural reducibility is 
useful.

\begin{proposition}
A reductive homogeneous Riemannian space $M=G/H$ with 
Lie subspace $\mathfrak{m}$ is naturally reductive if and only if
\[
\langle [X,Y]_{\mathfrak m},Z
\rangle+
\langle Y,[X,Z]_{\mathfrak m}
\rangle
=0
\]
for all $X$, $Y$, $Z\in \mathfrak{m}$. 
Here $\langle\cdot,\cdot\rangle$ is the inner product 
of $\mathfrak{m}$ induced from the Riemannian metric $g$ of $M$.
\end{proposition}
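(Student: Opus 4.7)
The plan is to reduce natural reducibility to the vanishing of $(\nabla_{X^*}X^*)_o$ for every $X\in\mathfrak{m}$, then to compute this covariant derivative explicitly via the Nomizu-type formula for the Levi-Civita connection on a reductive homogeneous space. The stated identity will emerge from polarizing a quadratic identity in $X$.

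First I would translate the geometric definition into a pointwise algebraic one. The orbit $\gamma_{X}(t)=\exp(tX)\cdot o$ has velocity $\dot\gamma_{X}(t)=X^{*}_{\gamma_{X}(t)}$; since $\tau_{\exp(sX)}$ is an isometry that sends $\gamma_{X}(t)$ to $\gamma_{X}(t+s)$, the curve $\gamma_{X}$ is a geodesic on all of $\mathbb{R}$ iff $(\nabla_{X^{*}}X^{*})_{o}=0$. Hence $M=G/H$ is naturally reductive iff $(\nabla_{X^{*}}X^{*})_{o}=0$ for every $X\in\mathfrak{m}$.

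Next I would express $(\nabla_{X^{*}}Y^{*})_{o}$ in terms of brackets in $\mathfrak{g}$. Because each $X^{*}$ is a Killing field and $[X^{*},Y^{*}]=-[X,Y]^{*}$, applying Koszul's formula at $o$ (together with the identification $\mathrm{T}_{o}M\cong\mathfrak{m}$) yields the standard Nomizu-type formula
\[
(\nabla_{X^{*}}Y^{*})_{o}=-\tfrac{1}{2}[X,Y]_{\mathfrak m}+U(X,Y),
\]
where $U:\mathfrak{m}\times\mathfrak{m}\to\mathfrak{m}$ is the symmetric bilinear map defined by
\[
2\langle U(X,Y),Z\rangle=\langle [Z,X]_{\mathfrak m},Y\rangle+\langle X,[Z,Y]_{\mathfrak m}\rangle,\qquad Z\in\mathfrak{m}.
\]
Setting $Y=X$ gives $(\nabla_{X^{*}}X^{*})_{o}=U(X,X)$, because $[X,X]_{\mathfrak m}=0$.

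Therefore $M$ is naturally reductive iff $U(X,X)=0$ for all $X\in\mathfrak{m}$, and by the definition of $U$ this is equivalent to the scalar identity
\[
\langle [Z,X]_{\mathfrak m},X\rangle=0\qquad\text{for all }X,Z\in\mathfrak{m}.
\]
Finally I would polarize this quadratic identity in $X$ (substitute $X\mapsto X+tY$ and read off the coefficient of $t$) to obtain $\langle[Z,X]_{\mathfrak m},Y\rangle+\langle[Z,Y]_{\mathfrak m},X\rangle=0$, and then relabel $(Z,X,Y)\mapsto(X,Y,Z)$ to reach the stated form
\[
\langle[X,Y]_{\mathfrak m},Z\rangle+\langle Y,[X,Z]_{\mathfrak m}\rangle=0.
\]
The main point requiring care is the Nomizu formula in the second paragraph: one must be careful with sign conventions when identifying $\mathrm{T}_{o}M$ with $\mathfrak{m}$ via fundamental vector fields, since $[X^{*},Y^{*}]=-[X,Y]^{*}$ and $X^{*}$ being Killing give opposite-looking contributions. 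Once this identification is fixed, the remaining steps are a straightforward polarization argument.
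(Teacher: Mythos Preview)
Your argument is correct and is the standard proof of this classical fact. Note, however, that the paper does not actually supply a proof of this proposition: it is stated as a well-known infinitesimal reformulation of natural reductivity and is simply quoted. So there is no ``paper's own proof'' to compare against; your write-up would serve as a self-contained justification where the paper relies on the literature.

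A couple of minor remarks on presentation. First, when you invoke the Nomizu formula, it is worth stating explicitly that the Killing property of $X^{*}$ kills the three derivative terms in Koszul's formula, so that only the bracket terms survive; this is what makes the computation at $o$ purely algebraic. Second, in the final relabelling step, the passage from $\langle[Z,X]_{\mathfrak m},Y\rangle+\langle[Z,Y]_{\mathfrak m},X\rangle=0$ to the stated form uses both the relabelling $(Z,X,Y)\mapsto(X,Y,Z)$ and the symmetry of $\langle\cdot,\cdot\rangle$; it is clear, but worth writing out once so the sign is unambiguous. With these small expansions the proof is complete.
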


It should be remarked that the notion of 
``naturally reductive" depends on the choice of the Lie subspace $\mathfrak{m}$. 
To avoid the ambiguity, the notion of naturally reductive space is introduced as
follows:

\begin{definition}{\rm
A reductive homogeneous Riemannian space $M=G/H$ is said to be 
\emph{naturally reductive} if it admits a Lie subspace $\mathfrak{m}$ 
such that $G/H$ is naturally reductive with respect to $\mathfrak{m}$.
}
\end{definition}

Tricerri and Vanhecke obtained the following 
characterizations.

\begin{theorem}[\cite{TV}, \cite{TV2}]
Let $M=G/H$ be a 
naturally reductive homogeneous space with canonical 
connection $\tilde{\nabla}$. 
Then $S=\tilde{\nabla}-\nabla$ is a homogeneous Riemannian 
structure which satisfies 
\begin{equation}\label{AST3}
S(X)X=0
\end{equation}
for all $X\in \varGamma(\mathrm{T}M)$.
Namely $S$ is of type $\mathcal{T}_3$.
Conversely, a simply connected and complete Riemannian manifold
$(M,g,S)$ together with a homogeneous structure
$S$ is naturally reductive if and
only if $S$ satisfies \eqref{AST3}.
\end{theorem}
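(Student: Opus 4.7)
The plan is to compute $S=\tilde{\nabla}-\nabla$ explicitly in terms of the underlying reductive Lie algebra data, and then read off the $\mathcal{T}_3$ symmetry directly.

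For the direct implication, I would start from the Nomizu formula: on a reductive homogeneous Riemannian space $M=G/H$ with decomposition $\mathfrak{g}=\mathfrak{h}\oplus\mathfrak{m}$, the Levi-Civita connection at the origin $o$ is
\[
(\nabla_{X^{*}}Y^{*})_o=-\tfrac{1}{2}[X,Y]_{\mathfrak{m}}-U(X,Y),\qquad X,Y\in\mathfrak{m},
\]
where the symmetric bilinear map $U\colon\mathfrak{m}\times\mathfrak{m}\to\mathfrak{m}$ is characterised by
\[
2\langle U(X,Y),Z\rangle=\langle [Z,X]_{\mathfrak{m}},Y\rangle+\langle X,[Z,Y]_{\mathfrak{m}}\rangle.
\]
Under the naturally reductive hypothesis the right-hand side vanishes for all $Z\in\mathfrak{m}$, so $U\equiv 0$ and $(\nabla_{X^{*}}Y^{*})_o=-\tfrac{1}{2}[X,Y]_{\mathfrak{m}}$. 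Combining this with $(\tilde{\nabla}_{X^{*}}Y^{*})_o=-[X,Y]_{\mathfrak{m}}$, I obtain $(S(X^{*})Y^{*})_o=-\tfrac{1}{2}[X,Y]_{\mathfrak{m}}$, hence
\[
S_{\flat}(X,Y,Z)_{o}=-\tfrac{1}{2}\langle [X,Y]_{\mathfrak{m}},Z\rangle.
\]
This expression is manifestly skew in $(X,Y)$ by antisymmetry of the bracket, and skew in $(Y,Z)$ by the naturally reductive identity; together with metric compatibility \eqref{eq:AS-met} this makes $S_\flat$ totally skew-symmetric at $o$. In particular $S(X)X=0$ at $o$, and by homogeneity everywhere, so $S$ is of type $\mathcal{T}_3$.

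For the converse, I invoke the Ambrose--Singer reconstruction recalled in Section~\ref{sec:2}: since $(M,g)$ is simply connected and complete and $S$ is a homogeneous Riemannian structure, $M\cong\tilde{G}/\tilde{H}$ with reductive decomposition $\mathfrak{g}=\mathfrak{h}\oplus\mathfrak{m}$ and the bracket on $\mathfrak{m}$ given by $[X,Y]=-\tilde{R}(X,Y)+S(Y)X-S(X)Y$. Since $\tilde{R}(X,Y)\in\mathfrak{h}$, we read off
\[
[X,Y]_{\mathfrak{m}}=S(Y)X-S(X)Y.
\]
The hypothesis $S(X)X=0$ polarises to $S(X)Y+S(Y)X=0$, so $S_{\flat}$ is skew in its first two slots; combined with \eqref{eq:AS-met} it is totally skew. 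Then
\[
\langle [X,Y]_{\mathfrak{m}},Z\rangle+\langle Y,[X,Z]_{\mathfrak{m}}\rangle
=-S_{\flat}(X,Y,Z)+S_{\flat}(Y,X,Z)-S_{\flat}(X,Z,Y)+S_{\flat}(Z,X,Y),
\]
which vanishes by total skew-symmetry of $S_{\flat}$, exhibiting the naturally reductive condition of the previous proposition with respect to $\mathfrak{m}$.

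The main obstacle I expect is pinning down the correct signs and factors in Nomizu's formula under the sign conventions in force here (the remark after the definition of $S$ notes that the present conventions differ from those of \cite{TV,CalLo}); once these are fixed, both directions reduce to a short symmetry computation on $S_{\flat}$ and the bracket formula.
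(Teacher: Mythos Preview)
The paper does not actually prove this theorem: it is quoted from \cite{TV,TV2} as background and no argument is given in the paper itself. There is therefore nothing in the paper to compare your proposal against.

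That said, your sketch is essentially the standard proof one finds in Tricerri--Vanhecke. The forward direction is exactly Nomizu's computation: naturally reductive kills the $U$-term, leaving $S(X)Y=-\tfrac12[X,Y]_{\mathfrak m}$, and the total skew-symmetry of $S_\flat$ then follows from bracket antisymmetry plus the naturally reductive identity. For the converse you correctly read off $[X,Y]_{\mathfrak m}=S(Y)X-S(X)Y$ from the reconstructed Lie bracket in Section~\ref{sec:2}, and the verification of the naturally reductive condition is the short skew-symmetry computation you wrote down. Your caveat about sign conventions is well taken but does not affect the structure of the argument; once the signs in the paper's formula $(\tilde\nabla_{X^*}Y^*)_o=-([X,Y]_{\mathfrak m})^*_o$ and the bracket $[X,Y]=-\tilde R(X,Y)-S(X)Y+S(Y)X$ are matched, your computation goes through verbatim.
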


\begin{theorem}[\cite{TV}]
Let $(M,g)$ be a complete and 
simply connected Riemannian manifold.  
Then $M$ is a naturally reductive homogeneous space 
if and only if there exists a homogeneous Riemannian structure 
$S$ so that $\tilde{\nabla}=\nabla+S$ is projectively equivalent to 
the Levi-Civita connection $\nabla$.
\end{theorem}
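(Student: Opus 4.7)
My plan is to reduce the statement to the preceding theorem by showing that projective equivalence of $\tilde{\nabla}=\nabla+S$ to the Levi-Civita connection $\nabla$ is equivalent to $S$ being of type $\mathcal{T}_{3}$, i.e.\ to $S(X)X=0$ for all $X\in\varGamma(\mathrm{T}M)$. The implication ``naturally reductive $\Rightarrow$ projectively equivalent'' is then immediate: with $S$ satisfying \eqref{AST3}, we have $\tilde{\nabla}_X X=\nabla_X X+S(X)X=\nabla_X X$, so $\nabla$ and $\tilde{\nabla}$ share all geodesics with their original parametrization, hence are \emph{a fortiori} projectively equivalent.

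For the opposite implication, I first observe that the pregeodesics of $\tilde{\nabla}$ depend only on its symmetric part, which is the torsion-free connection
\[
\tilde{\nabla}^{s}_X Y:=\nabla_X Y+\tfrac{1}{2}\bigl(S(X)Y+S(Y)X\bigr).
\]
Projective equivalence of $\tilde{\nabla}$ with $\nabla$ is therefore equivalent, by Weyl's classical characterization for torsion-free connections, to the existence of a $1$-form $\phi$ on $M$ with
\[
\tfrac{1}{2}\bigl(S(X)Y+S(Y)X\bigr)=\phi(X)Y+\phi(Y)X.
\]
Setting $Y=X$ yields $S(X)X=2\phi(X)X$. The metric-compatibility identity \eqref{eq:AS-met} with $Y=Z=X$ gives $g(S(X)X,X)=0$, so $2\phi(X)g(X,X)=0$ for every $X$; this forces $\phi\equiv 0$ and hence \eqref{AST3}. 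The preceding theorem then delivers natural reducibility of $M$.

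The principal conceptual point is the correct interpretation of ``projective equivalence'' when one of the connections carries torsion; once this is pinned down via the passage to the symmetric part $\tilde{\nabla}^{s}$, the remaining computation is a one-line polarization combined with the metric-compatibility of $S$. I do not anticipate any serious obstacle beyond invoking Weyl's projective theorem in the torsion-free setting.
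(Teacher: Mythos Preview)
The paper does not prove this theorem; it is stated with the citation \cite{TV} and no argument is given. There is therefore nothing in the paper to compare your proposal against.

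On its own merits your argument is sound. The passage from $\tilde{\nabla}$ to its symmetric part $\tilde{\nabla}^{s}$ is the correct way to invoke Weyl's projective criterion in the presence of torsion, since pregeodesics are insensitive to the skew part. From $S(X)X=2\phi(X)X$ and the metric condition \eqref{eq:AS-met} (which gives $g(S(X)X,X)=0$) you correctly obtain $\phi\equiv 0$, hence $S(X)X=0$, and the preceding Tricerri--Vanhecke characterization finishes the proof. The forward direction is immediate as you say. This is indeed the standard route to the result and matches in spirit the argument in \cite{TV}.
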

Tricceri and Vanhecke \cite{TV} classified 
homogeneous Riemannian $3$-manifolds 
admitting homogeneous Riemannian structure of type $\mathcal{T}_3$. 
On the other hand, Kowalski and Tricceri \cite{KoTr} 
classified 
homogeneous Riemannian $3$-manifolds 
admitting homogeneous Riemannian structure of type $\mathcal{T}_2$.

\subsection{}
Olmos and S{\'a}mchez \cite{OS} introduced the 
notion of ``canonical connection" in 
the following manner:
\begin{definition}{\rm 
Let $(M,g)$ be a Riemannian manifold with 
Levi-Civita connection. A metric linear connection 
$\nabla^{\mathsf c}$ is said to be a 
\emph{canonical connection} in the sense of Olmos-S{\'a}nchez 
if it satisfies $\nabla^{\mathsf c}S=0$, where 
$S=\nabla^{\mathsf c}-\nabla$.
}
\end{definition}
Obviously, any Ambrose-Singer connections are 
canonical connections in the sense of Olmos-S{\'a}mchez. 
Under this definition, they proved the following result.
\begin{theorem}[\cite{OS}]
Let $M$ be linearly full 
immersed submanifold of Euclidean space 
$\mathbb{E}^n$ with vector valued second fundamental form 
$\mathrm{I\!I}$. Then the following properties are 
mutually equivalent{\rm:}
\begin{enumerate}
\item $M$ admits a canonical connection 
in the sense of Olmos-S{\'a}nchez satisfying 
$\nabla^{\mathsf c}\mathrm{I\!I}=0$.
\item $M$ is an (extrinsically) homogeneous submanifold 
with constant principal curvatures.
\item $M$ is an orbit of an $s$-representation. Namely $M$ is a 
standardly embedded $R$-spaces.
\end{enumerate} 
\end{theorem}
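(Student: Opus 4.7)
The plan is to separate the three-way equivalence into the classical geometric link (2) $\Longleftrightarrow$ (3) and the connection-theoretic characterization (1) $\Longleftrightarrow$ (2), which is the genuine content contributed by Olmos and S{\'a}nchez. For (3) $\Longrightarrow$ (2), an orbit of an $s$-representation is by definition the orbit of the isotropy action of a Riemannian symmetric pair, so it is extrinsically homogeneous, and standard submanifold theory (such orbits are isoparametric) yields constant principal curvatures. For the converse (2) $\Longrightarrow$ (3) I would invoke the classification results of Dadok, Heintze--Olmos and Thorbergsson identifying extrinsically homogeneous Euclidean submanifolds with constant principal curvatures as standardly embedded $R$-spaces.

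The easier direction (2) $\Longrightarrow$ (1) goes as follows. Writing $M = G/H$ with $G$ a connected group of Euclidean isometries acting effectively and transitively on $M$, I would choose a reductive decomposition $\mathfrak{g} = \mathfrak{h} \oplus \mathfrak{m}$ and form the canonical connection $\nabla^{\mathsf c}$ of the coset representation described in Section~\ref{sec:2}. This connection is metric and satisfies $\nabla^{\mathsf c} S = 0$ automatically. Since every $G$-equivariant tensor field is $\nabla^{\mathsf c}$-parallel, and $\mathrm{I\!I}$ is $G$-equivariant because the action of $G$ extends to rigid motions of the ambient $\mathbb{E}^n$, we obtain $\nabla^{\mathsf c} \mathrm{I\!I} = 0$.

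The main work lies in (1) $\Longrightarrow$ (2). Assume $\nabla^{\mathsf c}$ is a canonical connection in the sense of Olmos--S{\'a}nchez with $\nabla^{\mathsf c} \mathrm{I\!I} = 0$. I would first upgrade $\nabla^{\mathsf c}$ to a genuine Ambrose--Singer connection by showing $\nabla^{\mathsf c} R = 0$: the Gauss equation expresses $R$ algebraically in terms of $\mathrm{I\!I}$ and the induced metric, so parallelism of $\mathrm{I\!I}$ and $g$ forces parallelism of $R$. The intrinsic Ambrose--Singer theorem then produces a transvection algebra $\mathfrak{g} = \mathfrak{h} \oplus \mathfrak{m}$ integrating to a Lie group which acts transitively on $M$ by intrinsic isometries. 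The crucial extrinsic step is to show that this action is realized by rigid motions of $\mathbb{E}^n$: using $\nabla^{\mathsf c}\mathrm{I\!I} = 0$, the Codazzi equation, and the resulting parallelism of the normal connection and of the shape operators, one shows that $\nabla^{\mathsf c}$-parallel transport along any curve admits a compatible lift of the normal frame and extends to an orthogonal transformation of $\mathbb{E}^n$ preserving $M$. Constant principal curvatures then follow immediately from $\nabla^{\mathsf c}$-parallelism of the Weingarten operators along the orbits of $G$.

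The main obstacle will be this lifting of parallel transport to the ambient space. One must verify carefully that the tangential transvections produced by Ambrose--Singer integrate consistently with the normal bundle data, so that they assemble into genuine Euclidean motions rather than merely intrinsic isometries of $(M,g)$. The hypothesis $\nabla^{\mathsf c}\mathrm{I\!I} = 0$, rather than just the intrinsic conditions $\nabla^{\mathsf c} g = 0$ and $\nabla^{\mathsf c} S = 0$, is precisely the ingredient that makes this extrinsic lift possible, and the condition that $M$ be \emph{linearly full} is needed to rule out degenerate cases where the ambient group would fail to act effectively on the affine hull.
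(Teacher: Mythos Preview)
The paper does not prove this theorem. It is stated with the citation \cite{OS} and no proof follows; the result is quoted purely as background to motivate the notion of canonical connection in the sense of Olmos--S{\'a}nchez, and the paper immediately moves on to the next subsection. There is therefore no proof in the paper against which to compare your proposal.

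As a sketch of the Olmos--S{\'a}nchez argument itself your outline is broadly reasonable, but one remark is in order. For the implication (2) $\Longrightarrow$ (3) you appeal to ``classification results of Dadok, Heintze--Olmos and Thorbergsson''; be aware that the theorem you are trying to prove \emph{is} the Olmos--S{\'a}nchez result, and the hard direction (2) $\Longrightarrow$ (3) in this generality is essentially the content of their paper (building on Thorbergsson's result for isoparametric submanifolds of high codimension and extending it via normal holonomy). Invoking it as an outside classification risks circularity. The genuinely novel input in \cite{OS} is the normal holonomy theorem and its use to reduce the homogeneous constant-principal-curvature case to the isoparametric one, which your sketch does not mention.
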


\subsection{Canonical connections on Lie groups}\label{sec:1.6new}
We can regard a Lie group $G$ as a homogeneous space in two ways: 
$G=G/\{\mathsf{e}\}$ and $G=(G\times G)/\Delta G$. 
Here $\mathsf{e}$ is the unit element of $G$. 
In the first representation, the isotropy algebra is 
$\{0\}$ and the tangent space $\mathfrak{m}$ at $\mathsf{e}$ is identified with
$\mathfrak{g}$. 
Obviously the splitting $\mathfrak{g}=\{0\}+
\mathfrak{g}$ is reductive. The natural projection 
$\pi:G\to G/\{\mathsf e\}$ is the identity map.
The canonical connection of $G/\{\mathsf{e}\}$ is 
denoted by $\nabla^{(-)}$ and given by 
\[
\nabla^{(-)}_{X}Y=0,\quad X,Y\in\mathfrak{g}.
\]
The torsion $T^{(-)}$ of $\nabla^{(-)}$ is given by 
$T^{(-)}(X,Y)=-[X,Y]$. The canonical connection 
$\nabla^{(-)}$ is also called the \emph{Cartan-Schouten's $(-)$-connection}.

Next, let us take the product Lie group $G\times G$. 
The Lie algebra of $G\times G$ is 
\[
\mathfrak{g}\oplus\mathfrak{g}=\{(X,Y)\>|\>X,Y\in\mathfrak{g}\}
\]
with Lie bracket
\[
[(X_1,Y_1),(X_2,Y_2)]=([X_1,Y_1],[X_2,Y_2]).
\]
The product Lie group $G\times G$ 
acts on $G$ by the action:
\begin{equation}\label{eq:biinvariantaction}
(G\times G)\times G\to G;\quad (a,b)x=axb^{-1}.
\end{equation}
The isotropy subgroup at the identity $\mathsf{e}$ is the 
\emph{diagonal subgroup} 
\[
\Delta G=\{(a,a)\>|\>a\in G\} 
\]
with Lie algebra $\Delta\mathfrak{g}=\{(X,X)\>|\>X\in\mathfrak{g}\}$. 
We can consider the following three 
Lie subspaces;
\[
\mathfrak{m}^{+}=\{(0,X)\>|\>X\in\mathfrak{g}\},
\quad 
\mathfrak{m}^{-}=\{(X,0)\>|\>X\in\mathfrak{g}\},
\quad 
\mathfrak{m}^{0}=\{(X,-X)\>|\>X\in\mathfrak{g}\}.
\]
Then $\mathfrak{g}\oplus\mathfrak{g}=\Delta\mathfrak{g}\oplus \mathfrak{m}^{+}$,
$\mathfrak{g}\oplus\mathfrak{g}=\Delta\mathfrak{g}\oplus \mathfrak{m}^{-}$ 
and 
$\mathfrak{g}\oplus\mathfrak{g}=\Delta\mathfrak{g}\oplus \mathfrak{m}^{0}$ are reductive.

The corresponding splittings are given 
explicitly by
\begin{align*}
&(X,Y)=(X,X)+(0,-X+Y)\in \Delta\mathfrak{g}\oplus \mathfrak{m}^{+}
\\
&(X,Y)=(Y,Y)+(X-Y,0)\in \Delta\mathfrak{g}\oplus \mathfrak{m}^{-}
\\
&(X,Y)=\left(\frac{X+Y}{2},\frac{X+Y}{2}\right)+
\left(\frac{X-Y}{2},-\frac{X-Y}{2}\right)\in \Delta\mathfrak{g}\oplus \mathfrak{m}^{0}.
\end{align*}
Let us identify the tangent space 
$\mathrm{T}_{\mathsf{e}}G$ of $G$ at $\mathsf{e}$ with these 
Lie subspaces. Then the canonical connection with respect to the 
reductive decomposition 
$\mathfrak{g}\oplus\mathfrak{g}=\Delta\mathfrak{g}\oplus \mathfrak{m}^{+}$
is denoted by $\nabla^{(+)}$ and given by
\[
\nabla^{(+)}_{X}Y=[X,Y],\quad X,Y\in\mathfrak{g}.
\]
The torsion $T^{(+)}$ of $\nabla^{(+)}$ is given by 
$T^{(+)}(X,Y)=[X,Y]$. 
The canonical connection 
$\nabla^{(+)}$ is called the \emph{Cartan-Schouten's $(+)$-connection} 
or \emph{anti canonical connection} \cite{DITAM}.

Next, the 
canonical connection with respect to the 
reductive decomposition 
$\mathfrak{g}\oplus\mathfrak{g}=\Delta\mathfrak{g}\oplus \mathfrak{m}^{-}$ is 
$\nabla^{(-)}$. Finally  
the canonical connection with respect to the 
reductive decomposition 
$\mathfrak{g}\oplus\mathfrak{g}=\Delta\mathfrak{g}\oplus \mathfrak{m}^{0}$
is denoted by $\nabla^{(0)}$ and given by
\[
\nabla^{(0)}_{X}Y=\frac{1}{2}[X,Y],\quad X,Y\in\mathfrak{g}.
\]
The connection $\nabla^{(0)}$ is torsion free and called 
the \emph{Cartan-Schouten's $(0)$-connection}, 
the \emph{natural torsion free connection} \cite{Nomizu} or  
\emph{neutral connection} \cite{DITAM}.

\subsection{The homogeneous Riemannian structures on the 
hyperbolic plane}
Let us realize the hyperbolic plane 
$\mathbb{H}^2(-\alpha^2)$ of constant curvature $-\alpha^2<0$~($\alpha>0$)~
as the upper half plane 
\[
\{z=x+yi\in\mathbb{C}\>|\>y>0\}
\]
equipped with the Poincar{\'e} metric 
\[
\frac{\mathrm{d}x^2+\mathrm{d}y^2}{\alpha^2y^2}.
\]
The special linear group $\mathrm{SL}_2\mathbb{R}$ acts 
isometrically and transitively on $\mathbb{H}^2(-\alpha^2)$ via the 
linear fractional action:
\[
\left(
\begin{array}{cc}
a & b\\
c & d
\end{array}
\right)\cdot z=\frac{az+b}{cz+d}.
\]
The isotropy subgroup at $i$ is the rotation group 
$\mathrm{SO}(2)$. The natural projection 
$\pi:\mathrm{SL}_2\mathbb{R}\to\mathbb{H}^2(-\alpha^2)$ is 
given explicitly by
\[
\pi\left(\>\left(
\begin{array}{cc}
a & b\\
c & d
\end{array}
\right)\>\right)=\frac{(ac+bd)+i}{c^2+d^2}.
\]
The Iwasawa decomposition 
of $\mathrm{SL}_2\mathbb{R}$ is 
carried out as 
$\mathrm{SL}_2\mathbb{R}=NAK$, where

\begin{align*}
N=&
\left\{
\left.
\left(
\begin{array}{cc}
1 & x\\
0 & 1
\end{array}
\right)
\>\right|\>
x\in\mathbb{R}
\right\}
\cong (\mathbb{R},+),
\\
A=&
\left\{
\left.
\left(
\begin{array}{cc}
\sqrt{y} & 0\\
0 & 1/\sqrt{y}
\end{array}
\right)
\>\right|\>
y>0
\right\}
\cong \mathrm{SO}^{+}(1,1),
\\
K=&
\left\{
\left.
\left(
\begin{array}{cc}
\cos\phi & \sin\phi\\
-\sin\phi & \cos\phi
\end{array}
\right)
\>\right|\>
\phi\in\mathbb{R}/2\pi\mathbb{Z}
\right\}
= \mathrm{SO}(2),
\end{align*}
We refer $(x,y,\phi)$ as a coordinate system 
of $\mathrm{SL}_2\mathbb{R}$. 
We can take an orthonormal basis 
\[
\bar{e}_0=
\frac{1}{\alpha}
\left(
\begin{array}{rr}
0 & 1\\
-1 & 0
\end{array}
\right),
\quad 
\bar{e}_1=
\frac{1}{\alpha}
\left(
\begin{array}{rr}
0 & 1\\
1 & 0
\end{array}
\right),
\quad 
\bar{e}_2=
\frac{1}{\alpha}
\left(
\begin{array}{rr}
1 & 0\\
0 & -1
\end{array}
\right)
\]
of the Lie algebra $\mathfrak{sl}_2\mathbb{R}$.
The tangent space $\mathrm{T}_{i}\mathbb{H}^2(-\alpha^2)$ 
is identified with the linear subspace $\mathfrak{p}$ spanned by 
$\bar{e}_1$ and $\bar{e}_2$. On the other hand, 
the Lie algebra $\mathfrak{k}$ of the isotropy subgroup $\mathrm{SO}(2)$ is 
spanned by $\bar{e}_0$. 
One can see that $[\mathfrak{k},\mathfrak{p}]
\subset \mathfrak{p}$ and 
$[\mathfrak{p},\mathfrak{p}]
\subset \mathfrak{k}$. Hence 
$\mathbb{H}^2(-\alpha^2)=\mathrm{SL}_2\mathbb{R}
/\mathrm{SO}(2)$ is a Riemannian symmetric space. 

The upper half plane model 
$\mathbb{H}^2(-\alpha^2)$ is identified with 
the solvable Lie group 
\[
\mathcal{S}=NA=
\left\{
\left.
\left(
\begin{array}{cc}
\sqrt{y} & x/\sqrt{y}\\
0 & 1/\sqrt{y}
\end{array}
\right)
\>\right|\>
(x,y)\in\mathbb{H}^2(-\alpha^2)
\right\}.
\]
The Lie algebra $\mathfrak{s}$ of $S$ is 
spanned by $\bar{e}_0+\bar{e}_1$ and $\bar{e}_2$. 
The tangent space $\mathrm{T}_{i}\mathbb{H}^2(-\alpha^2)$ is 
identified with 
the Lie algebra $\mathfrak{s}$ under the isomorphism:
\[
y\frac{\partial}{\partial x}\biggr\vert_{i}\longmapsto
\bar{e}_0+\bar{e}_1,
\quad
y\frac{\partial}{\partial y}\biggr\vert_{i}\longmapsto
\bar{e}_2.
\]
Hence $\mathbb{H}^2(-c^2)$ is regarded as 
a solvable Lie group 
with respect to the multiplication
\[
(x_1,y_1)(x_2,y_2)=
(x_1+y_1x_2,y_1y_2).
\]
It should be remarked that 
$\mathfrak{sl}_2\mathbb{R}$ is decomposed as 
$\mathfrak{sl}_2\mathbb{R}=\mathfrak{k}+\mathfrak{s}$.
However this decomposition is \emph{not} reductive.

\begin{theorem}[\cite{TV}]\label{thm:1.5}
The homogeneous Riemannian structures 
of $\mathbb{H}^2(-\alpha^2)$ are classified as 
follows{\rm:}
\begin{itemize}
\item $S=0:$ The corresponding homogeneous space representation is 
$\mathrm{SL}_2\mathbb{R}/\mathrm{SO}(2)$.
\item $S(X)Y=-g(X,Y)\bar{e}_2+g(\bar{e}_2,Y)X$ for 
$X$, $Y\in\varGamma(\mathrm{T}\mathbb{H}^2(-\alpha^2)):$ 
This homogeneous Riemannian structure is of 
type $\mathcal{T}_1$. 
The corresponding homogeneous space representation is 
$\mathcal{S}/\{\mathsf{e}\}$. The connection 
$\nabla+S$ is the Cartan-Schouten's $(-)$-connection 
of $\mathcal{S}$.
\end{itemize}
\end{theorem}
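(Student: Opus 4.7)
The plan is to exploit the low dimension and constant curvature to turn the Ambrose-Singer system into a single concrete first-order equation for one vector field and then solve it. First I would observe that on a surface every tensor $S$ satisfying the metric identity \eqref{eq:AS-met} is automatically of class $\mathcal{T}_1$: working in an arbitrary local orthonormal frame $\{e_1,e_2\}$, the skew-symmetry in the last two arguments leaves only the two independent components $S_\flat(e_i,e_1,e_2)$, $i=1,2$, and matching these against the $\mathcal{T}_1$ template produces a unique vector field $V$ with
\[
S(X)Y=g(X,Y)V-g(V,Y)X.
\]

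Next I would dispose of two of the three Ambrose-Singer conditions. Since $R(X,Y)Z=-\alpha^{2}\bigl(g(Y,Z)X-g(X,Z)Y\bigr)$ is a polynomial expression in $g$ alone and $\tilde{\nabla}g=0$, the condition $\tilde{\nabla}R=0$ holds automatically. A direct calculation using $\tilde{\nabla}g=0$ gives
\[
(\tilde{\nabla}_{X}S)(Y)Z=g(Y,Z)\,\tilde{\nabla}_{X}V-g(\tilde{\nabla}_{X}V,Z)\,Y,
\]
so $\tilde{\nabla}S=0$ is equivalent to $\tilde{\nabla}V=0$. Unwinding $\tilde{\nabla}_{X}V=\nabla_{X}V+S(X)V$ then reduces this to the first-order equation
\[
\nabla_{X}V=|V|^{2}X-g(X,V)V\qquad(\star)
\]
for the unknown vector field $V$.

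The integrability of $(\star)$ is quickly extracted. Pairing $(\star)$ with $V$ yields $X[|V|^{2}]=2g(\nabla_{X}V,V)=0$, so $c:=|V|^{2}$ is a constant. Iterating $(\star)$ to compute $R(X,Y)V$ and comparing with the model curvature produces $-c\bigl(g(Y,V)X-g(X,V)Y\bigr)=-\alpha^{2}\bigl(g(Y,V)X-g(X,V)Y\bigr)$, which forces either $V\equiv 0$ or $c=\alpha^{2}$. In the former case $S=0$ and the coset representation is the symmetric one $\mathrm{SL}_{2}\mathbb{R}/\mathrm{SO}(2)$. In the latter, a direct check in the upper half-plane coordinates shows that the vector field $-\bar{e}_{2}$, in the normalization fixed earlier in the paper, satisfies $(\star)$; since the isotropy $\mathrm{SO}(2)$ acts transitively on unit tangent vectors, the transitive isometric action of $\mathcal{S}$ supplies any other solution from this one, so uniqueness up to isometry follows.

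For the final identification, note that $\tilde{\nabla}V=0$ implies $\tilde{R}(X,Y)V=0$; since $\tilde{R}(X,Y)$ lies in the one-dimensional algebra $\mathfrak{so}(\mathrm{T}_{p}\mathbb{H}^{2}(-\alpha^{2}))$, this forces $\tilde{R}\equiv 0$ whenever $V\neq 0$. Hence the holonomy algebra is trivial and the Ambrose-Singer bracket formula collapses to $[X,Y]_{\mathfrak{g}}=g(V,Y)X-g(V,X)Y$, which is the two-dimensional non-abelian solvable Lie algebra $\mathfrak{s}$; the reconstructed coset space is therefore $\mathcal{S}/\{\mathsf{e}\}$ and $\tilde{\nabla}$ is the Cartan-Schouten $(-)$-connection on $\mathcal{S}$. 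I expect the main obstacle to be the bookkeeping in this last step: matching the abstract vector field produced by $(\star)$ to the explicit generator $-\bar{e}_{2}$, and verifying that the reconstructed Lie group coincides with $\mathcal{S}$ rather than merely some abstract two-dimensional non-abelian solvable group, requires careful tracking of the normalizations chosen for $\bar{e}_{2}$ and the identifications $\mathfrak{p}\cong\mathrm{T}_{i}\mathbb{H}^{2}(-\alpha^{2})\cong\mathfrak{s}$.
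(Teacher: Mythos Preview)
The paper does not actually prove Theorem~\ref{thm:1.5}: it is quoted from Tricerri--Vanhecke \cite{TV} as background and stated without argument. So there is no ``paper's proof'' against which to compare your attempt.

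That said, your approach is sound and is essentially the standard one. The reduction to a $\mathcal{T}_1$-type tensor is forced in dimension two, the constant-curvature hypothesis kills $\tilde{\nabla}R=0$ for free, and your equation $(\star)$ together with the curvature comparison correctly yields the dichotomy $V=0$ or $|V|^{2}=\alpha^{2}$. The identification of the Lie algebra via $\tilde{R}=0$ and the Ambrose--Singer bracket is also correct.

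One point deserves tightening. Your uniqueness sentence conflates two different group actions: the isotropy $\mathrm{SO}(2)$ rotates directions at a single point, while $\mathcal{S}$ moves points around; neither alone ``supplies any other solution from this one.'' The clean statement is that $(\star)$ is a first-order system, so a solution is determined by its value at one point; since the full isometry group $\mathrm{SL}_{2}\mathbb{R}$ acts transitively on the unit tangent bundle, any two solutions with $|V|=\alpha$ are related by an isometry. You clearly have this in mind, but as written the sentence invoking $\mathcal{S}$ does not parse. The bookkeeping worry you flag at the end is real but minor: once you know the reconstructed algebra is two-dimensional non-abelian, it is unique up to isomorphism, and the metric pins down the identification with $\mathfrak{s}$.
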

The solvable Lie group 
$\mathcal{S}$ is isomorphic to the 
identity component 
$\mathrm{GA}^{+}(1)=\mathrm{GL}^{+}_{1}
\mathbb{R}\ltimes\mathbb{R}$ of the 
affine transformation group 
$\mathrm{GA}(1)=\mathrm{GL}_{1}
\mathbb{R}\ltimes\mathbb{R}$ of the real line. 
Indeed, 
\[
\left(
\begin{array}{cc}
\sqrt{y} & x/\sqrt{y}\\
0 & 1/\sqrt{y}
\end{array}
\right)\longmapsto 
\left(
\begin{array}{cc}
y & x\\
0 &1
\end{array}
\right)
\]
gives the Lie group isomorphism 
from $\mathcal{S}$ onto 
$\mathrm{GA}^{+}(1)$. 
The orthonormal basis 
$\{\bar{e}_1,\bar{e}_2\}$
of $\mathfrak{s}$ corresponds 
to the orthonormal basis
\[
\bar{e}_1=\left(
\begin{array}{cc}
0 & \alpha\\
0 & 0
\end{array}
\right),
\quad 
\bar{e}_2=\left(
\begin{array}{cc}
\alpha & 0\\
0 & 0
\end{array}
\right)
\]
of the Lie algebra $\mathfrak{ga}(1)$. 
\begin{remark}\label{rem:H2}
{\rm The only homogeneous Riemannian structure 
of the $2$-sphere $\mathbb{S}^2(\alpha^2)$ is 
the trivial one ($S=0$). The hyperbolic plane 
$\mathbb{H}^2(-\alpha^2)$ has non trivial homogeneous Riemannian 
structure of type $\mathcal{T}_1$. This difference 
of $\mathbb{S}^2(\alpha^2)$ and $\mathbb{H}^2(-\alpha^2)$ reflects 
on the diference of homogeneous structures of 
$\mathbb{S}^2(\alpha^2)\times\mathbb{R}$ and 
$\mathbb{H}^2(-\alpha^2)\times\mathbb{R}$ (see \cite{Ohno}). 
Moreover the homogeneous structure on $\mathbb{H}^2(-\alpha^2)$ 
of type $\mathcal{T}_1$ induces a homogeneous Riemannian structure 
on the Sasakian space form $\mathscr{M}^{3}(-3-\alpha^2)\times\mathbb{R}
=\mathbb{H}^2(-\alpha^2)\times\mathbb{R}$ (see Example \ref{eg:hyperbolicSasakian}). 
}
\end{remark}
\section{Moving frames}
For later use, here we collect fundamental equations 
of moving frames on Riemannian $3$-manifolds.
\subsection{Connection forms}
Let $(M,g)$ be a Riemannian $3$-manifold. Take a local orthonormal frame field 
$\mathcal{E}=\{e_1,e_2,e_3\}$. Denote by
$\Theta=(\theta^1,\theta^2,\theta^3)$ 
the orthonormal coframe field 
metrically dual to $\{e_1,e_2,e_3\}$. 
We regard $\Theta$ as a vector valued $1$-form 
\[
\Theta=\left(
\begin{array}{c}
\theta^1\\
\theta^2\\
\theta^3
\end{array}
\right).
\]
Since the 
Levi-Civita connection is torsion free, 
the following 
\emph{first structure equation}
\[
\mathrm{d}\Theta+\omega\wedge \Theta=0.
\]
holds.
The $\mathfrak{so}(3)$-valued $1$-form  
\[
\mathbf{\omega}=
\left(
\begin{array}{ccc}
0 & \omega_{2}^{\>\,1} & \omega_{3}^{\>\,1}
\\
-\omega_{2}^{\>\,1} & 0 & \omega_{3}^{\>\,2}\\
- \omega_{3}^{\>\,1} & -\omega_{3}^{\>\,2} & 0 
\end{array}
\right)
\]
determined by the first
structure equation is called the 
\emph{connection form}.  
A component $\omega_{j}^{\>i}$ of $\omega$ is 
called a connection $1$-form. 
The first structure equation is 
the differential system:
\[
\mathrm{d}\theta^{i}+\sum_{j=1}^{3}\omega_{j}^{\>\,i}\wedge \theta^{j}=0.
\]
The connection coefficients 
$\{\varGamma_{jk}^{\,\>i}\}$ of 
the Levi-Civita connection $\nabla$ is 
relative to $\mathcal{E}$ is defined by
\[
\nabla_{e_i}e_{j}=\sum_{k=1}^{3}\varGamma_{ij}^{\,\,k}e_k.
\]
Then the connection $1$-forms are related to  
connection coefficients by
\[
\omega_{j}^{\>k}=
\sum_{\ell=1}^{3}\varGamma_{\ell j}^{\,\,k}\,\theta^{\ell}.
\]
Hence we obtain
\begin{equation}\label{eq:LC-omega}
g(\nabla_{X}e_i,e_j)=
\omega_{i}^{\>j}(X).
\end{equation}
Thus 
\[
\omega_{i}^{\>\>j}=-\omega_{j}^{\>\>i}.
\]
\begin{remark}
{\rm Tricerri and Vanhecke \cite{TV} used the convention:
\[
g(\nabla_{X}e_i,e_j)=
\omega_{ij}(X).
\]
}
\end{remark}
\subsection{Curvature forms}
Next, 
the $\mathfrak{so}(3)$-valued 
$2$-form $\varOmega=(\varOmega_{j}^{\>\,i})$ defined by
\[
\varOmega=\mathrm{d}\omega+\omega\wedge\omega
\]
is called the 
\emph{curvature form} relative to $\Theta$. 
This formula is called the 
\emph{second structure equation}. 
The components $\varOmega_{j}^{\>\>i}$ are called 
curvature $2$-forms. The second structure equation is the 
differential system:
\[
\varOmega_{j}^{\>\,i}=
\mathrm{d}\omega_{j}^{\>\,i}+\sum_{k=1}^{3}
\omega_{k}^{\>\,i}\wedge
\omega_{j}^{\>\,k}.
\]
One can see 
that 
\[
R(X,Y)e_{i}=2\sum_{i=1}^{3}\varOmega_{i}^{\>\>j}(X,Y)e_{j}.
\]
If we express the Riemannian curvature $R$ as
\[
R(e_k,e_{\ell})e_i=\sum_{j=1}^{3}
R_{ik\ell}^{\>\>j}\,e_{j},
\]
and set
\[
R_{ijk\ell}
=g(R(e_k,e_{\ell})e_i,e_j)=R_{ik\ell}^{\>\>j},
\quad \varOmega_{ij}:=\varOmega_{i}^{\>\>j},
\]
then we obtain
\[
\varOmega_{ij}
=\frac{1}{2}
\sum_{k,\ell=1}^{3} R_{ijk\ell}\,\theta^k\wedge\theta^{\ell}.
\]

\section{Contact manifolds}\label{sec:3}
As is well known contact structures play important roles in 
$3$-dimensional topology and geometry (see \textit{e.g.} \cite{Eliashberg, GeigesBook}). 
In this section we collect fundamental 
facts on contact structures in 
$3$-dimensional Riemannian geometry.
\subsection{}
Let $M$ be a $(2n+1)$-manifold. 
A $1$-form $\eta$ is called a 
\emph{contact form} on $M$ if 
$\eta\wedge (\mathrm{d}\eta)^{n}\not=0$. 
A $(2n+1)$-manifold
$M$ together with a contact form is called a 
\emph{contact manifold}. 
The distribution $\mathcal{D}$ defined by
\[
\mathcal{D}=\left \{ X \in \mathrm{T}M \ \vert \
\eta(X)=0
\right \}
\]
is called the 
\emph{contact structure} 
(or \emph{contact distribution})
determined by $\eta$. We denote  the 
vector subbundle of $\mathrm{T}M$ determined by 
$\mathcal{D}$ by the same letter $\mathcal{D}$:
\[
\mathcal{D}:=\bigcup_{p\in M}\mathcal{D}_{p},
\quad 
\mathcal{D}_{p}=\left \{ X_p \in \mathrm{T}_{p}M \ \vert \
\eta_{p}(X_{p})=0
\right \}.
\]
On a contact manifold $(M,\eta)$, 
there exists a unique vector
field $\xi$ such that
\[
\eta (\xi)=1,
\quad 
\mathrm{d} \eta (\xi,\cdot)=0.
\]
Namely $\xi$ is transversal to the contact structure $\mathcal{D}$. 
This vector field $\xi$ is called the \emph{Reeb vector field} 
(or \emph{characteriztic vector field}) of $(M,\eta)$.
\begin{definition}{\rm
Two contact manifolds 
$(M,\eta)$ and 
$(M^{\prime},\eta^{\prime})$ are said 
to be \emph{contactmorphic} if there exists 
a diffeomorphism (called a \emph{contactmorphism}) $f:M\to M^{\prime}$ which preserves the 
contact distribution, \textit{i.e.}, 
there exists a non-vanishing smooth function $\lambda$ 
on $M$ such that $f^{*}\eta^{\prime}=\lambda\,\eta$. 
When $M^{\prime}=M$, a contactmorphism $f$ is also called a 
\emph{contact transformation}. In particular, a contact transformation  
$f$ satisfying $f^{*}\eta=\eta$ is called a 
\emph{strict contact transformation}.
}
\end{definition}
On a contact manifold $(M,\eta)$, 
there exists an endomorphism field $\varphi$ and a
Riemannian metric $g$ on a contact manifold $(M,\eta)$ such that
\begin{equation}\label{almostcontact}
\varphi^2=-I+\eta \otimes \xi, \quad 
\eta(\xi)=1,
\end{equation}
\begin{equation}\label{associatedmetric}
g(\varphi X,\varphi Y)=g(X,Y)-\eta(X)\eta(Y),
\quad 
g(\xi,\cdot)=\eta,
\end{equation}
\begin{equation}\label{contactmetric}
\mathrm{d}\eta(X,Y)=g(X,\varphi Y)
\end{equation}
for all vector fields $X,\ Y$ on $M$. 
Here we use the convention
\[
\mathrm{d}\eta(X,Y)=\frac{1}{2}
\left(
X(\eta(Y))-Y(\eta(X))-\eta([X,Y])
\right).
\]
The pair $(\varphi,g)$ (or
quartet $(\eta,\xi,\varphi,g)\, $) is called the 
\emph{associated
almost contact Riemannian structure} of $(M,\eta)$. 
A contact manifold $(M,\eta)$ together with
an associated contact Riemannian structure 
is called a \emph{contact Riemannian manifold}
(or \emph{contact metric manifold}) 
and denote it by $M=(M,\eta,\xi,\varphi,g)$.

\begin{definition}
{\rm
A diffeomorphism $f:(M,\eta,\xi,\varphi, g)
\to (M^{\prime},\eta^{\prime},\xi^{\prime},\varphi^{\prime},g^{\prime})$ 
between contact Riemannian manifolds 
is called an \emph{isomorphism} if it preserves all structure tensors,
\textit{i.e}.,
\[
\mathrm{d}f\circ \varphi=\varphi^{\prime}\circ \mathrm{d}f,\ 
\quad 
\mathrm{d}f(\xi)=\xi^{\prime},
\quad 
f^{*}\eta^{\prime}=\eta,
\quad 
f^{*}g^{\prime}=g.
\]
}
\end{definition}
An isomorphism from a 
contact Riemannian manifold 
$(M,\eta,\xi,\varphi, g)$ to itself 
is called an \emph{automorphism}.
The set of all automorphisms 
$\mathrm{Aut}(M)$ is a subgroup 
of the isometry group $\mathrm{Iso}(M)$. 
Tanno showed the following fundamental fact.

\begin{proposition}[\cite{TannoDiff}]
\label{Tannoholomorphic}
If a diffeomorphism $f:M\to M^{\prime}$ between 
contact Riemannian manifolds is $\varphi$-holomorphic,
i.e., $\mathrm{d}f\circ \varphi=\varphi^{\prime}\circ \mathrm{d}f$, then
there exists a positive constant $a$ such that
\[
f^{*}\eta^{\prime}=a\eta,
\quad 
\mathrm{d}f(\xi)=a\xi^{\prime},
\quad 
f^{*}g^{\prime}=ag+a(a-1)\eta \otimes \eta.
\]
\end{proposition}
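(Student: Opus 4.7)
The plan is to introduce a scalar function $h$ that measures how $f$ scales the Reeb field and the contact form, and then show that $h$ must in fact be a positive constant; the metric identity will then follow by bilinear decomposition.

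First I would exploit the $\varphi$-holomorphic assumption on the one-dimensional kernels. From \eqref{almostcontact} we have $\ker\varphi=\mathbb{R}\xi$ and $\ker\varphi'=\mathbb{R}\xi'$, and $\mathrm{d}f\circ\varphi=\varphi'\circ\mathrm{d}f$ forces $\mathrm{d}f(\xi)=h\,\xi'\circ f$ for some smooth function $h$ on $M$. Likewise, since $\mathcal{D}=\mathrm{image}\,\varphi$, the same relation gives $\mathrm{d}f(\mathcal{D})=\mathcal{D}'$, so the pullback $f^{*}\eta'$ annihilates $\mathcal{D}$; hence $f^{*}\eta'=\tilde{h}\,\eta$ for some function $\tilde{h}$. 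Evaluating at $\xi$ shows $\tilde{h}=\eta'(\mathrm{d}f(\xi))=h$, so the two scalings coincide.

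The crux is to show $h$ is constant. Differentiating $f^{*}\eta'=h\eta$ yields
\[
f^{*}\mathrm{d}\eta'=\mathrm{d}h\wedge\eta+h\,\mathrm{d}\eta.
\]
Testing both sides on a pair $(\xi,Y)$ with $Y\in\mathcal{D}$: the left side vanishes because $\mathrm{d}f(\xi)=h\xi'$ and $\mathrm{d}\eta'(\xi',\cdot)=0$, and the term $h\,\mathrm{d}\eta(\xi,Y)$ vanishes because $\mathrm{d}\eta(\xi,\cdot)=0$; consequently $(\mathrm{d}h\wedge\eta)(\xi,Y)=0$, which forces $\mathrm{d}h(Y)=0$ for every $Y\in\mathcal{D}$. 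Therefore $\mathrm{d}h=\mathrm{d}h(\xi)\,\eta$, so $\mathrm{d}h\wedge\eta=0$ and the displayed equation reduces to $f^{*}\mathrm{d}\eta'=h\,\mathrm{d}\eta$. Taking $\mathrm{d}$ once more gives $0=\mathrm{d}h\wedge\mathrm{d}\eta=\mathrm{d}h(\xi)\,\eta\wedge\mathrm{d}\eta$. Since $\eta\wedge(\mathrm{d}\eta)^{n}\neq 0$ (in particular $\eta\wedge\mathrm{d}\eta\neq 0$), we conclude $\mathrm{d}h(\xi)=0$, hence $\mathrm{d}h\equiv 0$ and $h$ is a constant $a$. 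This two-step differentiation is the main obstacle, and the rank of $\mathrm{d}\eta$ provided by the contact condition is exactly what unlocks it.

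With $a$ now constant, positivity and the metric formula are immediate. Applying \eqref{contactmetric} on both sides of $f^{*}\mathrm{d}\eta'=a\,\mathrm{d}\eta$ and using the $\varphi$-holomorphy gives $f^{*}g'(X,\varphi Y)=a\,g(X,\varphi Y)$ for $X,Y\in\mathcal{D}$; since $\varphi$ is an isomorphism of $\mathcal{D}$ with $\varphi^{2}=-I$ on $\mathcal{D}$, this yields $f^{*}g'(X,Z)=a\,g(X,Z)$ on $\mathcal{D}\times\mathcal{D}$. Choosing any $0\neq X\in\mathcal{D}$ shows $a=f^{*}g'(X,X)/g(X,X)>0$. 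Finally, using $\mathrm{d}f(\xi)=a\xi'$ we compute $f^{*}g'(\xi,\xi)=a^{2}$ and $f^{*}g'(X,\xi)=0=a\,g(X,\xi)$ for $X\in\mathcal{D}$; writing an arbitrary vector as $X=(X-\eta(X)\xi)+\eta(X)\xi$ and expanding bilinearly gives
\[
f^{*}g'(X,Y)=a\,g(X,Y)+a(a-1)\,\eta(X)\eta(Y),
\]
which completes the proof.
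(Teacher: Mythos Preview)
The paper does not supply its own proof of this proposition; it is quoted from Tanno \cite{TannoDiff} and stated without argument, so there is nothing in the paper to compare against line by line.

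Your proof is correct and is essentially the classical argument. The only point worth a remark is the step ``in particular $\eta\wedge\mathrm{d}\eta\neq 0$'': in dimension $2n+1$ with $n>1$ this is not the contact condition itself, but it does follow from it, since $\eta\wedge\mathrm{d}\eta=0$ at a point would force $\eta\wedge(\mathrm{d}\eta)^{n}=(\eta\wedge\mathrm{d}\eta)\wedge(\mathrm{d}\eta)^{n-1}=0$ there. With that justification made explicit, every step goes through: the kernel/image description of $\xi$ and $\mathcal{D}$ via $\varphi$ gives $f^{*}\eta'=h\eta$ and $\mathrm{d}f(\xi)=h\xi'$ with the same $h$; the two differentiations of $f^{*}\eta'=h\eta$ together with $\mathrm{d}\eta(\xi,\cdot)=0$ and the nondegeneracy of $\eta\wedge\mathrm{d}\eta$ force $h$ to be constant; positivity comes from $f^{*}g'(X,X)=a\,g(X,X)$ on $\mathcal{D}$; and the metric formula follows from the orthogonal splitting $\mathrm{T}M=\mathcal{D}\oplus\mathbb{R}\xi$ exactly as you wrote.
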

Tanno \cite{Ta0} showed that $\dim\mathrm{Aut}(M)\leq (n+1)^2$.
In particular $\dim\mathrm{Aut}(M)\leq 4$ for contact 
Riemannian $3$-manifolds.
\subsection{}
Next, on the direct product $M \times \mathbb{R}(t)$ of a contact Riemannian manifold 
$M$ and the real line $\mathbb{R}(t)$, we can extend
naturally the endomorphism field $\varphi$ to an almost complex
structure $J$ on $M\times \mathbb{R}(t)$:
\[
J\left(X,f\frac{\partial}{\partial t}\right)=
\left(\varphi X-f\xi, \eta(X)\frac{\partial}{\partial t}\right),\
X \in 
\varGamma(\mathrm{T}M),\>\> f \in
C^{\infty}(M \times \mathbb{R}).
\]
If the almost complex structure $J$ on $M \times \mathbb{R}$ is
integrable then $(M,\eta)$ is said to be \emph{normal}. The
normality is equivalent to the vanishing of the 
\emph{Sasaki-Hatakeyama torsion}
$\mathcal{N}$:
\[
\mathcal{N}(X,Y)=[\varphi,\varphi](X,Y)
+2\mathrm{d} \eta(X,Y).
\]
Here $[\varphi,\varphi]$ is the 
\emph{Nijenhuis torsion} of
$\varphi$:
\[
[\varphi,\varphi](X,Y)=
\varphi^2[X,Y]+[\varphi X,\varphi Y]
-\varphi[\varphi X,Y]
-\varphi[X,\varphi Y],\quad 
X,Y \in \varGamma(\mathrm{T}M).
\]
The product metric $g+\mathrm{d}t^2$ on $M \times \mathbb{R}$ is $J$-Hermitian. 
When $M$ is normal, then $(M\times\mathbb{R},J,g+\mathrm{d}t^2)$ is a locally 
conformal K{\"a}hler manifold. Topological properties 
of normal (almost) contact Riemannian $3$-manifolds, we refer to 
\cite{Geiges}.

The normality is reinterpreted as the integrability 
of the almost complex structure $J$ on 
the product manifold $\mathbb{R}^{+}\times M$ defined by
\[
JX=r\eta(X)\frac{\partial}{\partial r}+\varphi X,
\quad 
J\frac{\partial}{\partial r}=-\frac{1}{r}\xi,\quad X\in\varGamma(\mathrm{T}M),
\]
where $r>0$ is the radius coordinate. 
The vector field $r\partial/\partial_{r}$ is called the 
\emph{Liouville vector field}.

A normal contact Riemannian manifold
$(M,\eta,\xi,\varphi,g)$ is called a 
\emph{Sasakian manifold}
(or \emph{Sasaki manifold}). 
One can see that a Rimannian manifold $M$ is 
Sasakian if and only if its cone $C(M)=\mathbb{R}^{+}\times M$ equipped with 
the cone metric $\mathrm{d}r^2+r^2\,g$ is a K{\"a}hler manifold \cite{BG}. 

On a contact Riemannian manifold $M$, we introduce an 
endomorphism field $h$ by 
$h=(\pounds_{\xi}\varphi)/2$. Here $\pounds_{\xi}$ is the 
Lie differentiation by $\xi$. 
When $M$ is $3$-dimensional, we know the 
following characterization of normality.

\begin{proposition}
Let $(M,\eta,\xi,\varphi,g)$ be a contact Riemannian $3$-manifold.
Then the following five conditions are 
mutually equivalent.
\begin{itemize}
\item $M$ is a Sasakian manifold,
\item $\xi$ is a Killing vector field,
\item $\nabla\xi=-\varphi$,
\item $(\nabla_{X}\varphi)Y=g(X,Y)\xi-\eta(Y)X$
for any vector fields $X$ and $Y$ on $M$.
\item $h=0$ on $M$.
\end{itemize}
\end{proposition}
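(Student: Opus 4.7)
My plan is to establish the five equivalences by a chain of implications that exploits two standard identities holding on any contact Riemannian manifold: the skew-symmetry $g(\varphi X,Y)+g(X,\varphi Y)=0$ (immediate from $\mathrm{d}\eta(X,Y)=g(X,\varphi Y)$ and the antisymmetry of $\mathrm{d}\eta$) and the general formula $\nabla_X\xi=-\varphi X-\varphi h X$, which follows from differentiating $\eta(\xi)=1$, $g(\xi,\xi)=1$ and using $\mathrm{d}\eta(X,Y)=g(X,\varphi Y)$ together with the definition of $h=(\pounds_\xi\varphi)/2$. Once these are in hand, most of the equivalences become short algebraic manipulations valid in any odd dimension; only the implication $h=0\Rightarrow$ Sasakian requires the $3$-dimensional hypothesis.

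The loop I would run is (i)$\Rightarrow$(iv)$\Rightarrow$(iii)$\Rightarrow$(ii)$\Rightarrow$(v)$\Rightarrow$(i). For (i)$\Rightarrow$(iv) I would invoke the standard fact (available in any dimension) that on a Sasakian manifold the defining identity $(\nabla_X\varphi)Y=g(X,Y)\xi-\eta(Y)X$ holds; this is a consequence of the vanishing of $\mathcal N$ combined with the contact metric identities \eqref{almostcontact}--\eqref{contactmetric}. For (iv)$\Rightarrow$(iii), set $Y=\xi$: since $\varphi\xi=0$ one gets $-\varphi\nabla_X\xi=\eta(X)\xi-X$; applying $\varphi$ and using $\varphi^{2}=-I+\eta\otimes\xi$ together with $\eta(\nabla_X\xi)=\tfrac12 X(g(\xi,\xi))=0$ yields $\nabla_X\xi=-\varphi X$. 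For (iii)$\Rightarrow$(ii), the Killing condition reduces to $(\pounds_\xi g)(X,Y)=g(\nabla_X\xi,Y)+g(X,\nabla_Y\xi)=-g(\varphi X,Y)-g(X,\varphi Y)=0$ by the skew-symmetry noted above.

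The equivalence (ii)$\Leftrightarrow$(v) is the standard Blair-type lemma: plugging the general formula $\nabla_X\xi=-\varphi X-\varphi hX$ into the Killing condition gives $g(\varphi hX,Y)=g(\varphi hY,X)$ for all $X,Y$, which combined with the basic tensorial identities $h\xi=0$, $h\varphi+\varphi h=0$ and the symmetry of $h$ (all of which I would derive from $h=\tfrac12\pounds_\xi\varphi$) forces $h=0$. Conversely $h=0$ obviously gives $\nabla_X\xi=-\varphi X$ and hence $\xi$ Killing; this simultaneously closes (v)$\Rightarrow$(iii) as a bonus.

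The genuinely three-dimensional step, which I expect to be the main obstacle, is (v)$\Rightarrow$(i). The idea is to show that $h=0$ implies vanishing of the Sasaki--Hatakeyama torsion $\mathcal N$. I would pick a local orthonormal $\varphi$-frame $\{e_1,e_2=\varphi e_1,\xi\}$ so that the contact distribution is spanned by $\{e_1,\varphi e_1\}$, and express $[\varphi,\varphi]$ in this frame. Using $h=0$ together with $\nabla\xi=-\varphi$ obtained from (v)$\Rightarrow$(iii), a direct computation evaluating $\mathcal N$ on the pairs $(e_1,\varphi e_1)$, $(\xi,e_1)$, $(\xi,\varphi e_1)$ produces only terms that cancel; the cancellation is what fails in higher dimensions and reflects that in dimension three the contact distribution is of complex rank one, so the Nijenhuis tensor of $\varphi$ has essentially a single scalar component which is controlled by $h$. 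Once $\mathcal N=0$ is verified, normality, and hence the Sasakian property, follows from the definition, closing the loop.
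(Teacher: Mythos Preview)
The paper does not actually supply a proof of this proposition; it is stated as a known fact and the reader is referred to Blair's monographs \cite{B,B2,BG} at the end of the section. So there is no ``paper's own proof'' to compare against, and your outline must be judged on its own merits.

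Your chain (i)$\Rightarrow$(iv)$\Rightarrow$(iii)$\Rightarrow$(ii)$\Rightarrow$(v)$\Rightarrow$(i) is sound. One small slip: in the step (ii)$\Rightarrow$(v) the Killing condition together with $\nabla_X\xi=-\varphi X-\varphi hX$ gives $g(\varphi hX,Y)+g(\varphi hY,X)=0$, i.e.\ $\varphi h$ is \emph{skew}-symmetric, not symmetric as you wrote. The argument still closes, because the identities $h$ symmetric and $h\varphi+\varphi h=0$ force $\varphi h$ to be \emph{symmetric} automatically; combining the two gives $\varphi h=0$, hence $h=0$. So the logic is right but the sign bookkeeping is reversed.

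For the genuinely $3$-dimensional step (v)$\Rightarrow$(i), your plan works, but you can streamline it using an observation the paper itself makes (just after Proposition~\ref{TannoTheorem}): in dimension~$3$ the associated almost CR-structure is automatically integrable, which is equivalent to saying that $\mathcal{N}$ vanishes on $\mathcal{D}\times\mathcal{D}$ without any hypothesis on $h$. Thus only the components $\mathcal{N}(\xi,X)$ with $X\in\varGamma(\mathcal{D})$ need to be checked, and a one-line computation using $\varphi\xi=0$ and the definition $2h=\pounds_\xi\varphi$ gives $\mathcal{N}(\xi,X)=-2\varphi hX$. Hence $h=0$ kills $\mathcal{N}$ immediately. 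This is exactly the ``complex rank one'' phenomenon you alluded to, made explicit.
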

For more details on contact Riemannian manifolds, we refer to 
\cite{B,B2,BG}.

\subsection{The vector product}\label{sec7.1}
Let $(M,g,\mathrm{d}v_g)$ be an oriented Riemannian $3$-manifold 
Take a positively oriented 
local orthonormal frame field $\{e_1,e_2,e_3\}$. Then 
we have $\mathrm{d}v_{g}(e_1,e_2,e_3)=1$. 
Denote by $\{\theta^1,\theta^2,\theta^3\}$ the 
local orthonormal coframe field dual to 
$\{e_1,e_2,e_3\}$. Then 
we have 
\[
\mathrm{d}v_{g}=3!\,\theta^1\wedge\theta^2\wedge\theta^3.
\]
Let us denote by $\mathrm{d}V$ the tensor field of type $(1,2)$ associated 
to $\mathrm{d}v_g$, \textit{i.e.},
\[
\mathrm{d}v_{g}(X,Y,Z)=g(\mathrm{d}V(X,Y),Z).
\]
Equivalently, $\mathrm{d}v_g=(\mathrm{d}V)_{\flat}$.
The tensor field $\mathrm{d}V$ is interpreted as 
the distribution of the \emph{vector product operation} 
(also called the \emph{cross product}) 
$\times$ on each tangent space $\mathrm{T}_{p}M$. 
Indeed, $X\times Y$ is defined by 
\[
g(X\times Y,Z)=\mathrm{d}v_g(X,Y,Z),
\quad 
X,Y,Z \in \mathrm{T}_{p}M.
\]
Now let $(M,\varphi,\xi,\eta,g)$ be a contact Riemannian 
$3$-manifold. Then the volume element $dv_g$ derived from the
associated metric $g$ is related to the contact form $\eta$ by
\begin{equation}\label{volume}
\mathrm{d}v_{g}=-3\eta\wedge \mathrm{d}\eta.
\end{equation}
\begin{remark}{\rm 
In \cite[Theorem 4.6]{B2}, the volume element 
$\mathrm{d}v_g$ is expressed as
\[
\mathrm{d}v_{g}=-\frac{1}{2}\eta\wedge \mathrm{d}\eta.
\]
The convention for the volume element used in \cite{B2} is 
$\mathrm{d}v_g=\theta^1\wedge\theta^2\wedge\theta^3$. 
On the other hand, in the present article, we use the convention 
$\mathrm{d}v_g=3!\,\theta^1\wedge \theta^2\wedge\theta^3$ 
(see \cite[p.~67]{TV}). 
}
\end{remark}
With respect to the volume element 
\eqref{volume}, the vector product $\times$ is computed as
\begin{equation}\label{cross}
X \times Y=\mathrm{d}V(X,Y)=-\mathrm{d}\eta(X,Y)\xi+\eta(X)\varphi{Y}-\eta(Y)\varphi{X}.
\end{equation}
Note that for a unit vector field $X$ orthogonal to $\xi$, the 
local frame field $\{X, \varphi X, \xi\}$
is positively oriented and 
\[
\xi\times X=\varphi X. 
\]

\section{Some linear connections on contact Riemannian manifolds}
\subsection{Generalized Tanaka-Webster connection}
Tanno \cite{Tanno} introduced the following linear connection on 
contact Riemannian manifolds.
\begin{equation}
\nabla^{*}_{X}Y=\nabla_{X}Y
+\eta(X)\varphi{Y}+\{(\nabla_{X}\eta)Y\}\xi-\eta(Y)\nabla_{X}\xi.
\end{equation}
This connection is called the 
\emph{generalized Tanaka-Webster connection}.
The generalized Tanaka-Webster connection satisfies the following properties:
\[
\nabla^{*}\xi=0,\quad \nabla^{*}\eta=0,\quad \nabla^{*}g=0.
\]

\subsection{The associated CR-structure}
On a contact Riemannian manifold $M$ of arbitrary odd dimension, we 
consider a complex vector subbundle $\mathscr{S}$ of 
the complexified tangent bundle $T^{\mathbb C}M$ of $M$ defined by
\[
\mathscr{S}=\{X-\sqrt{-1}\varphi{X} \ \vert \ X\in \varGamma(\mathcal{D})\},
\]
where $\mathcal{D}$ is the contact distribution. 
We call $\mathscr{S}$ the \emph{almost CR-structure} associated to
$(\varphi,\xi,\eta,g)$. 
Define
a section $L$ of
$\varGamma(\mathcal{D}^*\otimes \mathcal{D}^*)$ by
\[
L(X,Y)=-\mathrm{d}\eta(X,\varphi{Y}).
\]
Then $L$ is positive definite on $\mathcal{D}\otimes\mathcal{D}$ and called the
\emph{Levi-form} of $M$.

An almost CR-structure is said to be \emph{integrable} if it 
satisfies 
\emph{integrability condition}:
\[
[\varGamma(\mathscr{S}),\varGamma(\mathscr{S})]\subset \varGamma(\mathscr{S}).
\]
When the associated almost CR-structure $\mathscr{S}$ is integrable,
the resulting CR-manifold $(M,\mathscr{S})$ is called a
\emph{strongly pseudo-convex CR-manifold}
or \emph{strongly pseudo-convex pseudo-Hermitian manifold}.
Note that the CR-integrability is equivalent to the vanishing 
of Sasaki-Hatakeyama torsion on the contact distribution. 
Thus the normality is stronger than CR-integrability. 
When $M$ is $3$-dimensional, then $\mathscr{S}$ is automatically integrable.
\begin{proposition}\label{TannoTheorem}
Let $M$ be a contact Riemannian manifold. Then its associated almost
CR-structure is integrable if and only if $\mathcal{Q}=0$. Here
the tensor field $\mathcal{Q}$ is defined by
\begin{equation}\label{Tanno-tensor}
\mathcal{Q}(Y,X):=(\nabla_{X}\varphi)Y+(\nabla_{X}\eta)(\varphi Y)\xi
+\eta(Y)\varphi(\nabla_{X}\xi).
\end{equation}
\end{proposition}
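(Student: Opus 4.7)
The plan is to unpack the integrability of $\mathscr{S}$ as a real tensor identity on the contact distribution, translate it into covariant-derivative form, and recognise the resulting identity as the vanishing of $\mathcal{Q}$. A complex vector field $W = A + iB$ lies in $\varGamma(\mathscr{S})$ precisely when $A \in \varGamma(\mathcal{D})$ and $B = -\varphi A$; so expanding
\[
[X - i\varphi X,\, Y - i\varphi Y] = \bigl\{[X, Y] - [\varphi X, \varphi Y]\bigr\} - i\bigl\{[\varphi X, Y] + [X, \varphi Y]\bigr\}
\]
for $X, Y \in \varGamma(\mathcal{D})$, the integrability $[\varGamma(\mathscr{S}), \varGamma(\mathscr{S})] \subset \varGamma(\mathscr{S})$ breaks into two real conditions. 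Using $\mathrm{d}\eta(U, V) = g(U, \varphi V)$ together with the skew-symmetry $g(U, \varphi V) + g(\varphi U, V) = 0$, the $\eta$-component reduces to $g(X, \varphi Y) + g(\varphi X, Y) = 0$ and is automatic, so the substantive requirement becomes
\[
\varphi[X, Y] - \varphi[\varphi X, \varphi Y] = [\varphi X, Y] + [X, \varphi Y]
\]
for all $X, Y \in \varGamma(\mathcal{D})$; call this identity $(\star)$.

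Next I would translate $(\star)$ into Levi-Civita form using torsion-freeness $[U, V] = \nabla_U V - \nabla_V U$, the Leibniz rule $\nabla_U(\varphi V) = (\nabla_U\varphi)V + \varphi \nabla_U V$, and $\varphi^2 = -I + \eta \otimes \xi$. After cancellations, $(\star)$ should reorganise into an identity of the form
\[
(\nabla_X\varphi)Y - (\nabla_Y\varphi)X + \varphi(\nabla_{\varphi X}\varphi)Y - \varphi(\nabla_{\varphi Y}\varphi)X + \bigl\{\eta(\nabla_{\varphi X}Y) - \eta(\nabla_{\varphi Y}X)\bigr\}\xi = 0.
\]
To extract the non-symmetric conclusion $\mathcal{Q}(Y, X) = 0$ from this visibly antisymmetric identity, I would also apply $(\star)$ with $Y$ replaced by $\varphi Y$ (using $\varphi^2 Y = -Y$ on $\mathcal{D}$), combine the two instances, and use the second-order relation
\[
(\nabla_U\varphi)\varphi + \varphi(\nabla_U\varphi) = (\nabla_U\eta) \otimes \xi + \eta \otimes \nabla_U\xi,
\]
obtained by covariantly differentiating $\varphi^2 = -I + \eta \otimes \xi$, to separate symmetric and antisymmetric parts and conclude $(\nabla_X\varphi)Y + (\nabla_X\eta)(\varphi Y)\xi = 0$ for all $X, Y \in \varGamma(\mathcal{D})$.

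Finally I would verify the $\xi$-components of $\mathcal{Q}$ directly. Using $\nabla_\xi\xi = 0$ (a Koszul-formula consequence of $\mathrm{d}\eta(\xi, \cdot) = 0$) and the standard identity $\nabla_X \xi = -\varphi X - \varphi h X$ with $h = \tfrac{1}{2}\pounds_\xi\varphi$, one computes
\[
\mathcal{Q}(\xi, X) = (\nabla_X\varphi)\xi + \varphi\nabla_X\xi = -\varphi\nabla_X\xi + \varphi\nabla_X\xi = 0,
\]
and, using the anticommutation $\varphi h + h\varphi = 0$, a short calculation gives $(\nabla_\xi\varphi)Y = 0$ and hence $\mathcal{Q}(Y, \xi) = 0$. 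Combined with the previous step and the tensoriality of $\mathcal{Q}$ in both slots, this yields $\mathcal{Q} \equiv 0$ on $\varGamma(\mathrm{T}M) \times \varGamma(\mathrm{T}M)$. The converse $\mathcal{Q} = 0 \Rightarrow (\star)$ is a direct substitution back into the expansion above. The main obstacle is the middle step: organising the many $\varphi$- and $\eta$-cancellations so that $(\star)$ reorganises cleanly, and then using the $Y \mapsto \varphi Y$ substitution together with the $\varphi^2$-differentiation identity to separate symmetric from antisymmetric parts and reach the asymmetric conclusion $\mathcal{Q}(Y, X) = 0$ rather than merely a symmetrised version.
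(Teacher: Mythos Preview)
The paper does not actually prove this proposition; it is stated as a known result due to Tanno and only the reformulation $\mathcal{Q}(Y,X)=(\nabla_X\varphi)Y-g((I+h)X,Y)\xi+\eta(Y)(I+h)X$ is recorded afterwards. So there is no paper proof to compare with, and your proposal must stand on its own.

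Your setup is correct: the integrability of $\mathscr{S}$ is equivalent to your condition $(\star)$ on $\mathcal{D}$, and your verifications of $\mathcal{Q}(\xi,X)=0$ and $\mathcal{Q}(Y,\xi)=0$ are fine (indeed $\nabla_\xi\varphi=0$ always holds in a contact metric manifold). The covariant rewriting of $(\star)$ that you display is also correct.

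The gap is precisely where you suspect it. Your plan for the middle step is to apply $(\star)$ again with $Y$ replaced by $\varphi Y$ and then combine. But for $X,Y\in\varGamma(\mathcal{D})$ one has $\varphi^2 Y=-Y$, and a short check shows that $(\star)$ with $Y\mapsto\varphi Y$ is nothing but $\varphi$ applied to $(\star)$; since both sides of $(\star)$ already lie in $\mathcal{D}$ (you verified the $\eta$-components vanish), applying $\varphi$ is injective there, so the substituted identity carries \emph{no} new information. Likewise, the relation obtained from differentiating $\varphi^2=-I+\eta\otimes\xi$ only tells you $P(X,\varphi Y)=-\varphi P(X,Y)$ for the $\mathcal{D}$-part $P$ of $(\nabla_X\varphi)Y$, which again does not combine with the antisymmetric identity to force $P=0$. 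So the step ``separate symmetric from antisymmetric parts and reach the asymmetric conclusion'' cannot be completed along the line you describe.

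What is missing is an input that genuinely links $\nabla\varphi$ to the normality tensor in the \emph{first} slot. The standard route (Tanno; see also Blair's monograph) is the Koszul-formula identity valid on any contact metric manifold,
\[
2g\bigl((\nabla_X\varphi)Y,Z\bigr)=g\bigl(\mathcal{N}(Y,Z),\varphi X\bigr)+2\,\mathrm{d}\eta(\varphi Y,X)\,\eta(Z)-2\,\mathrm{d}\eta(\varphi Z,X)\,\eta(Y).
\]
For $Y,Z\in\varGamma(\mathcal{D})$ this reduces to $2g((\nabla_X\varphi)Y,Z)=g(\mathcal{N}(Y,Z),\varphi X)$, and since one easily checks $\eta(\mathcal{N}(Y,Z))=0$ on $\mathcal{D}$, it gives directly that $\mathcal{N}|_{\mathcal{D}\times\mathcal{D}}=0$ is equivalent to $(\nabla_X\varphi)Y\in\mathbb{R}\xi$ for $Y\in\varGamma(\mathcal{D})$, i.e.\ to $\mathcal{Q}|_{\mathcal{D}\times\mathcal{D}}=0$. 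Together with your (correct) treatment of the $\xi$-slots, this closes the argument. If you want to avoid quoting that formula, you effectively have to rederive it from Koszul; the bracket manipulations of $(\star)$ alone are not enough.
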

The tensor field
$\mathcal{Q}$ is called the \emph{Tanno tensor field}. Tanno 
tensor field is computed as
\[
\mathcal{Q}(Y,X)=(\nabla_{X}\varphi)Y
-g((I+h)X,Y)\xi
+\eta(Y)(I+h)X.
\]
Thus on a strongly pseudo-convex CR-manifolds, 
the following formula holds:
\begin{equation}\label{Tanno-formula}
(\nabla_{X}\varphi)Y=g((I+h)X,Y)\xi-\eta(Y)(I+h)X
\end{equation}
for all vector fields $X$ and $Y$.
The formula
\eqref{Tanno-formula} implies
\[
\nabla_{X}\xi=-\varphi(I+h)X, \quad
X \in \varGamma(\mathrm{T}M).
\]
According to Tanaka \cite{Tanaka}, 
a strongly pseudo-convex CR manifold is said to be 
\emph{normal} if its Reeb vector field is 
\emph{analytic}, that is, $[\xi,\varGamma(\mathscr{S})]\subset\varGamma(\mathscr{S})$.
The Reeb vector field is analytic if and only if $\xi$ is an infinitesimal 
contact transformation and $[X,\varphi Y]=\varphi[X,Y]$ for all 
$X$, $Y\in\varGamma(\mathcal{D})$. One can see that a strongly pseudo-convex CR-manifold 
is normal if and only if its underlying contact Riemannian structure is Sasakian.

\subsection{}
In the study of strongly pseudo-convex CR-manifolds,
the linear connection $\hat{\nabla}$ introduced by Tanaka and Webster
is highly useful:
\begin{equation}\label{Tanaka-Webster}
\hat{\nabla}_{X}Y=\nabla_{X}Y+\eta(X)\varphi{Y}
+\{(\nabla_{X}\eta)Y\}\xi-\eta(Y)\nabla_{X}\xi.
\end{equation}
Here $\nabla$ is the Levi-Civita connection of the
associated metric. The linear connection $\hat{\nabla}$
is referred as to the \emph{Tanaka-Webster connection} 
\cite{Tanaka,Web}. 
The Tanaka-Webster connection is rewritten as
\[
\hat{\nabla}_{X}Y=\nabla_{X}Y+\eta(X)\varphi{Y}
+\eta(Y)\varphi(I+h)X
-g(\varphi(I+h)X,Y)\xi.
\]
It should be remarked that the Tanaka-Webster connection has 
\textit{non-vanishing} torsion $\hat{T}$:
\[
\hat{T}(X,Y)=2g(X,\varphi{Y})\xi
+\eta(Y)\varphi{h}X-\eta(X)\varphi{h}Y.
\]
With respect to the Tanaka-Webster connection, 
all structure tensor fields
$(\varphi,\xi,\eta,g)$ are parallel,
\textit{i.e}.,
\[
\hat{\nabla}\varphi=0, \quad
\hat{\nabla}\xi=0, \quad
\hat{\nabla}\eta=0, \quad
\hat{\nabla}g=0.
\]
On a strongly pseudo-convex CR-manifold $M$, 
the generalized Tanaka-Webster connection $\nabla^{*}$ 
in the sense of Tanno coincides with the Tanaka-Webster connection 
$\hat{\nabla}$.

\section{Homogeneous contact Riemannian structures}\label{sec:5}
\subsection{}
Let $M$ be a contact Riemannian manifold.
Then $M$ is said to be a \emph{homogeneous contact Riemannian manifold}
if there exists a Lie group of isometries which preserves
 the contact form and acts transitively on $M$. 
Note that Proposition \ref{Tannoholomorphic} implies 
that every $\eta$-preserving isometry is an isomorphism.

Having in mind Sekigawa-Kri{\v c}henko theorem \cite{Sekigawa,Krichenko}
 (see also 
\cite{CalLo}) and Proposition \ref{Tannoholomorphic}, 
the notion of homogeneous contact Riemannian 
structure is defined as follows:
\begin{definition}{\rm 
Let $(M,\eta,\xi,\varphi,g)$ be an contact Riemannian manifold.
A \emph{homogeneous contact Riemannian structure} is a homogeneous 
Riemannian structure $S$ which satisfies the additional condition 
$\tilde{\nabla}\varphi=0$.
}
\end{definition}

Chinea and Gonzalez obtained the 
following fundamental result.

\begin{lemma}[\cite{CG}]
Let $S$ be a homogeneous contact Riemannian 
structure on an contact Riemannian manifold $M$.
Then the Ambrose-Singer connection $\tilde{\nabla}=\nabla+S$ satisfies
\[
\tilde{\nabla}\eta=0,\quad 
\tilde{\nabla}\xi=0.
\]
\end{lemma}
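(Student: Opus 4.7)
The plan is to deduce the two parallelism identities from the defining properties $\tilde{\nabla}g = 0$ and $\tilde{\nabla}\varphi = 0$ of a homogeneous contact Riemannian structure, together with the almost-contact identities \eqref{almostcontact} and \eqref{associatedmetric}.

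First I would establish $\tilde{\nabla}\xi = 0$. The key algebraic observation is that $\varphi\xi = 0$, which follows from \eqref{almostcontact} by computing $\varphi^{2}\xi = -\xi + \eta(\xi)\xi = 0$ together with the fact that $\varphi$ has trivial kernel on $\mathcal{D}$. Applying $\tilde{\nabla}_X$ to the identity $\varphi\xi = 0$ and using $\tilde{\nabla}\varphi = 0$ yields
\[
0 = \tilde{\nabla}_X(\varphi\xi) = (\tilde{\nabla}_X\varphi)\xi + \varphi(\tilde{\nabla}_X\xi) = \varphi(\tilde{\nabla}_X\xi),
\]
so $\tilde{\nabla}_X\xi \in \ker\varphi$. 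A short computation from \eqref{almostcontact} shows $\ker\varphi = \mathbb{R}\xi$, hence $\tilde{\nabla}_X\xi = \lambda(X)\,\xi$ for some $1$-form $\lambda$. To pin down $\lambda$, I would differentiate the normalization $g(\xi,\xi) = 1$ (which follows from \eqref{associatedmetric} and $\eta(\xi)=1$) using $\tilde{\nabla}g = 0$:
\[
0 = X(g(\xi,\xi)) = 2g(\tilde{\nabla}_X\xi,\xi) = 2\lambda(X).
\]
Thus $\lambda \equiv 0$, proving $\tilde{\nabla}\xi = 0$.

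For $\tilde{\nabla}\eta = 0$ I would use the metric-dual relation $\eta(Y) = g(\xi,Y)$ from \eqref{associatedmetric}. A direct computation with $\tilde{\nabla}g = 0$ and the previously established $\tilde{\nabla}\xi = 0$ gives
\[
(\tilde{\nabla}_X\eta)(Y) = X(g(\xi,Y)) - g(\xi,\tilde{\nabla}_X Y) = g(\tilde{\nabla}_X\xi,Y) = 0.
\]

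I do not expect any serious obstacle here; the argument is a direct algebraic consequence of $\tilde{\nabla}\varphi = 0$ and $\tilde{\nabla}g = 0$, exploiting only the almost-contact compatibility \eqref{almostcontact}--\eqref{associatedmetric}. The only subtle point is identifying $\ker\varphi$ with $\mathbb{R}\xi$, which is immediate from $\varphi^{2} = -I + \eta\otimes\xi$. Notably, the argument does not use the contact condition $\mathrm{d}\eta(X,Y) = g(X,\varphi Y)$ nor the normality of the structure, so the lemma in fact holds for any almost contact metric manifold endowed with a $\varphi$-parallel metric Ambrose--Singer connection.
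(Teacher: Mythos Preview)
Your argument is correct. Note, however, that the paper does not supply its own proof of this lemma: it is quoted from the reference \cite{CG} (Chinea--Gonzalez) without proof, so there is no in-paper argument to compare against. Your derivation is the standard one and is essentially what appears in the literature on almost contact homogeneous structures.

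One cosmetic remark on your justification of $\varphi\xi = 0$: the phrase ``$\varphi$ has trivial kernel on $\mathcal{D}$'' is slightly misdirected. The cleaner route from \eqref{almostcontact} alone is to observe that $\varphi X = 0$ forces $0 = \varphi^{2}X = -X + \eta(X)\xi$, so $\ker\varphi \subset \mathbb{R}\xi$; hence $\varphi\xi = c\,\xi$ for some scalar $c$, and then $0 = \varphi^{2}\xi = c^{2}\xi$ gives $c = 0$. Alternatively, one line with \eqref{associatedmetric} gives $g(\varphi\xi,\varphi\xi) = g(\xi,\xi) - \eta(\xi)^{2} = 0$. Either way the conclusion stands, and the rest of your proof goes through unchanged. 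Your closing observation---that the argument uses only $\tilde{\nabla}g = 0$, $\tilde{\nabla}\varphi = 0$, and the almost contact metric axioms, hence applies beyond the contact metric setting---is correct and worth recording.
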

As a direct consequence of this Lemma, we have: 
\begin{theorem}[\cite{CG}]
Let $(M,\eta,\xi,\varphi,g)$ be a homogeneous 
contact Riemannian manifold.
Then there exists a homogeneous contact Riemannian 
structure $S$ on $M$.
Conversely, let $M$ be a simply connected and complete 
contact Riemannian manifold with a
homogeneous contact Riemannian structure $S$ 
then $M$ is a homogeneous contact Riemannian manifold.
\end{theorem}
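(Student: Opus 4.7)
The plan is to treat the two implications separately and rely throughout on the standard correspondence between $G$-invariance of tensor fields and parallelism with respect to the canonical connection of a reductive coset representation.

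For the direct implication, suppose $M=G/H$ with $G$ a connected Lie group of $\eta$-preserving isometries acting transitively on $M$. By Proposition \ref{Tannoholomorphic}, an isometry that preserves $\eta$ must also satisfy $\mathrm{d}f\circ\varphi=\varphi\circ\mathrm{d}f$ and $\mathrm{d}f(\xi)=\xi$, so every element of $G$ is an automorphism of the whole contact Riemannian structure. Appealing to Proposition~1.1 I take a reductive decomposition $\mathfrak{g}=\mathfrak{h}\oplus\mathfrak{m}$, form the canonical connection $\tilde\nabla$, and set $S=\tilde\nabla-\nabla$. The tensors $g,R,S$ are automatically $\tilde\nabla$-parallel by the general construction recalled in Section~\ref{sec:2}. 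Since $\varphi$ is $G$-invariant and $\tilde\nabla$ is the canonical connection of the given $G$-action, $\varphi$ is parallel with respect to $\tilde\nabla$ (any $G$-invariant tensor field on a reductive homogeneous space is parallel under the canonical connection, because its value at $o$ is $\mathrm{Ad}(H)$-invariant and parallel transport along curves $\tau_{\exp(tX)}(o)$ is by the differential of $\tau_{\exp(tX)}$, which preserves $\varphi$). Therefore $S$ is a homogeneous contact Riemannian structure.

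For the converse, assume $M$ is simply connected and complete and carries a homogeneous contact Riemannian structure $S$. By the Ambrose--Singer Theorem recalled in Section~\ref{sec:2}, $(M,g)$ is a homogeneous Riemannian space; the reconstruction described there produces a connected Lie group $G$, acting effectively and transitively by isometries, together with a reductive splitting for which $\tilde\nabla=\nabla+S$ is precisely the canonical connection. By the Chinea--Gonz\'alez Lemma quoted just above the theorem, $\tilde\nabla\eta=0$ and $\tilde\nabla\xi=0$, and by hypothesis $\tilde\nabla\varphi=0$. Thus all the structure tensors $g,\eta,\xi,\varphi$ are $\tilde\nabla$-parallel.

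The last step is to promote this parallelism into $G$-invariance. In the Ambrose--Singer construction the action of one-parameter subgroups $\{\exp(tX)\}$ with $X\in\mathfrak{m}$ is exactly $\tilde\nabla$-affine translation along the $\tilde\nabla$-geodesic with initial velocity $X^{*}_{o}$, while the action of the isotropy $H$ at $o$ is by elements of the holonomy group of $\tilde\nabla$. Hence every $\tilde\nabla$-parallel tensor field is automatically $G$-invariant. Applying this to $\eta$ (and $\xi,\varphi,g$) shows that $G$ acts by strict contact Riemannian automorphisms, so $M$ is a homogeneous contact Riemannian manifold. The only delicate point in the whole argument is the last one: one must check that the $G$-action built from $\tilde\nabla$ via Ambrose--Singer is actually $\tilde\nabla$-affine, and therefore preserves every $\tilde\nabla$-parallel tensor; this is the standard property of Ambrose--Singer reconstructions, and once it is in hand both implications close immediately.
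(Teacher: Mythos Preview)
Your argument is correct and is precisely the standard route; note that the paper itself does not spell out a proof but simply records the theorem as ``a direct consequence'' of the Chinea--Gonz\'alez Lemma and cites \cite{CG}. Your write-up supplies the details the paper omits: for the forward direction you use that $\eta$-preserving isometries are automorphisms (as the paper notes at the start of Section~\ref{sec:5}) so that $\varphi$ is $G$-invariant and hence parallel for the canonical connection; for the converse you invoke the Ambrose--Singer reconstruction and then use the Lemma together with the hypothesis $\tilde\nabla\varphi=0$ to conclude that all structure tensors are $\tilde\nabla$-parallel, hence $G$-invariant. This is exactly the argument the paper is pointing to, so there is nothing to compare.
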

For more information on homogeneous almost contact Riemannian 
structures, we refer to \cite{KW,Fino94,Fino,GC,WT}.

\section{Sasakian $\varphi$-symmetric spaces}\label{sec:6}
\subsection{}
Let $M$ be a Sasakian manifold. 
Okumura \cite{Ok} introduced the following one-parameter family of linear
connections on $M$:
\[
\nabla^{r}_{X}Y=\nabla_{X}Y+A^{r}(X)Y,
\]
where
\[
A^{r}(X)Y=\mathrm{d}\eta(X,Y)\xi-r\eta(X)\varphi{Y}+
\eta(Y)\varphi{X},\quad 
r \in \mathbb{R}.
\]
Note that the connection $\hat{\nabla}=:\nabla^{r}\vert_{r=-1}$
coincides with 
\emph{Tanaka-Webster connection}.
On the other hand, $\nabla^{r}\vert_{r=0}$ 
was introduced by Sasaki and Hatakeyama \cite{SH}.
The connection $\nabla^{r}\vert_{r=1}$ was introduced by Motomiya \cite{Mot} 
(see also \cite{KM}).
Some authors called $\nabla^r\vert_{r=1}$, the  
\emph{Okumura connection} (\cite{BV2}).

\begin{proposition}[\cite{Ok,Ta2}]
The linear connection $\nabla^{r}$ satisfeis
\[
\nabla^{r}g=0, 
\quad 
\nabla^{r}\varphi=0,
\quad 
\nabla^{r}\eta=0,
\quad 
\nabla^{r}\xi=0,
\quad 
\nabla^{r}A^{r}=0.
\]
\end{proposition}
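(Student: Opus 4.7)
The plan is to verify each of the five parallelism statements directly, exploiting the Sasakian identities
$\nabla_X \xi = -\varphi X$, $(\nabla_X \varphi) Y = g(X,Y)\xi - \eta(Y) X$, and $\mathrm{d}\eta(X,Y) = g(X,\varphi Y)$, together with $\varphi^2 = -I + \eta \otimes \xi$, $\varphi \xi = 0$ and $\eta \circ \varphi = 0$. Note first that, because $A^r(X)Y$ is $C^\infty(M)$-linear in both slots, the tensor field $A^r$ is built algebraically from the tensors $g,\eta,\xi,\varphi$; once we establish that each of these four tensors is $\nabla^r$-parallel, the relation $\nabla^r A^r = 0$ will follow automatically from the fact that $\nabla^r$ commutes with tensorial contractions.

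I would carry out the remaining verifications in the following order. First, I compute $\nabla^r_X \xi$: since $\mathrm{d}\eta(X,\xi) = g(X,\varphi\xi) = 0$ and $\varphi\xi = 0$, the term $A^r(X)\xi$ reduces to $\eta(\xi)\varphi X = \varphi X$, which exactly cancels $\nabla_X \xi = -\varphi X$. Second, to get $\nabla^r g = 0$ it suffices, because $\nabla g = 0$, to check that the endomorphism $A^r(X)$ is skew with respect to $g$; expanding $g(A^r(X)Y,Z) + g(Y,A^r(X)Z)$ one finds the $r$-dependent terms cancel using skew-symmetry of $\varphi$ with respect to $g$, and the remaining four terms split into two cancelling pairs after substituting $\mathrm{d}\eta(X,\cdot) = g(X,\varphi \cdot)$. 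Third, $\nabla^r \eta = 0$ is an immediate consequence of the preceding two steps since $\eta = g(\xi,\cdot)$.

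The main piece of work is $\nabla^r \varphi = 0$. Starting from
\[
(\nabla^r_X \varphi) Y = (\nabla_X \varphi) Y + A^r(X)(\varphi Y) - \varphi\bigl(A^r(X) Y\bigr),
\]
I would substitute the Sasakian expression for $(\nabla_X \varphi) Y$ and expand the two $A^r$-terms, using $\varphi^2 Y = -Y + \eta(Y)\xi$, $\varphi \xi = 0$ and $\eta(\varphi Y) = 0$. The $r$-dependent contributions occur with opposite signs in $A^r(X)\varphi Y$ and $\varphi A^r(X) Y$ and cancel, and the remaining algebraic terms combine to $-g(X,Y)\xi + \eta(Y) X$, exactly the negative of $(\nabla_X \varphi) Y$. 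The hard part is simply bookkeeping: ensuring the $\varphi^2$-expansions are tracked consistently and that the $\eta \otimes \xi$ correction terms pair up correctly. Once this computation is completed, $\nabla^r A^r = 0$ follows from the general observation made in the opening paragraph, completing the proof.
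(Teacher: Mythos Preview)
Your proposal is correct. The paper does not supply its own proof of this proposition: it is stated with citation to Okumura and Takahashi, and the closely related Lemma~\ref{lem:sigma-connection} (which generalizes the first four identities to an arbitrary $1$-form $\sigma$ in place of $-r\eta$) is likewise left as a ``direct computation.'' Your verification is exactly that direct computation, carried out in a sensible order, and the observation that $A^r$ is built tensorially from $g,\eta,\xi,\varphi$ so that $\nabla^r A^r=0$ follows automatically is the standard way to conclude.
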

This Proposition implies that if a Sasakian manifold $M$ satisfies 
$\nabla^{r}R=0$ then it is locally homogeneous Sasakian manifold with 
homogeneous contact Riemannian structure $A^{r}$.

\begin{proposition}
[\textit{cf.}~\cite{Inoguti,Ta2}]
The curvature $R^{r}$ and torsion $T^{r}$ of 
the connection ${\nabla}^{r}$
satisfies the following formulas:
\begin{align*}
T^{r}(X,Y)=&2\mathrm{d}\eta(X,Y)\xi-(r+1)\{\eta(X){\varphi}Y-
\eta(Y){\varphi}X\},\\
{R}^{r}(X,Y)Z=&R(X,Y)Z+\{\eta(Y)g(Z,X)-\eta(X)g(Y,Z)\}\xi\\
&+
\eta(Z)\eta(X)Y-\eta(Y)\eta(Z)X\\
&+
\mathrm{d}\eta(Y,Z){\varphi}X
+\mathrm{d}\eta(Z,X){\varphi}Y-2r\mathrm{d}\eta(X,Y){\varphi}Z,
\\
{R}^{r}(X,Y)\xi=&{R}^{r}(\xi,X)Y=0,\ \
\eta(R^{r}(X,Y)Z)=0,
\\
{R}^{r}({\varphi}X,{\varphi}Y)\varphi{Z}
=&{\varphi}{R}^{r}(X,Y)Z.
\end{align*}
for all $X$,$Y$,$Z \in \varGamma(\mathrm{T}M)$.
\end{proposition}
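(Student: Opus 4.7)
The plan is to prove the proposition in two blocks: first derive the torsion and curvature formulas directly, then read off the secondary identities from either the parallelism properties of $\nabla^{r}$ or from the explicit curvature formula.

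First I would compute $T^{r}$. Since $\nabla$ is torsion-free we have $T^{r}(X,Y) = A^{r}(X)Y - A^{r}(Y)X$, and substituting the definition of $A^{r}$ gives the stated formula almost immediately; the $\mathrm{d}\eta(X,Y)\xi$ terms add (because $\mathrm{d}\eta$ is skew) while the $\eta(Z)\varphi W$ and $-r\eta(W)\varphi Z$ terms combine to produce $-(r+1)\{\eta(X)\varphi Y - \eta(Y)\varphi X\}$.

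Next I would derive the general comparison formula
\[
R^{r}(X,Y)Z = R(X,Y)Z + (\nabla_{X} A^{r})(Y,Z) - (\nabla_{Y} A^{r})(X,Z) + [A^{r}(X), A^{r}(Y)]Z
\]
by expanding $[\nabla^{r}_{X},\nabla^{r}_{Y}]Z - \nabla^{r}_{[X,Y]}Z$ in the usual way, observing that the $A^{r}(\nabla_{X}Y)Z$-type terms cancel against $A^{r}([X,Y])Z$ because $\nabla$ is torsion-free. Then I would compute the two pieces on the right separately, using the Sasakian identities $\nabla_{X}\xi = -\varphi X$, $(\nabla_{X}\eta)(Y) = g(X,\varphi Y)$, and $(\nabla_{X}\varphi)Y = g(X,Y)\xi - \eta(Y)X$, together with $\mathrm{d}\eta(X,Y) = g(X,\varphi Y)$. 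For the commutator term I will repeatedly apply $\varphi^{2} = -I + \eta\otimes\xi$ and $\eta\circ\varphi = 0$ to simplify products like $\eta(\varphi Y)$ and $\varphi^{2}Z$. After collecting terms, the $r$-dependent pieces should organise precisely into the claimed expression, with the $-2r\,\mathrm{d}\eta(X,Y)\varphi Z$ term arising from the $\eta(X)\varphi$-part of $A^{r}(X)$ hitting the $\xi$-part of $A^{r}(Y)Z$ (and vice versa) combined with the cyclic $\mathrm{d}\eta$-terms.

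For the four auxiliary identities I would largely invoke the preceding proposition that $\nabla^{r}\xi = \nabla^{r}\eta = \nabla^{r}\varphi = 0$: parallelism of $\xi$ gives $R^{r}(X,Y)\xi = 0$, parallelism of $\eta$ yields $\eta(R^{r}(X,Y)Z) = 0$, and parallelism of $\varphi$ gives $R^{r}(X,Y)\varphi Z = \varphi R^{r}(X,Y)Z$. The remaining two claims, $R^{r}(\xi,X)Y = 0$ and $R^{r}(\varphi X, \varphi Y)Z = R^{r}(X,Y)Z$ (which combined with the preceding gives $R^{r}(\varphi X, \varphi Y)\varphi Z = \varphi R^{r}(X,Y)Z$), I would read off the explicit curvature formula: setting $X = \xi$ and using $\eta(\xi) = 1$, $\varphi\xi = 0$, and the Sasakian identity $R(\xi,X)Y = \eta(Y)X - g(X,Y)\xi$ causes all terms to telescope; for the $\varphi$-invariance one checks that every term on the right only depends on $X,Y$ through $\eta(X), \eta(Y), \mathrm{d}\eta(X,\cdot), \mathrm{d}\eta(Y,\cdot)$ and $R(X,Y)\cdot$, all of which are known to be invariant under $(X,Y)\mapsto(\varphi X,\varphi Y)$ on a Sasakian manifold (the $\eta$-terms vanish outright, and $R(\varphi X,\varphi Y) = R(X,Y)$ modulo terms killed by $\varphi$, which the correction terms in the formula absorb).

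The main obstacle will be the bookkeeping in the commutator $[A^{r}(X), A^{r}(Y)]Z$: it contains nine bilinear terms, and verifying that they recombine into the clean expression with coefficient $-2r$ on $\mathrm{d}\eta(X,Y)\varphi Z$ (rather than, say, $-(r+1)$ or $-2$) requires careful tracking of the $r$-weighting on each occurrence of $\eta$. I would organise this by splitting $A^{r}(X) = B(X) - r\,\eta(X)\varphi$ where $B(X)Y := \mathrm{d}\eta(X,Y)\xi + \eta(Y)\varphi X$ is $r$-independent, so that the dependence on $r$ is transparent and the $r^{0}$, $r^{1}$, $r^{2}$ contributions can be checked in turn; one should find the $r^{2}$ piece vanishes identically, leaving the stated linear dependence on $r$.
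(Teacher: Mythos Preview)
The paper does not actually prove this proposition: it is stated with the citation ``\textit{cf.}~\cite{Inoguti,Ta2}'' and no argument is supplied. So there is no paper proof to compare against; your proposal is a self-contained verification of a result the authors simply quote.

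Your plan is correct and is the standard route. The torsion computation is immediate as you say, and the curvature comparison formula plus the Sasakian identities you list are exactly what is needed to obtain the explicit expression for $R^{r}$. The splitting $A^{r}(X)=B(X)-r\,\eta(X)\varphi$ is a good device for tracking the $r$-dependence, and you are right that the $r^{2}$ contribution to $[A^{r}(X),A^{r}(Y)]$ vanishes (it equals $r^{2}\eta(X)\eta(Y)[\varphi,\varphi]=0$). The three identities $R^{r}(X,Y)\xi=0$, $\eta\circ R^{r}=0$, and $[R^{r}(X,Y),\varphi]=0$ follow cleanly from $\nabla^{r}\xi=\nabla^{r}\eta=\nabla^{r}\varphi=0$ as you indicate.

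One small caution: your argument for $R^{r}(\varphi X,\varphi Y)\varphi Z=\varphi R^{r}(X,Y)Z$ via ``$R(\varphi X,\varphi Y)=R(X,Y)$ modulo terms killed by $\varphi$'' is vague as written; the Sasakian curvature identity relating $R(\varphi X,\varphi Y)$ to $R(X,Y)$ has several correction terms, and you would need to check they match the extra pieces produced by the explicit $R^{r}$ formula. It is cleaner simply to substitute $(\varphi X,\varphi Y,\varphi Z)$ directly into the proven formula for $R^{r}$, use $\eta\circ\varphi=0$, $\varphi^{2}=-I+\eta\otimes\xi$, $\mathrm{d}\eta(\varphi\cdot,\varphi\cdot)=\mathrm{d}\eta(\cdot,\cdot)$, and the standard Sasakian relation between $R(\varphi X,\varphi Y)\varphi Z$ and $\varphi R(X,Y)Z$, rather than to factor through the intermediate claim $R^{r}(\varphi X,\varphi Y)=R^{r}(X,Y)$.
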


\begin{proposition}[\cite{Ta2}]
On a Sasakian manifold $M$, $\nabla^{r}R^{r}=0$ holds if and only if 
$\nabla^r{R}=0$.
\end{proposition}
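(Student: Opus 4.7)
The plan is to show that $R^{r}-R$ is $\nabla^{r}$-parallel, so that $\nabla^{r}R^{r}=\nabla^{r}R$ as tensor fields, from which the equivalence is immediate.

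First I would invoke the explicit formula for $R^{r}$ from the preceding proposition, which expresses
\[
R^{r}(X,Y)Z-R(X,Y)Z
\]
as an algebraic combination of the tensor fields $g$, $\eta$, $\xi$, $\varphi$ and $\mathrm{d}\eta$, contracted against $X,Y,Z$. Using the compatibility relation $\mathrm{d}\eta(X,Y)=g(X,\varphi Y)$ from \eqref{contactmetric}, every term in the difference $R^{r}-R$ is built entirely from the structure tensors $g,\eta,\xi,\varphi$ of the Sasakian manifold.

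Next I would apply $\nabla^{r}$ to this difference. The previous proposition already guarantees
\[
\nabla^{r}g=0,\quad \nabla^{r}\varphi=0,\quad \nabla^{r}\eta=0,\quad \nabla^{r}\xi=0,
\]
so by the Leibniz rule each summand in $R^{r}-R$ is $\nabla^{r}$-parallel; in particular $\nabla^{r}\mathrm{d}\eta=0$ since $\mathrm{d}\eta(X,Y)=g(X,\varphi Y)$. Therefore $\nabla^{r}(R^{r}-R)=0$, which gives $\nabla^{r}R^{r}=\nabla^{r}R$ identically. The desired equivalence $\nabla^{r}R^{r}=0\iff \nabla^{r}R=0$ then follows at once.

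The only step that requires care is the bookkeeping in the second paragraph: one must verify that applying $\nabla^{r}$ termwise to the long expression for $R^{r}-R$ really produces zero, i.e., that no term involves a derivative of a tensor other than $g,\eta,\xi,\varphi$. Since every building block in the difference falls in this list (after replacing $\mathrm{d}\eta$ by $g(\cdot,\varphi\cdot)$), this is straightforward and forms no genuine obstacle; the proof is essentially a one-line consequence of the parallelism relations $\nabla^{r}g=\nabla^{r}\varphi=\nabla^{r}\eta=\nabla^{r}\xi=0$.
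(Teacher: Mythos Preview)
Your argument is correct: the difference $R^{r}-R$ is an algebraic expression in the $\nabla^{r}$-parallel tensors $g,\eta,\xi,\varphi$ (with $\mathrm{d}\eta$ rewritten via \eqref{contactmetric}), so $\nabla^{r}(R^{r}-R)=0$ and the equivalence follows. The paper itself does not supply a proof of this proposition; it merely cites Takahashi \cite{Ta2}, so there is no in-paper argument to compare against, but your approach is exactly the standard one and matches the spirit of Takahashi's original.
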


\begin{definition}[\cite{BuV89,Ta2}]
{\rm
Let $M$ be a Sasakian manifold.
\begin{enumerate}
\item A geodesic $\gamma(s)$ parametrized by arclength in $M$ is said 
to be a $\varphi$-\emph{geodesic} if $\eta(\gamma^\prime)=0$.
\item A local diffeomorphism $s_{p}$ is said to be a 
$\varphi$-\emph{geodesic symmetry} with base point $p\in M$ if
for each $\varphi$-geodesic $\gamma(s)$ such that $\gamma(0)$ lies in the 
trajectory
of $\xi$ passing through $p$, $s_{p}\gamma(s)=\gamma(-s)$ for each $s$.
\item $M$ is said to be a \emph{locally $\varphi$-symmetric space} if
its $\varphi$-geodesic symmetries are \emph{isometric}.
\end{enumerate}
}
\end{definition}
Since the points of the Reeb flow through $p$ is are fixed
by $s_p$, one can see that $s_p$ is represented as
\[
s_{p}=\mathrm{exp}_{p}\circ (-I_{p}+2\eta_{p}\otimes \xi_{p})\circ 
\mathrm{exp}_{p}^{-1}.
\]
It should be remarked that every $\varphi$-geodesic symmetry 
is a local automorphism on a locally $\varphi$-symmetric space 
\cite{BuV89}. 
The local $\varphi$-symmetry is equivalent to 
CR-symmetry (see \cite{DL}).

Toshio Takahashi characterized locally $\varphi$-symmetric spaces in terms of 
Okumura connection as
follows:
\begin{proposition}[\cite{Ta2}]\label{prop:6.4}
A Sasakian manifold is locally $\varphi$-symmetric if and only if
the curvature tensor ${R}^{r}$ of the Okumura's connection 
$\nabla^{r}$ is 
parallel with respect to Okumura connection,
\textit{i.e.}, ${\nabla}^{r}{R}^{r}=0$.
\end{proposition}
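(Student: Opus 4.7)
The plan is to combine the preceding proposition, which gives $\nabla^{r}R^{r}=0 \iff \nabla^{r}R=0$, with the classical characterization of local $\varphi$-symmetry due to Takahashi. By the preceding proposition, it suffices to prove that a Sasakian manifold $M$ is locally $\varphi$-symmetric if and only if $\nabla^{r}R=0$. The characterization of $\varphi$-symmetry asserts that local $\varphi$-symmetry is equivalent to the horizontal vanishing
\[
g\bigl((\nabla_{W}R)(X,Y)Z, V\bigr) = 0 \qquad \text{for all } W,X,Y,Z,V \perp \xi,
\]
so the remaining task is to bridge this horizontal condition and the full tensorial identity $\nabla^{r}R = 0$.

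For the direction $(\Leftarrow)$, I would start from $\nabla^{r}R=0$, rewritten as $\nabla_{W}R = -A^{r}(W)\cdot R$ where $A^{r}(W)$ acts as a derivation on the $(1,3)$-tensor $R$. Using the explicit formula for $A^{r}$ and the Sasakian identity $R(X,Y)\xi = \eta(Y)X - \eta(X)Y$, one verifies that for $W,X,Y,Z,V$ all in $\mathcal{D}$ the right-hand side pairs trivially with $V$: the $-r\eta(W)\varphi Y$ terms drop out because $\eta(W)=0$, the $\eta(\cdot)\varphi\cdot$ correction terms drop out because the arguments of $R$ are horizontal, and the remaining $\mathrm{d}\eta(W,\cdot)\xi$ contributions lie along $\xi$, killing their inner product with the horizontal $V$. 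Takahashi's criterion then delivers local $\varphi$-symmetry.

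For the direction $(\Rightarrow)$, I would verify $\nabla^{r}R=0$ by decomposing each slot of $R$ according to $TM = \mathcal{D}\oplus\mathbb{R}\xi$. The horizontal-horizontal part reduces, by the same calculation in reverse, to Takahashi's horizontal condition. The remaining cases involve at least one argument or the output equal to $\xi$; these are controlled by the Killing property of $\xi$, which through $\pounds_{\xi}R = 0$ and $\nabla_{X}\xi = -\varphi X$ yields the tensorial identity
\[
(\nabla_{\xi}R)(X,Y)W = R(\varphi X,Y)W + R(X,\varphi Y)W + R(X,Y)\varphi W,
\]
together with the Sasakian formulas for $R(\cdot,\cdot)\xi$ and $R(\xi,\cdot)\cdot$. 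A direct expansion shows that these forced values are exactly cancelled by the corresponding components of $A^{r}(W)\cdot R$.

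The main obstacle is this last step: organising the $\xi$-direction tensorial bookkeeping so that the explicit $A^{r}$-derivation absorbs every non-horizontal piece of $\nabla R$ on a Sasakian manifold. Although no individual calculation is deep, care is required to track the parameter $r$ through the $\varphi$-derivation terms and to exploit the appropriate Bianchi identities so that the cancellation closes consistently for the given $r$.
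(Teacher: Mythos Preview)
The paper does not actually prove this proposition: it is quoted from Takahashi \cite{Ta2} and stated without argument, so there is no in-paper proof to compare against. Your outline is essentially Takahashi's original route---reduce via the preceding proposition to $\nabla^{r}R=0$, and match this against the horizontal condition $g((\nabla_{W}R)(X,Y)Z,V)=0$ for $W,X,Y,Z,V\in\mathcal{D}$, which in \cite{Ta2} is in fact the \emph{definition} of local $\varphi$-symmetry (the equivalence with the $\varphi$-geodesic-symmetry definition used in this paper is the separate result cited from \cite{BuV89,Ta2}).

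Your $(\Leftarrow)$ sketch is correct but slightly underspecified: when you treat $A^{r}(W)\bigl(R(X,Y)Z\bigr)$ for horizontal $W,X,Y,Z$, the vector $R(X,Y)Z$ is not a priori horizontal, so the term $\eta(R(X,Y)Z)\,\varphi W$ survives the formula for $A^{r}$; you need the Sasakian identity $g(R(X,Y)Z,\xi)=-g(R(X,Y)\xi,Z)=0$ (horizontal $X,Y$) to kill it. For $(\Rightarrow)$ you have correctly identified the ingredients---$\pounds_{\xi}R=0$, $\nabla_{X}\xi=-\varphi X$, the explicit $R(\cdot,\cdot)\xi$ formula, and the derivation action of $A^{r}$---and the computation does close; note that the parameter $r$ only enters through $-r\,\eta(W)\varphi(\cdot)$, which acts on $R$ as a $\varphi$-derivation and is cancelled exactly by the $(\nabla_{\xi}R)$-identity you wrote down, so the verification is in fact $r$-independent, consistent with Propositions \ref{prop:6.4} and the one preceding it holding for every $r$.
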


Since the difference tensor $A^{1}$ satisfies 
${A}^{1}(X)X=0$, 
the following interesting result is deduced \cite{BV2}.
\begin{proposition}\label{prop:6.5}
Let $M$ be a 
locally $\varphi$-symmetric space.
Then ${A}^{1}$ defines a locally naturally reductive homogeneous
structure on $M$. 
\end{proposition}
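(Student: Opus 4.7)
The plan is to recognize that Proposition \ref{prop:6.5} essentially combines three facts already assembled earlier in the paper, so the work is to verify one identity and invoke the characterization theorems.

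First I would check that $A^{1}$ is a homogeneous Riemannian structure in the sense of Ambrose--Singer. By the proposition listing the parallelisms of Okumura's connection, $\nabla^{1}g=0$ and $\nabla^{1}A^{1}=0$ hold automatically (put $r=1$). The only remaining condition $\nabla^{1}R=0$ is precisely where the local $\varphi$-symmetry hypothesis enters: Proposition \ref{prop:6.4} gives $\nabla^{1}R^{1}=0$, and the statement immediately preceding it (which asserts $\nabla^{r}R^{r}=0\Longleftrightarrow\nabla^{r}R=0$ for all $r$) then upgrades this to $\nabla^{1}R=0$. At this point $A^{1}$ satisfies all three defining conditions $\tilde{\nabla}g=0$, $\tilde{\nabla}R=0$, $\tilde{\nabla}S=0$ with $\tilde{\nabla}=\nabla+A^{1}$.

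Next I would verify by direct computation that $A^{1}(X)X=0$ for every vector field $X$. From the definition
\[
A^{1}(X)Y=\mathrm{d}\eta(X,Y)\,\xi-\eta(X)\varphi Y+\eta(Y)\varphi X,
\]
substituting $Y=X$ gives $\mathrm{d}\eta(X,X)\,\xi=0$ (since $\mathrm{d}\eta$ is a $2$-form) and the remaining two terms cancel. This is the key algebraic fact and it is immediate; no computation beyond noting antisymmetry of $\mathrm{d}\eta$ is required.

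Having $A^{1}(X)X=0$, I would invoke the Tricerri--Vanhecke theorem (the one characterizing $\mathcal{T}_{3}$ structures via \eqref{AST3}) to conclude that $A^{1}$ is a homogeneous Riemannian structure of type $\mathcal{T}_{3}$, and hence defines a locally naturally reductive homogeneous structure on $M$. The global naturally reductive conclusion then follows on the universal cover by the same theorem, giving the ``locally'' version on $M$ itself.

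I do not anticipate any genuine obstacle here: the only thing to watch is that the cited characterization of natural reductivity by $S(X)X=0$ is stated in the complete simply connected setting, so to say something about a general locally $\varphi$-symmetric $M$ I would phrase the conclusion locally (or pass to the universal cover), which is consistent with the wording ``locally naturally reductive'' in the statement.
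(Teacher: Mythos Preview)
Your proposal is correct and follows essentially the same approach as the paper: the paper's argument is simply the one-line observation that $A^{1}(X)X=0$ (stated just before the proposition), implicitly relying on the preceding propositions to see that $A^{1}$ is a homogeneous Riemannian structure on a locally $\varphi$-symmetric space, and then citing \cite{BV2} for the $\mathcal{T}_{3}$/naturally reductive conclusion. You have simply made each of these steps explicit.
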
 
Here we specialize that $\dim M=3$. Take a 
local orthonormal frame field 
$\{e_1,e_2,e_3\}$ satisfying 
\[
\eta(e_1)=0,\quad e_2=\varphi e_1,\quad e_3=\xi.
\] 
Denote by $\{\theta^1,\theta^2,\theta^3\}$ the dual orthonormal
coframe field to $\{e_1,e_2,e_3\}$, then $A^r$ is locally expressed in 
the following form:
\begin{proposition}\label{prop:6.6}
On a Sasakian $3$-manifold $M$, the linear connection $\nabla^r$ is expressed as $\nabla^r=\nabla+A^r$ with
\[
A^{r}_{\flat}
=-2r\theta^3\otimes(\theta^1\wedge \theta^2)
-2\theta^1\otimes(\theta^2\wedge \theta^3)
-2\theta^2\otimes(\theta^3\wedge \theta^1).
\]
In particular $A^{1}=-\mathrm{d}V$. 
\end{proposition}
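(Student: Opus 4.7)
The plan is to verify the identity by a direct component computation in the adapted orthonormal frame $\{e_1,e_2=\varphi e_1,e_3=\xi\}$, and then to deduce the case $r=1$ from formula \eqref{cross}. First I would lower the index to obtain
\[
A^r_\flat(X,Y,Z)=\mathrm{d}\eta(X,Y)\,\eta(Z)-r\eta(X)g(\varphi Y,Z)+\eta(Y)g(\varphi X,Z),
\]
and record the frame data $\eta=\theta^3$, $\varphi e_1=e_2$, $\varphi e_2=-e_1$, $\varphi\xi=0$, and $\mathrm{d}\eta(e_1,e_2)=g(e_1,\varphi e_2)=-1$ coming from \eqref{contactmetric}. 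Plugging these into the three summands shows that $A^r_\flat(e_i,e_j,e_k)$ vanishes unless $\{i,j,k\}=\{1,2,3\}$, and on the essential cyclic triples takes the values
\[
A^r_\flat(e_1,e_2,e_3)=-1,\qquad A^r_\flat(e_2,e_3,e_1)=-1,\qquad A^r_\flat(e_3,e_1,e_2)=-r,
\]
together with their antisymmetric swaps in the last two slots (which follow either from a direct calculation using the $g$-antisymmetry of $\varphi$, or from the already established metricity $\nabla^r g=0$).

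Second, I would expand the right-hand side under the wedge convention of the paper, namely $(\theta^i\wedge\theta^j)(X,Y)=\tfrac12(\theta^i(X)\theta^j(Y)-\theta^i(Y)\theta^j(X))$, the convention forced by the stated identity $\mathrm{d}v_g=3!\,\theta^1\wedge\theta^2\wedge\theta^3$ together with $\mathrm{d}v_g(e_1,e_2,e_3)=1$. A termwise evaluation of the three summands on the same basis triples yields exactly the six nonzero components listed above, so the two sides agree as $(0,3)$-tensors.

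Finally, for the identity $A^1=-\mathrm{d}V$, I would specialise $r=1$ in the defining formula to obtain
\[
A^1(X)Y=\mathrm{d}\eta(X,Y)\xi-\eta(X)\varphi Y+\eta(Y)\varphi X,
\]
and compare with \eqref{cross}, which reads $X\times Y=-\mathrm{d}\eta(X,Y)\xi+\eta(X)\varphi Y-\eta(Y)\varphi X$. These expressions differ by an overall sign, so $A^1(X)Y=-X\times Y=-\mathrm{d}V(X,Y)$. There is no substantive obstacle; the only care required is consistent bookkeeping of the factors of $\tfrac12$ hidden in $\mathrm{d}\eta$ and in the wedge product, which together account for the coefficient $-2$ appearing in each summand of the target expression.
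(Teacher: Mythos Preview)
Your proposal is correct and follows essentially the same approach as the paper's own proof: both exploit the antisymmetry $A^r_\flat(X,Y,Z)+A^r_\flat(X,Z,Y)=0$ to reduce to the three components $A^r_\flat(X,e_i,e_j)$ with $i<j$, and then evaluate these directly from the definition of $A^r$ in the adapted frame. Your write-up is simply more explicit than the paper's (which leaves the component evaluation to the reader), and your separate verification of $A^1=-\mathrm{d}V$ via the cross-product formula \eqref{cross} is a clean way to handle that clause.
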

\begin{proof}
Since $A^r_{\flat}$ 
satisfies
\[
A^r_{\flat}(X,Y,Z)+A^r_{\flat}(X,Z,Y)=0,
\]
$A^{r}_{\flat}$ is 
expressed as
\[
A^{r}_{\flat}
=2A^{r}_{\flat}(X,e_1,e_2)(\theta^1\wedge \theta^2)
+2A^{r}_{\flat}(X,e_2,e_3)(\theta^2\wedge \theta^3)
+2A^{r}_{\flat}(X,e_3,e_1)(\theta^3\wedge \theta^1).
\]
By the definition of $A^r$, we get the required result.
\end{proof}
 \begin{corollary}\label{cor:6.1}
 Let $M$ be a $3$-dimensional Sasakian $\varphi$-symmetric space, 
 then $A^r$ is a homogeneous Riemannian structure of type 
 $\mathcal{T}_2\oplus\mathcal{T}_3$. Moreover $A^r$ is of type 
 $\mathcal{T}_2$ if and only if $r=-2$.
 \end{corollary}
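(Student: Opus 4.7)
My plan is to verify the Ambrose-Singer conditions for $A^r$ and then read off the Tricerri-Vanhecke type directly from the explicit formula in Proposition \ref{prop:6.6}.

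First, I would note that $A^r$ is a homogeneous Riemannian structure on any locally $\varphi$-symmetric Sasakian manifold. Indeed, $\nabla^r g = 0$ and $\nabla^r A^r = 0$ hold on any Sasakian manifold by the proposition of Okumura-Tanno quoted above, while Proposition \ref{prop:6.4} together with the parallelism equivalence preceding it give $\nabla^r R = 0$ under the $\varphi$-symmetry hypothesis. All three Ambrose-Singer conditions are therefore satisfied.

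Second, from the Tricerri-Vanhecke table, membership in $\mathcal{T}_2\oplus\mathcal{T}_3$ is characterized by $c_{12}(A^r_\flat) = 0$. A one-line calculation directly from the defining formula gives
\[
A^r(e_i)e_i = \mathrm{d}\eta(e_i,e_i)\xi - r\eta(e_i)\varphi e_i + \eta(e_i)\varphi e_i = 0, \qquad i = 1,2,3,
\]
(the first term vanishes by antisymmetry, and in the only nontrivial case $i=3$ the remaining two terms vanish because $\varphi\xi = 0$). Hence $c_{12}(A^r_\flat) = 0$ and $A^r \in \mathcal{T}_2\oplus\mathcal{T}_3$.

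Third, specialization to $\mathcal{T}_2$ requires in addition that the cyclic sum $\mathfrak{S}\, A^r_\flat$ vanish. Since $A^r_\flat$ is skew in its last two slots, the cyclic sum is a fully alternating trilinear form, hence in dimension three it is determined by its value on $(e_1,e_2,e_3)$. Using $\varphi e_1 = e_2$, $\varphi e_2 = -e_1$, $\varphi e_3 = 0$, and $\mathrm{d}\eta(e_1,e_2) = g(e_1,\varphi e_2) = -1$, I compute
\[
A^r_\flat(e_1,e_2,e_3) = g(-\xi,\xi) = -1, \quad A^r_\flat(e_2,e_3,e_1) = g(-e_1,e_1) = -1, \quad A^r_\flat(e_3,e_1,e_2) = g(-r e_2, e_2) = -r,
\]
whose sum equals $-(r+2)$. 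This vanishes precisely when $r = -2$, proving the second claim.

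There is no serious obstacle here; the argument is a direct unpacking of the Tricerri-Vanhecke defining conditions against the explicit formula. The only care needed is with the wedge-product normalization (the paper normalizes so that $\mathrm{d}\eta(X,Y) = \tfrac{1}{2}(X\eta(Y) - Y\eta(X) - \eta([X,Y]))$, giving $(\theta^a\wedge\theta^b)(e_a,e_b) = \tfrac{1}{2}$), but the intrinsic computation via the defining formula for $A^r$ avoids this bookkeeping altogether.
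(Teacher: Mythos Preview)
Your proof is correct and follows essentially the same route as the paper: verify $c_{12}(A^r_\flat)=0$ to land in $\mathcal{T}_2\oplus\mathcal{T}_3$, then compute the cyclic sum and observe it vanishes precisely when $r=-2$. Your version is somewhat more explicit---you spell out why $A^r$ satisfies the Ambrose--Singer conditions (citing Propositions~\ref{prop:6.4}--\ref{prop:6.5} and the Okumura--Takahashi parallelism results) and justify why the cyclic sum, being fully alternating, is determined by its single value on $(e_1,e_2,e_3)$---whereas the paper simply records the full cyclic-sum tensor and reads off the factor $(r+2)$; but the substance is identical.
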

 \begin{proof}
 Some calculation show that $c_{12}(A^r_{\flat})=0$ and 
\[ 
\underset{X,Y,Z}{\mathfrak{S}}A^{r}_{\flat}(X,Y,Z)
=-4(r+2)\{
\theta^3\otimes(\theta^1\wedge\theta^2)
+\theta^1\otimes(\theta^2\wedge\theta^3)
+\theta^2\otimes(\theta^3\wedge\theta^1)
\}.
 \]
 \end{proof}
For more information on Sasakian $\varphi$-symmetric spaces, 
we refer to \cite{BV3,BuV,JK,KoWe}.  
Recently Ohnita \cite{Ohnita21,Ohnita22} used the connection $\nabla^r$ with $r=-1/2$.

The following lemma can be verified by 
direct computation.
\begin{lemma}\label{lem:sigma-connection}
Let $M=(M,\eta,\xi,\varphi,g)$ be a Sasakian manifold and 
$\sigma$ be a $1$-form on $M$. 
Define a linear connection $\nabla^{\sigma}$ by 
\[
\nabla^{\sigma}_{X}Y=\nabla_{X}Y+\mathrm{d}\eta(X,Y)\xi
+\eta(Y)\varphi X+\sigma(X)\varphi Y,
\quad X,Y\in\varGamma(\mathrm{T}M)).
\]
Then $\nabla^{\sigma}$ satisfies
\[
\nabla^{\sigma}\varphi=0,
\quad 
\nabla^{\sigma}\xi=0,
\quad
\nabla^{\sigma}\eta=0,
\quad
\nabla^{\sigma}g=0.
\]
In particular, if we choose $\sigma=-r\,\eta$, then 
$\nabla^{\sigma}$ coincides with $\nabla^r$.
\end{lemma}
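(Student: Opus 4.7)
The proof is a direct computation, so my plan is to organize the verification of the four parallelism statements efficiently and flag where the Sasakian axioms enter.

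First, I would record the Sasakian identities I will use repeatedly: $\nabla_X \xi = -\varphi X$, $(\nabla_X \varphi)Y = g(X,Y)\xi - \eta(Y)X$, $(\nabla_X\eta)Y = d\eta(X,Y) = g(X,\varphi Y)$, together with $\varphi\xi=0$, $\eta\circ\varphi=0$, $\varphi^2 X = -X + \eta(X)\xi$, and the skew-adjointness $g(\varphi X,Y) = -g(X,\varphi Y)$. These are all listed in Section~\ref{sec:3}.

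Next I would verify $\nabla^{\sigma}\xi=0$: plug $Y=\xi$ into the defining formula, and the four extra terms reduce to $d\eta(X,\xi)\xi + \varphi X + \sigma(X)\varphi\xi = \varphi X$, which cancels $\nabla_X\xi = -\varphi X$. For $\nabla^{\sigma}\eta$, I would compute $(\nabla^{\sigma}_X\eta)Y = X(\eta Y) - \eta(\nabla^{\sigma}_XY)$ and observe that the $\sigma(X)\eta(\varphi Y)$ and $\eta(Y)\eta(\varphi X)$ terms vanish, leaving $(\nabla_X\eta)Y - d\eta(X,Y)=0$. These two checks are immediate.

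The substantive step is $\nabla^{\sigma}\varphi = 0$. Here I expand
\[
(\nabla^{\sigma}_X\varphi)Y = (\nabla_X\varphi)Y + d\eta(X,\varphi Y)\xi + \eta(\varphi Y)\varphi X + \sigma(X)\varphi^2 Y - d\eta(X,Y)\varphi\xi - \eta(Y)\varphi^2 X - \sigma(X)\varphi^2 Y,
\]
substitute $d\eta(X,\varphi Y) = g(X,\varphi^2 Y) = -g(X,Y) + \eta(X)\eta(Y)$ and $\varphi^2 X = -X+\eta(X)\xi$, and use the Sasakian formula for $(\nabla_X\varphi)Y$. The $\sigma$-terms cancel against each other without any hypothesis on $\sigma$, and a short bookkeeping shows everything else collapses to zero. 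This is the only step requiring care, but it is still routine; I expect it to be the main (if modest) obstacle, since it is where both the Sasakian identity for $\nabla\varphi$ and the contact algebraic identities must combine correctly.

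Finally, for $\nabla^{\sigma}g$, I would write out $(\nabla^{\sigma}_X g)(Y,Z)$, use $\nabla g=0$ and the observation that the $\sigma$-contributions $-\sigma(X)[g(\varphi Y,Z)+g(Y,\varphi Z)]$ cancel by skew-adjointness of $\varphi$, while the $d\eta$-and-$\eta$ terms pair up as $-d\eta(X,Y)\eta(Z) + \eta(Z)d\eta(X,Y)$ and $\eta(Y)d\eta(X,Z) - d\eta(X,Z)\eta(Y)$, both vanishing. The concluding identification $\nabla^{\sigma}=\nabla^r$ for $\sigma = -r\eta$ is just a matter of reading off the definitions of $A^r(X)Y$ and $\nabla^{\sigma}_XY$ and noting the terms agree.
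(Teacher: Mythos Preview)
Your proposal is correct and matches the paper's approach exactly: the paper states only that the lemma ``can be verified by direct computation'' and gives no further argument, so your organized verification of the four parallelism conditions is precisely what is intended. Each of your computations checks out against the Sasakian identities listed in Section~\ref{sec:3}, including the key cancellation of the $\sigma$-terms in $(\nabla^{\sigma}_X\varphi)Y$ and the skew-adjointness cancellation in $(\nabla^{\sigma}_X g)(Y,Z)$.
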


\section{Sasakian space forms}\label{sec:7}
\subsection{}
Let $(M,\eta,\xi,\varphi,g)$ be a contact Riemannian manifold 
(of arbitrary odd dimension). 
A tangent plane at a point of $M$ is said to be a 
\emph{holomorphic
plane} if it is invariant under $\varphi$. The sectional curvature
of a holomorphic plane is called \emph{holomorphic sectional
curvature}. If the sectional curvature function of $M$ is constant
on all holomorphic planes in $\mathrm{T}M$, then $M$ is said to be of 
\emph{constant holomorphic sectional curvature}.

It is known that 
every Sasakian manifold of constant holomorphic sectional curvature is 
locally $\varphi$-symmetric \cite{Ta2}. 
Conversely, every $3$-dimensional Sasakian $\varphi$-symmetric 
space is of constant holomorphic sectional 
curvature \cite{BV1}.

Complete and connected Sasakian manifolds of constant holomorphic
sectional curvature are called \emph{Sasakian space forms}.

\begin{example}[The unit sphere and Berger spheres]
{\rm Let $\mathbb{C}^{n+1}$ be the complex Euclidean 
$(n+1)$-space. Then the 
unit $(2n+1)$-sphere 
$\mathbb{S}^{2n+1}\subset\mathbb{C}^{n+1}$ inherits 
a Sasakian structure $(\varphi_1,\xi_1,\eta_1,g_1)$ from the standard K{\"a}hler structure. 
The resulting Sasakian manifold is a Sasakian space form of 
constant holomorphic sectional curvature $1$. 

Next, deform the Sasakian structure  $(\varphi_1,\xi_1,\eta_1,g_1)$ 
of $\mathbb{S}^{2n+1}$ as
\begin{equation}\label{eq:D-dorm}
\varphi_c:=\varphi_1,
\quad 
\eta_c:=\frac{4}{c+3}\eta_1,
\quad 
\xi_c:=\frac{c+3}{4}\xi_1,
\quad 
g_c:=\frac{4}{c+3}g_1-\frac{4(c-1)}{(c+3)^2}
\eta_1\otimes \eta_1,\quad c>-3.
\end{equation}
Then $(\mathbb{S}^{2n+1},\varphi_c,\xi_c,\eta_c,g_c)$ 
is a Sasakian space form of constant holomorphic sectional curvature 
$c>-3$. In this article the Sasakian space form 
$(\mathbb{S}^{2n+1},\varphi_c,\xi_c,\eta_c,g_c)$ is called the 
\emph{Berger sphere}. 
}
\end{example}

\begin{remark}{\rm Precisely speaking, 
the Berger sphere introduced by Berger 
is $(\mathbb{S}^3,\frac{c+3}{4}g_{c})$ with $c>1$.
}
\end{remark}

\begin{example}[Heisenberg group]
{\rm The \emph{Heisenberg group} 
$\mathrm{Nil}_3$ is the model space of 
nilgeometry in the sense of Thurston. 
The model space $\mathrm{Nil}_3$ is realized as
the Cartesian $3$-space $\mathbb{R}^3(x,y,z)$ 
together with the nilpotent Lie group structure
\[
(x,y,z)(x^{\prime},y^{\prime},z^{\prime})
=(x+x^{\prime},y+y^{\prime},
z+z^{\prime}+(xy^{\prime}-x^{\prime}y)/2)
\]
and the left invariant metric
\[
g=\frac{\mathrm{d}x^2+\mathrm{d}y^2}{4}
+\eta\otimes\eta,
\quad 
\eta=\frac{1}{2}\left(
\mathrm{d}z+\frac{y\,\mathrm{d}x-x\,\mathrm{d}y}{2}
\right).
\]
The $1$-form $\eta$ is a contact form with Reeb vector field $\xi=2\partial/
\partial{z}$. The contact form $\eta$ induces 
the endomorphism field $\varphi$ as 
\[
\varphi\frac{\partial}{\partial x}=
\frac{\partial}{\partial y}+\frac{x}{2}\frac{\partial}{\partial z},
\quad 
\varphi\frac{\partial}{\partial y}=
-\frac{\partial}{\partial y}+\frac{y}{2}\frac{\partial}{\partial z},
\quad 
\varphi\frac{\partial}{\partial z}=0.
\]
The Heisenberg group $\mathrm{Nil}_3$ equipped with the 
left invariant contact Riemannian structure $(\varphi,\xi,\eta,g)$ is a 
simply  
connected Sasakian space form of constant holomorphic sectional 
curvature $-3$.
}
\end{example}
\begin{example}\label{eg:hyperbolicSasakian} 
{\rm 
Let $\mathbb{H}^2(-\alpha^2)$ be the upper half plane 
equipped with the Poincar{\'e} metric
\[
\bar{g}=\frac{\mathrm{d}x^2+\mathrm{d}y^2}{\alpha^2y^2}
\]
of constant curvature $-\alpha^2<0$ as before.  
On the product manifold
\[
\mathbb{H}^2(-\alpha^2)\times\mathbb{R}
=\{(x,y,z)\in\mathbb{R}^3\>|\>y>0
\},
\]
we equip the 
following contact Riemannian structure:
\[
\eta=\mathrm{d}z-\frac{2\mathrm{d}x}{\alpha^2 y},
\quad \xi=\frac{\partial}{\partial z},
\quad 
g=\frac{\mathrm{d}x^2+\mathrm{d}y^2}{\alpha^2y^2}+\eta\otimes\eta,
\]
\[
\varphi \frac{\partial}{\partial x}
=\frac{\partial}{\partial y},
\quad 
\varphi \frac{\partial}{\partial y}
=-\frac{\partial}{\partial x}
-\frac{2}{\alpha^2 y}\frac{\partial}{\partial z},
\quad 
\varphi \frac{\partial}{\partial z}=0.
\]
Then the resulting contact Riemannian manifold 
$\mathscr{M}^3(c)=(\mathbb{H}^2(-\alpha^2)\times\mathbb{R},\eta,\xi,
\varphi,g)$ is a simply connected 
Sasakian space form of 
constant holomorphic sectional curvature $c=-3-\alpha^2<-3$. 
As a Riemannian manifold, $\mathscr{M}^3(c)=(\mathbb{H}^2(-\alpha^2)\times\mathbb{R},g)$ 
is isometric the the model space of SL-geometry in the sense of Thurston. 
The projection $\pi:\mathscr{M}^3(c)\to\mathbb{H}^2(c+3)$ is a 
Riemannian submersion and defines a 
principal line bundle over $\mathbb{H}^2(c+3)$.
}
\end{example}

Tanno \cite{Ta} classified simply connected Sasakian space forms. 
Moreover if a Sasakian $(2n+1)$-manifold has the automorphims group 
of maximum dimension $(n+1)^2$, then it is locally isomorphic to a Sasakian space 
form.
Tanno's classification is 
described as follows:

\begin{proposition}{\rm (\cite{Ta})}
Let $\mathscr{M}^{3}(c)$ be a 
$3$-dimensional simply connected Sasakian space
form of constant holomorphic sectional curvature $c$. 
Then $\mathscr{M}^{3}(c)$ is isomorphic to one of the following spaces{\rm:}
\begin{itemize} 
\item $c=1:$ The unit $3$-sphere $\mathbb{S}^3$ equipped with the standard Sasakian 
structure.
\item $c>1$ or $-3<c<1:$ The Berger $3$-sphere $(\mathbb{S}^3,\varphi_c,\xi_c,\eta_c,g_c)$ equipped with 
a Sasakian structure obtained by the deformation \eqref{eq:D-dorm} from the unit $3$-sphere $\mathbb{S}^3$.
\item $c=-3:$ The Heisenberg group $\mathrm{Nil}_3$.
\item $c<-3:$ The Sasakian space form $\mathbb{H}^{2}(c+3)\times\mathbb{R}$.
\end{itemize}
\end{proposition}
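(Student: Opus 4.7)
The plan is to verify that each of the four listed model spaces is a Sasakian space form of the claimed holomorphic sectional curvature $c$, and then to invoke a rigidity statement: any simply connected complete Sasakian manifold of constant holomorphic sectional curvature $c$ is isomorphic to the corresponding model. Since the four cases exhaust the real line, this yields the classification.

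For existence, the standard Sasakian structure on $\mathbb{S}^{3}\subset\mathbb{C}^{2}$ can be shown to have $c=1$ by direct computation using the Hopf fibration. Applying the $\mathcal{D}$-homothetic deformation \eqref{eq:D-dorm} with parameter $a=4/(c+3)$ yields, by the universal transformation law for holomorphic sectional curvature under such deformations, a Sasakian structure of constant holomorphic sectional curvature $c$; this realises every value $c>-3$, $c\neq 1$, as a Berger sphere. For $c=-3$ one verifies the value of $c$ by computing the Levi-Civita connection and Riemann curvature of the left-invariant metric on $\mathrm{Nil}_{3}$; for $c<-3$ one sets $\alpha^{2}=-3-c>0$ and performs the analogous check on $\mathbb{H}^{2}(-\alpha^{2})\times\mathbb{R}$ with the contact Riemannian structure of Example~\ref{eg:hyperbolicSasakian}.

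Uniqueness rests on the Sasakian analogue of the K\"ahler space form formula: the Riemann curvature of a Sasakian manifold of constant holomorphic sectional curvature $c$ is a fixed universal expression in $c$ and the structure tensors $(\varphi,\xi,\eta,g)$. Combined with Proposition~\ref{prop:6.4}, this gives $\nabla^{r}R^{r}=0$, so that $\mathscr{M}^{3}(c)$ is locally $\varphi$-symmetric and locally homogeneous with homogeneous contact Riemannian structure $A^{r}$ in the sense of Section~\ref{sec:5}. A Cartan-Ambrose-Hicks argument adapted to the Sasakian setting then produces a local contact Riemannian isomorphism from any such $M$ to the corresponding model, intertwining $\varphi$, $\xi$, $\eta$ and $g$ simultaneously; completeness and simple connectedness extend this to a global isomorphism. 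The main obstacle is executing this last step: one must transport a $\varphi$-adapted orthonormal frame $\{e_{1},\varphi e_{1},\xi\}$ in a way consistent with the CR-structure, and verify that the resulting map respects the full contact Riemannian structure rather than only the metric. The deformation trick covers the range $c>-3$ uniformly, but the two exceptional ranges $c=-3$ and $c<-3$ must be treated separately, since no $\mathcal{D}$-homothetic deformation connects them to $\mathbb{S}^{3}$.
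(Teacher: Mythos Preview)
The paper does not prove this proposition at all: it is stated as Tanno's classification theorem and attributed to \cite{Ta}, with the surrounding text merely describing the result (``Tanno \cite{Ta} classified simply connected Sasakian space forms\dots Tanno's classification is described as follows''). So there is no proof in the paper to compare your proposal against.

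That said, your outline is a reasonable sketch of how such a classification is actually established. The existence part is fine and essentially reproduces what the paper's Examples 7.1--7.3 already verify. For uniqueness, your appeal to the explicit curvature formula for Sasakian space forms together with a Cartan--Ambrose--Hicks/Ambrose--Singer argument is the standard route; note, however, that invoking Proposition~\ref{prop:6.4} and local $\varphi$-symmetry is a detour---once you have the universal curvature expression $R=R(c,\varphi,\xi,\eta,g)$, the relevant parallelism conditions for the homogeneous structure follow directly, and the Sasakian-adapted CAH argument (transporting a $\varphi$-adapted frame along broken geodesics, using $\tilde{\nabla}\varphi=\tilde{\nabla}\xi=\tilde{\nabla}\eta=0$) gives the local isomorphism without passing through $\varphi$-symmetry. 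Your last paragraph correctly identifies the only genuinely delicate point: checking that the resulting isometry intertwines $\varphi$ and not just $g$, which is what upgrades it to a Sasakian isomorphism.
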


As we mentioned above, every $3$-dimensional 
Sasakian space form has $4$-dimensional automorphism group. 
Simply connected $3$-dimensional Sasakian spaces 
are expressed as homogeneous Sasakian $3$-manifolds with 
$4$-dimensional automorphism group as follows (\textit{cf}. \cite{BDI2,BV1}) :

\begin{proposition}
Let $\mathscr{M}^{3}(c)$ be a 
$3$-dimensional simply connected Sasakian space
form of constant holomorphic sectional curvature $c$. 
Then $\mathscr{M}^{3}(c)$ is isomorphic to one of the following 
homogeneous Sasakian manifolds of with $4$-dimensional 
automorphism group{\rm:}
\begin{itemize} 
\item $c>1:$ The Berger $3$-sphere equipped with a homogeneous Sasakian structure. 
The Berger $3$-sphere is realized as 
the special unitary group $\mathrm{SU}(2)$ equipped with a 
left invariant Sasakian structure. 
Moreover the Berger sphere can be represented by 
$\mathrm{U}(2)/\mathrm{U}(1)$ as a normal homogeneous space 
equipped with a homogeneous Sasakian structure. 
\item $c=1:$ The unit $3$-sphere $\mathbb{S}^3$ equipped with a 
homogeneous Sasakian structure. 
The unit $3$-sphere $\mathbb{S}^3$ is identified as $\mathrm{SU}(2)$ equipped with a 
left invariant Sasakian structure. Moreover 
$\mathbb{S}^3$ can be represented by $\mathbb{S}^3=\mathrm{U}(2)/
\mathrm{U}(1)$ as a normal homogeneous space 
equipped with a homogeneous Sasakian structure. 
\item $-3<c<1:$ 
The Berger $3$-sphere equipped with a homogeneous Sasakian structure. 
The Berger $3$-sphere is identified as $\mathrm{SU}(2)$ equipped with a 
left invariant Sasakian structure. Moreover the Berger $3$-sphere is represented by 
$(\mathrm{SU}(2)\times\mathrm{U}(1))/
\mathrm{U}(1)$ as a naturally reductive homogeneous space. 
The naturally reductive homogeneous space $(\mathrm{SU}(2)\times\mathrm{U}(1))/
\mathrm{U}(1)$ is non-normal. 
\item $c=-3:$ Heisenberg group
$\mathrm{Nil}_{3}$ equipped with a left invariant Sasakian structure.
The Heisenberg group can be represented by 
$(\mathrm{Nil}_3\ltimes\mathrm{U}(1))/\mathrm{U}(1)$ as a 
naturally reductive homogeneous equipped with a homogeneous Sasakian structure. 
\item $c<-3:$ The universal covering 
group $\widetilde{\mathrm{SL}}_2\mathbb{R}$ of the real special linear group 
$\mathrm{SL}_2\mathbb{R}$ equipped with a left invariant Sasakian structure. 
Those Sasakian space forms can be represented by 
$(\widetilde{\mathrm{SL}}_2\mathbb{R}\times\mathrm{SO}(2))/\mathrm{SO}(2)$ as a
naturally reductive homogeneous space equipped with a 
homogeneous Sasakian structure.
\end{itemize}
\end{proposition}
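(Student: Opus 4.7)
The plan is to argue case by case, using the preceding Tanno classification to fix the underlying simply connected Sasakian space form, and then to construct the stated 4-dimensional automorphism group and coset decomposition for each value of $c$. For each case I would carry out three tasks: (i) exhibit a left-invariant Sasakian structure on the appropriate 3-dimensional Lie group $G_{0}\in\{\mathrm{SU}(2),\mathrm{Nil}_{3},\widetilde{\mathrm{SL}}_{2}\mathbb{R}\}$ whose holomorphic sectional curvature equals $c$; (ii) enlarge $G_{0}$ to a 4-dimensional group of automorphisms $G$, matching Tanno's sharp upper bound $\dim\mathrm{Aut}(M)\leq(n+1)^{2}=4$; and (iii) verify the stated reductive, normal, or naturally reductive coset presentations.

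For task (i), I would choose an adapted orthonormal Lie-algebra basis $\{e_{1},e_{2},e_{3}\}$ with $e_{3}=\xi$ and $e_{2}=\varphi e_{1}$, and use the Sasakian characterization $\nabla\xi=-\varphi$ together with the first structure equation of Section~2 to pin down the Lie brackets up to a free parameter. Computing the sectional curvature of the $\varphi$-invariant plane $\mathrm{span}\{e_{1},e_{2}\}$ from the curvature forms then expresses $c$ as a function of that parameter, and matching identifies $G_{0}$ with $\mathscr{M}^{3}(c)$ by the uniqueness half of Tanno's result.

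For task (ii), the Reeb flow consists of automorphisms on every Sasakian manifold because $\xi$ is Killing, $\pounds_{\xi}\varphi=2h=0$, and $\pounds_{\xi}\eta=0$. Combined with the left translations of $G_{0}$, an auxiliary 1-parameter group---the maximal torus acting from the right in the cases $G_{0}=\mathrm{SU}(2)$ or $\widetilde{\mathrm{SL}}_{2}\mathbb{R}$, or the rotational $\mathrm{U}(1)$ symmetry in the base for $G_{0}=\mathrm{Nil}_{3}$---generates a 4-dimensional Lie group $G\subseteq\mathrm{Aut}(M)$. Tanno's bound then forces equality, and repackaging $G$ yields the groups $\mathrm{U}(2)$, $\mathrm{SU}(2)\times\mathrm{U}(1)$, $\mathrm{Nil}_{3}\rtimes\mathrm{U}(1)$, or $\widetilde{\mathrm{SL}}_{2}\mathbb{R}\times\mathrm{SO}(2)$ listed in the proposition.

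For task (iii), the isotropy subalgebra is the 1-dimensional stabilizer of the origin, embedded (anti-)diagonally in the product presentations. Taking $\mathfrak{m}$ as the $\langle\cdot,\cdot\rangle$-orthogonal complement yields a reductive splitting, and the naturally reductive identity from the proposition of Section~1 is a direct Lie-bracket verification. For $c\geq 1$ with the $\mathrm{U}(2)/\mathrm{U}(1)$ presentation, the metric on $\mathfrak{m}$ is a positive multiple of the restriction of the Killing form of $\mathfrak{u}(2)$, giving normality. The main obstacle is the Berger range $-3<c<1$: the isotropy $\mathrm{U}(1)$ must be embedded diagonally in $\mathrm{SU}(2)\times\mathrm{U}(1)$ with a slope tuned to the deformation parameter in \eqref{eq:D-dorm} so that the reductive complement carries the correct Berger metric, and showing that the resulting presentation is naturally reductive but \emph{not} normal requires comparing $\langle\cdot,\cdot\rangle$ with every bi-invariant bilinear form on $\mathfrak{su}(2)\oplus\mathfrak{u}(1)$.
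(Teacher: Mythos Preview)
The paper does not supply a proof of this proposition: it is stated with the attribution ``(\textit{cf.}~\cite{BDI2,BV1})'' and then immediately used, so there is no argument in the text to compare your proposal against. Your three-step plan (realize the left-invariant Sasakian structure on the appropriate $G_0$, enlarge to a 4-dimensional automorphism group, verify the reductive/naturally reductive/normal claims) is the natural route and matches how the cited literature treats these spaces.

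One small correction to your sketch: in task~(iii) you justify normality for $c\geq 1$ by saying the metric on $\mathfrak{m}$ is a positive multiple of the restriction of the Killing form of $\mathfrak{u}(2)$. The Killing form of $\mathfrak{u}(2)$ is degenerate (it annihilates the center $\mathfrak{u}(1)$), so it cannot induce a Riemannian metric on its own. What you need instead is an $\mathrm{Ad}$-invariant inner product on $\mathfrak{u}(2)$, which is a linear combination of the Killing form on the $\mathfrak{su}(2)$ summand and a separate positive form on the center; the freedom in this combination is exactly what produces the one-parameter family of Berger metrics with $c>-3$. Once $c$ drops below $1$ the required inner product ceases to be positive definite on all of $\mathfrak{u}(2)$ (the center picks up the wrong sign relative to $\mathfrak{su}(2)$), which is why the presentation becomes merely naturally reductive rather than normal---this is the mechanism behind the ``non-normal'' assertion you flagged as the main obstacle.
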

All $3$-dimensional Sasakian space forms 
belong to the so-called Bianchi-Cartan-Vranceanu family 
\cite{Bi,Ca,V} (see also \cite{BDI2}).

On the other hand, every simply connected $3$-dimensional 
Sasakian space form $\mathscr{M}^{3}(c)$ itself is a Lie group, 
thus $\mathscr{M}^{3}(c)$ can be represented as a coset manifold  
$\mathscr{M}^{3}(c)/\{\mathsf{e}\}$ where $\mathsf{e}$ is the unit element.
Thus at least, $\mathscr{M}^{3}(c)$ has two coset representations:
$\mathrm{Aut}(\mathscr{M}^{3}(c))/\mathrm{SO}(2)$ and $\mathscr{M}^{3}(c)/\{\mathsf{e}\}$.

The reductive decomposition 
corresponding to the naturally reductive homogeneous structure of 
the Berger $3$-sphere and $\mathrm{SL}_2\mathbb{R}$ are 
explicitly given in \cite{IM1} and \cite{IM2}, respectively.
\subsection{Sasakian Lie algebras}

\begin{definition}\cite[Definition 2]{AFV}
{\rm 
Let $\mathfrak{a}$ be Lie algebra 
of dimension $2n+1>1$.  
A quadruple 
$(\varphi,\xi,\eta,\langle\cdot,
\cdot\rangle$) on $\mathfrak{g}$ consisting 
of a linear endomorphism 
$\varphi\in\mathrm{End}
(\mathfrak{g})$, a vector 
$\xi\in\mathfrak{g}$, a covector 
$\eta\in\mathfrak{g}^{*}$ and an 
inner product 
$\langle\cdot,
\cdot\rangle$ on $\mathfrak{g}$ 
is said to be 
a \emph{Sasakian structure} 
if 
it satisfies 
\[
\varphi^2=-\mathrm{I}+\eta\otimes\xi, 
\quad \eta(\xi)=1,
\]
\[
\langle\varphi X,\varphi Y\rangle
=\langle X,Y\rangle-\eta(X)\eta(Y),
\]
\[
2\langle X,\varphi Y\rangle=-\eta([X,Y]),
\] 
\[
\varphi^2[X,Y]+[\varphi X,
\varphi Y]-\varphi[\varphi X,Y]
-\varphi[X,\varphi Y]
-\eta([X,Y])=0.
\]
A Lie algebra $\mathfrak{g}$ equipped with a 
Sasakian structure is called a 
\emph{Sasakian Lie algebra}.
}
\end{definition}
Let $G$ be a Lie group with Sasakian 
Lie algebra $\mathfrak{g}
=(\mathfrak{g},\varphi,\xi,\eta,\langle\cdot,
\cdot\rangle)$, then the Sasakian structure 
of $\mathfrak{g}$ induces a left invariant 
Sasakian structure on $G$. 

\begin{proposition}[\cite{AFV}]
The center $\mathfrak{z}(\mathfrak{g})$ of a Sasakian Lie 
algebra $\mathfrak{g}$ satisfies
\begin{itemize}
\item $\dim\mathfrak{z}(\mathfrak{g})\leq 1$.
\item If $\dim\mathfrak{z}(\mathfrak{g})=1$, then 
$\mathfrak{z}(\mathfrak{g})=\mathbb{R}\xi$.
\end{itemize}
\end{proposition}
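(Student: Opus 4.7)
The plan is to exploit the contact‐type condition $2\langle X,\varphi Y\rangle=-\eta([X,Y])$, which immediately converts centrality into an orthogonality statement, and then to identify the resulting orthogonal complement as $\mathbb{R}\xi$ using the other axioms.

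First I would collect three elementary consequences of the axioms that do not use the normality condition at all. From $\varphi^{2}=-\mathrm{I}+\eta\otimes\xi$ applied to $\xi$ one gets $\varphi^{2}\xi=0$; feeding this back into $\varphi^{2}(\varphi\xi)=-\varphi\xi+\eta(\varphi\xi)\xi$ forces $\varphi\xi=\eta(\varphi\xi)\xi$ and then, after one more iteration, $\eta(\varphi\xi)=0$, hence $\varphi\xi=0$. Substituting $Y=\xi$ in the compatibility of $\varphi$ with $\langle\cdot,\cdot\rangle$ and using $\varphi\xi=0$ yields $\eta(X)=\langle X,\xi\rangle$, so $\xi$ is metrically dual to $\eta$. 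Finally, on $\ker\eta$ the identity $\varphi^{2}=-\mathrm{I}$ holds, so $\varphi|_{\ker\eta}$ is bijective and $\varphi(\mathfrak{g})=\varphi(\ker\eta)=\ker\eta$.

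Now take $Z\in\mathfrak{z}(\mathfrak{g})$. For every $Y\in\mathfrak{g}$ we have $[Z,Y]=0$, so the contact identity gives
\[
2\langle Z,\varphi Y\rangle=-\eta([Z,Y])=0.
\]
As $Y$ ranges over $\mathfrak{g}$, $\varphi Y$ ranges over $\varphi(\mathfrak{g})=\ker\eta$, so $Z$ is orthogonal to $\ker\eta$. Using $\eta=\langle\xi,\cdot\rangle$, this orthogonal complement is exactly $\mathbb{R}\xi$, so $Z\in\mathbb{R}\xi$. Consequently $\mathfrak{z}(\mathfrak{g})\subseteq\mathbb{R}\xi$, which forces $\dim\mathfrak{z}(\mathfrak{g})\le 1$; and if the dimension is one, any non-zero central element is a scalar multiple of $\xi$ (so in particular $\xi$ itself is central) and thus $\mathfrak{z}(\mathfrak{g})=\mathbb{R}\xi$.

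No genuine obstacle appears: the normality (integrability) condition is not needed, only the almost-contact axioms, the compatibility of $\varphi$ with the metric, and the contact condition. The only technical care required is to verify $\varphi\xi=0$ and $\eta=\langle\xi,\cdot\rangle$ from the stated axioms rather than importing them as extra assumptions, and to note that $\varphi(\mathfrak{g})=\ker\eta$, which is where the argument converts algebraic centrality into a geometric orthogonality that isolates the direction of $\xi$.
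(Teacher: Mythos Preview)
Your argument is correct. The paper does not supply its own proof of this proposition; it is quoted from \cite{AFV} and used as a black box in the classification of $3$-dimensional Sasakian Lie algebras. So there is no ``paper's proof'' to compare against.

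One small point worth making explicit in your write-up: the equality $\varphi(\ker\eta)=\ker\eta$ needs the inclusion $\varphi(\ker\eta)\subseteq\ker\eta$, not just injectivity of $\varphi|_{\ker\eta}$. You can get this cheaply from the contact identity itself: since $2\langle X,\varphi Y\rangle=-\eta([X,Y])$ is skew in $X,Y$, the form $(X,Y)\mapsto\langle X,\varphi Y\rangle$ is skew-symmetric, hence $\eta(\varphi X)=\langle\xi,\varphi X\rangle=-\langle\varphi\xi,X\rangle=0$. Alternatively, from $\varphi^{2}=-\mathrm{I}$ on $\ker\eta$ and $\varphi\xi=0$ one checks directly that $\ker\eta\subseteq\varphi(\ker\eta)$ and then compares dimensions. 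Either way the gap is easily closed; the core idea---that the contact condition turns centrality into orthogonality to $\ker\eta$---is exactly right, and as you note it does not require the normality axiom.
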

According to Boothby and Wang 
\cite[Theorem 5]{BW}, 
the only real semi-simple Lie algebras 
admitting a contact form are 
$\mathfrak{su}(2)$ and $\mathfrak{sl}_2\mathbb{R}$.

On a Sasakian Lie algebra $\mathfrak{g}$, 
we obtain an orthogonal direct sum decomposition 
(see \cite[Corollary 7]{AFV}):
\[
\mathfrak{g}=\mathrm{Ker}\>\mathrm{ad}(\xi)
\oplus
\mathrm{Im}\,\mathrm{ad}(\xi).
\]
Moreover both $\mathrm{Ker}\>\mathrm{ad}(\xi)$ and 
$\mathrm{Im}\,\mathrm{ad}(\xi)$ are 
$\varphi$-invariant linear subspaces.

Now let $\mathfrak{g}$ be a 
$3$-dimensional Sasakian Lie algebra. 
If $[\mathfrak{g},\mathfrak{g}]=\mathfrak{g}$, 
then 
$\mathfrak{g}$ is semi-simple and hence 
$\mathfrak{g}\cong\mathfrak{su}(2)$ or 
$\mathfrak{g}\cong\mathfrak{sl}_2\mathbb{R}$.

Next, when $[\mathfrak{g},\mathfrak{g}]\not=\mathfrak{g}$, 
then $\mathfrak{g}$ is solvable.
One can confirm that 
$\dim (\mathrm{Ker}\,\mathrm{ad}(\xi)
\cap \mathrm{Ker}\,\eta)=2$ and 
hence $\xi$ is an element of the center 
$\mathfrak{z}(\mathfrak{g})$ of $\mathfrak{g}$. Note that 
$\dim  \mathfrak{z}(\mathfrak{g})=1$. 
The quotient Lie algebra 
$\mathfrak{g}/\mathfrak{z}(\mathfrak{g})$ is $2$-dimensional, 
thus it 
is isomorphic to 
the abelian Lie algebra $\mathbb{R}^2$ or the 
Lie algebra $\mathfrak{ga}(1)$ of the 
affine transformation group 
\[
\mathrm{GA}(1)=\mathrm{GL}_{1}\mathbb{R}\ltimes\mathbb{R}
=\left\{
\left.
\left(
\begin{array}{cc}
y & x\\
0 &1
\end{array}
\right)
\>
\right|
\>x,y\in\mathbb{R},\,y\not=0
\right\}
\]
of the 
real line. 
The Sasakian structure of $\mathfrak{g}$ 
induces a K{\"a}hler algebra structure on 
$\mathfrak{z}(\mathfrak{g})$. In the former case $\mathfrak{g}$ is 
isomorphic to the Heisenberg algebra 
$\mathfrak{nil}_3$.
In the latter case, $\mathfrak{g}$ is isomorphic to the 
direct sum $\mathfrak{ga}(1)\oplus\mathbb{R}$ 
of 
$\mathfrak{ga}(1)$ and the abelian Lie algebra $\mathbb{R}$.

There exists a basis 
$\{\theta^1,\theta^2,\theta^3\}$ of $\mathfrak{g}^{*}$ 
satisfying
\[
\mathrm{d}\theta^1=0,\quad 
\mathrm{d}\theta^2=\omega_1^2,\quad 
\mathrm{d}\theta^3=2\omega_1^2 
\]
\[
\xi=e_3,\quad E_1=e_1,\quad E_2=e_2,\quad E_3=\xi-2e_2
\]
Thus, all $3$-dimensional Sasakian algebras are classified as follows 
(see \cite{AFV}):
\begin{theorem}
Any $3$-dimensional Sasakian 
Lie algebra is isomorphic to 
one of the following ones{\rm:}
\[
\mathfrak{su}(2), 
\quad 
\mathfrak{sl}_2\mathbb{R},
\quad 
\mathfrak{nil}_3, 
\quad 
\mathfrak{ga}(1)\oplus\mathbb{R}.
\]
\end{theorem}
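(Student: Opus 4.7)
The plan is to split the argument by whether $\mathfrak{g}$ is perfect, i.e.\ whether $[\mathfrak{g},\mathfrak{g}]=\mathfrak{g}$. In the perfect case, a 3-dimensional real Lie algebra with trivial abelianization is automatically semisimple, and the standard classification of real semisimple Lie algebras in dimension three gives $\mathfrak{g}\cong\mathfrak{su}(2)$ or $\mathfrak{g}\cong\mathfrak{sl}_2\mathbb{R}$. The Boothby-Wang theorem cited in the excerpt guarantees that both of these admit contact forms, so they do occur among Sasakian Lie algebras.

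In the non-perfect case, $\mathfrak{g}$ must be solvable by a dimension count on the derived series. My first step is to show that $\xi$ is central. Using the orthogonal decomposition $\mathfrak{g}=\mathrm{Ker}\,\mathrm{ad}(\xi)\oplus\mathrm{Im}\,\mathrm{ad}(\xi)$ recalled in the excerpt, together with the relations $\dim(\mathrm{Ker}\,\mathrm{ad}(\xi)\cap\mathrm{Ker}\,\eta)=2$ and $\eta(\xi)=1$, I conclude that $\mathrm{Ker}\,\mathrm{ad}(\xi)=\mathfrak{g}$, that is, $\xi\in\mathfrak{z}(\mathfrak{g})$. Because $\dim\mathfrak{z}(\mathfrak{g})\le 1$, this forces $\mathfrak{z}(\mathfrak{g})=\mathbb{R}\xi$, and the quotient $\bar{\mathfrak{g}}:=\mathfrak{g}/\mathbb{R}\xi$ is a 2-dimensional real Lie algebra, hence either abelian or isomorphic to $\mathfrak{ga}(1)$.

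It remains to reconstruct $\mathfrak{g}$ as a central extension of $\bar{\mathfrak{g}}$ by $\mathbb{R}\xi$. Pick a $\varphi$-adapted basis $\{e_1,\varphi e_1,\xi\}$ of $\mathfrak{g}$ so that $\{e_1,\varphi e_1\}$ descends to a basis of $\bar{\mathfrak{g}}$. If the quotient is abelian, then $[e_1,\varphi e_1]=\lambda\xi$ for some $\lambda\in\mathbb{R}$, and the Sasakian compatibility identity $2\langle e_1,\varphi(\varphi e_1)\rangle=-\eta([e_1,\varphi e_1])$ yields $\lambda=2\langle e_1,e_1\rangle\neq 0$; after rescaling $e_1$ we recover the Heisenberg algebra $\mathfrak{nil}_3$. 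If the quotient is $\mathfrak{ga}(1)$, I lift a basis $\bar E_1,\bar E_2$ of $\bar{\mathfrak{g}}$ with $[\bar E_1,\bar E_2]=\bar E_2$ to elements $\tilde E_1,\tilde E_2\in\mathfrak{g}$ so that $[\tilde E_1,\tilde E_2]=\tilde E_2+\alpha\xi$ for some $\alpha\in\mathbb{R}$; replacing $\tilde E_2$ by $\tilde E_2+\alpha\xi$ absorbs the central term and exhibits $\mathfrak{g}\cong\mathfrak{ga}(1)\oplus\mathbb{R}$.

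The main obstacle is this last centrality-and-extension step, because one must check that the Sasakian data on $\mathfrak{g}$ is genuinely enough to constrain the extension as claimed and in particular to rule out the trivially abelian $\mathbb{R}^3$ (which carries no contact form). The identity $2\langle X,\varphi Y\rangle=-\eta([X,Y])$ is the essential input that prevents this degeneration and pins down $\lambda\ne 0$ in the abelian-quotient case, as well as the vanishing of the cocycle obstruction in the $\mathfrak{ga}(1)$-quotient case. Conversely, the existence of each of the four candidates as a Sasakian Lie algebra is already witnessed by the model spaces $\mathbb{S}^3$, $\widetilde{\mathrm{SL}}_2\mathbb{R}$, $\mathrm{Nil}_3$, and $\mathbb{H}^2(-\alpha^2)\times\mathbb{R}$ introduced earlier in the paper.
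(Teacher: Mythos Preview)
Your proof is correct and follows essentially the same route as the paper: split on whether $[\mathfrak{g},\mathfrak{g}]=\mathfrak{g}$, invoke semisimplicity and Boothby--Wang in the perfect case, and in the solvable case use the $\varphi$-invariant decomposition $\mathfrak{g}=\mathrm{Ker}\,\mathrm{ad}(\xi)\oplus\mathrm{Im}\,\mathrm{ad}(\xi)$ to force $\xi$ central, then analyze the two possible $2$-dimensional quotients. You actually supply more detail than the paper on the central-extension step (using $2\langle X,\varphi Y\rangle=-\eta([X,Y])$ to pin down the Heisenberg cocycle and absorbing the central term in the $\mathfrak{ga}(1)$ case), whereas the paper simply asserts the resulting isomorphisms.
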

Here we exhibit an explicit model 
of the simply connected Lie group corresponding to 
$\mathfrak{ga}(1)\oplus\mathbb{R}$. Assume that $\tilde{G}(c)$ 
is a $3$-dimensional 
simply connected non-unimodular Lie group equipped with a left invariant Sasakian structure 
$(\eta,\varphi,\xi,g)$, then $\tilde{G}(c)$ is a Sasakian space form 
of constant holomorphic sectional curvature $c<-3$ (\cite{Perrone}, see also 
\cite{I-UU}). 
Set $c=-3-\alpha^2$ for some non-zero constant $\alpha$, then 
there exists an orthonormal basis 
$\{e_1,e_2,e_3\}$ of the Lie algebra $\mathfrak{g}$ of $G$ satisfying 
(see \textit{e.g.} \cite{Perrone}):
\begin{equation}\label{eq:nonuni}
[e_1,e_2]=\alpha e_2+2e_3,
\quad 
[e_2,e_3]=[e_3,e_1]=0.
\end{equation}
One can confirm that $\mathfrak{g}$ is isomorphic to 
$\mathfrak{ga}(1)\oplus\mathbb{R}$ (see \textit{e.g}, \cite{AFV}).

The left invariant Sasakian structure is
described as 
\begin{equation}\label{eq:nonuni-phi}
\varphi e_1=e_2,
\quad \varphi e_2=-e_1,
\quad \varphi e_3=0,
\quad \xi=e_3.
\end{equation}
The unimodular kenel 
\[
\mathfrak{u}=\{X\in\mathfrak{g}\>|\>\mathrm{tr}\>\mathrm{ad}(X)=0\}
\]
is spanned by $\{e_2,e_3\}$. One can see that 
$\tilde{G}(c)$ is isomorphic to the 
following solvable linear Lie group (\textit{cf.} \cite{IN,Milnor}):
\begin{equation}\label{eq:G}
\left\{
\left.
\left(
\begin{array}{cccc}
1 & 0 & 0 & x\\
0 & e^{\alpha x} & 0 & y\\
0 & \frac{2}{\alpha}(e^{\alpha x}-1) & 1 & z\\
0 & 0 & 0 &1
\end{array}
\right)
\>
\right|
\> x,y,z\in\mathbb{R}
\right\}.
\end{equation}
The orthonormal basis 
$\{e_1,e_2,e_3\}$ defines a left invariant vector fields:

\[
e_1=\frac{\partial}{\partial x},
\quad 
e_2=e^{\alpha x}\frac{\partial}{\partial y},
\quad 
e_3=\frac{2}{\alpha}(e^{\alpha x}-1)\frac{\partial}{\partial y}
+\frac{\partial}{\partial z}.
\]
Generally speaking, generic non-symmetric 
$3$-dimensional Riemannian Lie group $G=(G,\langle\cdot,\cdot\rangle)$  
(Lie group equipped with left invariant metrics) 
has unique expression. Here the uniqueness 
means that if a $3$-dimensional 
Riemannian Lie group 
$(G^{\prime},\langle\cdot,\cdot\rangle^{\prime})$ is 
isometric to $(G,\langle\cdot,\cdot\rangle)$ as a 
Riemannian $3$-manifold, 
then $G^{\prime}$ is 
isomorphic to $G$ as a Lie group. 
However as like $\widetilde{\mathrm{SL}}_2\mathbb{R}$ and 
$\tilde{G}(c)$, non-isomorphic $3$-dimensional 
Lie groups might admit left invariant metrics which make them isometric as Riemannian $3$-manifolds. In $3$-dimensional 
Lie group theory, other than 
$\widetilde{\mathrm{SL}}_2\mathbb{R}$ and 
$\tilde{G}(c)$, Euclidean $3$-space $\mathbb{E}^3$ and 
hyperbolic $3$-space $\mathbb{H}^3$ does not satisfy 
the uniqueness. For more detail, see \cite{I,MP}.

\subsection{}
Let us determine Ambrose-Singer connections 
on the simply connected non-unimodular Lie group 
$\tilde{G}(c)$ equipped with a left invariant Sasakian structure 
of constant holomorphic sectional curvature 
$c=-3-\alpha^2<-3$.

The Levi-Civita connection of $\tilde{G}(c)$ is given by the following table 
(\cite[p.~251]{Perrone}):
\begin{proposition}
Let $G$ be a Lie group equipped with a left 
invariant metric $g$. Assume that the Lie algebra $\mathfrak{g}$ of 
$G$ admits an orthonormal basis satisfying  
the commutation relations \eqref{eq:nonuni}, then 
the Levi-Civita connection $\nabla$ is described as
\[
\begin{array}{ccc}
\nabla_{e_1}e_{1}=0, & \nabla_{e_1}e_{2}=e_{3}, 
& 
\nabla_{e_1}e_{3}=-e_{2}\\
\nabla_{e_2}e_{1}=
-\alpha{e}_{2}-e_{3}, 
& \nabla_{e_2}e_{2}=\alpha e_1, & 
\nabla_{e_2}e_{3}=e_{1}\\
\nabla_{e_3}e_{1}=-e_{2},
& \nabla_{e_3}e_{2}= e_{1} &
 \nabla_{e_3}e_{3}=0.
\end{array}
\]
The Riemannian curvature $R$ is given by
\begin{align*}
R(e_1,e_2)e_1=&(
3+\alpha^2)e_{2},
\quad 
R(e_1,e_2)e_2=(-3-\alpha^2)
e_{1},
\quad 
R(e_1,e_2)e_3=0,
\\
R(e_1,e_3)e_1=& -e_{3},
\quad 
R(e_1,e_3)e_1=0,
\quad 
R(e_1,e_3)e_3=
e_{1},
\\
R(e_2,e_3)e_1=&0,
\quad 
R(e_2,e_3)e_2=
-e_{3},
\quad 
R(e_2,e_3)e_3=e_{2}.
\end{align*}
The sectional curvatures are
\[
K_{12}=-3-\alpha^2, 
\quad 
K_{13}=
K_{23}
=1.
\]
The non trivial components of the Ricci tensor field are
\[
R_{11}=
R_{22}=-\alpha^{2}-2,
\quad 
R_{33}=2.
\]
Hence the Ricci tensor field has the form{\rm:}
\[
\mathrm{Ric}=-(2+\alpha^2)g+(4+\alpha^2)\eta\otimes\eta,
\]
where $\eta$ is the metrical dual $1$-form of $\xi:=e_3$. 
Moreover the left invariant 
almost contact structure $(\eta,\xi,\varphi,g)$ 
defined by \eqref{eq:nonuni-phi} is Sasakian and of 
constant holomorphic sectional curvature $c=-3-\alpha^2$.
\end{proposition}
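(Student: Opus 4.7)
The plan is a direct frame-level computation starting from the bracket relations \eqref{eq:nonuni}, with the Sasakian statement as a corollary of the connection table. First I would apply the Koszul formula
\[
2g(\nabla_{X}Y,Z)=g([X,Y],Z)-g([Y,Z],X)+g([Z,X],Y)
\]
to the left-invariant orthonormal frame $\{e_1,e_2,e_3\}$; the derivative terms $X(g(Y,Z))$ drop out because the $g(e_i,e_j)$ are constant. Since the only nonzero bracket is $[e_1,e_2]=\alpha e_2+2e_3$, each of the nine $\nabla_{e_i}e_j$ can be read off by inspection, yielding the tabulated values. As sanity checks, $\nabla_{e_i}$ should act skew-symmetrically on $\{e_1,e_2,e_3\}$ (metric compatibility) and $\nabla_{e_i}e_j-\nabla_{e_j}e_i$ should reproduce $[e_i,e_j]$ (torsion-freeness).

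Second, I would substitute the connection table into $R(e_i,e_j)e_k=\nabla_{e_i}\nabla_{e_j}e_k-\nabla_{e_j}\nabla_{e_i}e_k-\nabla_{[e_i,e_j]}e_k$. Only the pair $(i,j)=(1,2)$ contributes a nontrivial $\nabla_{[\cdot,\cdot]}$ term, via $[e_1,e_2]=\alpha e_2+2e_3$, and the resulting nine expressions match the statement. The sectional curvatures $K_{12}=-3-\alpha^{2}$ and $K_{13}=K_{23}=1$ are then immediate. Tracing gives $R_{11}=R_{22}=-2-\alpha^{2}$, $R_{33}=2$; since $\xi=e_3$ and $\eta(e_i)=\delta_{i3}$, the coordinate-free identity $\mathrm{Ric}=-(2+\alpha^2)g+(4+\alpha^2)\eta\otimes\eta$ is verified by evaluating both sides on the frame.

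Finally, for the Sasakian claim the relevant column of the connection table reads $\nabla_{e_1}e_3=-e_2=-\varphi e_1$, $\nabla_{e_2}e_3=e_1=-\varphi e_2$, $\nabla_{e_3}e_3=0=-\varphi e_3$, that is, $\nabla\xi=-\varphi$. By the characterization of Sasakian $3$-manifolds recalled in Section \ref{sec:3}, this is equivalent to the Sasakian condition, provided we first confirm that $(\varphi,\xi,\eta,g)$ really is an associated almost contact Riemannian structure with $\eta$ a contact form; both follow at once from \eqref{eq:nonuni-phi} combined with $2\mathrm{d}\eta(X,Y)=-\eta([X,Y])$ on left-invariant vector fields. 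Since $\varphi e_1=e_2$, the holomorphic plane through $e_1$ is spanned by $\{e_1,e_2\}$, so the constant holomorphic sectional curvature is $K_{12}=-3-\alpha^{2}$. The only real obstacle is bookkeeping: one must keep signs consistent with the paper's conventions---the sign of $R$, the factor $\tfrac{1}{2}$ in $\mathrm{d}\eta$, the normalisation $\nabla\xi=-\varphi$ rather than $+\varphi$, and the pairing $g(X,\varphi Y)=\mathrm{d}\eta(X,Y)$---so a deliberate side-by-side convention check is well worth the effort before concluding.
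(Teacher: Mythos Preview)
Your approach is correct and complete. The paper itself does not supply a proof of this proposition: it simply records the connection table with a citation to Perrone \cite[p.~251]{Perrone} and states the curvature, Ricci, and Sasakian conclusions without argument. Your plan---Koszul formula on the left-invariant orthonormal frame, then direct substitution into the curvature tensor, then the $\nabla\xi=-\varphi$ criterion from Section~\ref{sec:3}---is exactly the standard verification and is what the cited reference does as well. Your closing remark about convention checks (the $\tfrac12$ in $\mathrm{d}\eta$, the sign of $R$, and the pairing $\mathrm{d}\eta(X,Y)=g(X,\varphi Y)$) is apt: those are the only places a slip could occur, and they are all consistent with the paper's conventions.
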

Let $\nabla^{(-)}$ be the Cartan-Schouten's 
$(-)$-connection of $\tilde{G}(c)$. Then 
$S=\nabla^{(-)}-\nabla$ is 
expressed by (see \cite[Theorem 1.3 (i.b)]{CFG}):
\begin{equation}\label{eq:minus}
S(X)Y=\mathrm{d}\eta(X,Y)\xi+\eta(Y)\varphi X+
(\sqrt{-(c+3)}\,\,\theta^2(X)+\eta(X))\varphi Y,
\quad X,Y\in\varGamma(\mathrm{T}G),
\end{equation}
where $\theta^2$ is the metrical dual $1$-form of $e_2$.
The homogeneous Riemannian structure 
$S$ is of type $\mathcal{T}_1\oplus
\mathcal{T}_2\oplus\mathcal{T}_3$ and can not be 
of type $\mathcal{T}_1$, $\mathcal{T}_2$ or $\mathcal{T}_3$.

The $(-)$-connection is different from 
Tanaka-Webster connection. Indeed, 
Tanaka-Webster 
connection $\hat{\nabla}$ 
is computed as follows.
\[
\hat{\nabla}_{e_1}e_1=
\hat{\nabla}_{e_1}e_2=
\hat{\nabla}_{e_1}e_{3}=0,
\]
\[
\hat{\nabla}_{e_2}e_1=-\alpha{e}_{2},
\quad 
\hat{\nabla}_{e_2}e_2=\alpha{e}_{1},
\quad
\hat{\nabla}_{e_2}e_{3}=0,
\]
\[
\hat{\nabla}_{e_3}e_1=
\hat{\nabla}_{e_3}e_2=
\hat{\nabla}_{e_3}e_{3}=0.
\]

\section{The homogeneous contact Riemannian structures on Sasakian space forms}
\label{sec:11}
In this section we carry out the classification of 
homogeneous contact Riemannian structures on $3$-dimensional Sasakian space forms.
\subsection{Connections and Curvatures}
Every simply connected and complete 
$3$-dimensional Sasakian space form $\mathscr{M}^{3}(c)$ of constant 
holomorphic sectional curvature is realized as a Lie group 
equipped with a left invariant contact Riemannian structure.
The Lie algebra of $\mathscr{M}^{3}(c)$ is spanned by 
an orthonormal basis $\{e_1,e_2,e_3\}$ satisfying 
the commutation relation (\textit{cf}. \cite{BDI2}):
\[
[e_1,e_2]=2e_3,\quad 
[e_2,e_3]=\frac{c+3}{2}e_1,
\quad
[e_3,e_1]=\frac{c+3}{2}e_2.
\]
Hence the Levi-Civita connection $\nabla$ is given 
by the following table: 
\[
\nabla_{e_1}e_1=0,
\quad
\nabla_{e_1}e_2=e_3,
\quad 
\nabla_{e_1}e_3=-e_2,
\]
\begin{equation}\label{eq:Levi-CivitaSS}
\nabla_{e_2}e_1=-e_3,
\quad 
\nabla_{e_2}e_2=0,
\quad 
\nabla_{e_2}e_3=e_1,
\end{equation}
\[
\nabla_{e_3}e_1=\frac{c+1}{2}e_2,
\quad 
\nabla_{e_3}e_2=-\frac{c+1}{2}e_1,
\quad 
\nabla_{e_3}e_3=0.
\]
The left invariant 
almost contact structure is defined by
\[
\varphi e_1=e_2,\quad \varphi e_2=-e_1, \quad 
\varphi e_3=0,
\quad \xi=e_3,\quad 
\eta=g(\cdot,e_3).
\]
Let 
$\Theta=(\theta^1,\theta^2,\theta^3)$ 
be the orthonormal coframe field 
metrically dual to $\{e_1,e_2,e_3\}$. 

From \eqref{eq:LC-omega} together with the table 
\eqref{eq:Levi-CivitaSS}, we obtain
\[
\omega_{2}^{\>\,1}=
-\frac{c+1}{2}
\theta^3,
\quad
\omega_{3}^{\>\,1}=\theta^2,
\quad 
\omega_{3}^{\>\,2}=-\theta^1.
\]
The Riemannian curvature $R$ of $\mathscr{M}^3(c)$ is described by
\[
R_{1212}=c,\quad  R_{1313}=R_{2323}=1.
\]
The sectional curvatures are
\[
K_{12}=c,\quad K_{13}=K_{23}=1.
\]
The Ricci tensor field
has components 
\[
R_{11}=R_{22}=c+1,\quad R_{33}=2
\]
and all the other components
are zero. Hence the scalar curvature is $2(c+2)$. 
Hence the Ricci tensor field is expressed as
\[
\mathrm{Ric}=(c+1)g+(1-c)\eta\otimes\eta.
\]

\subsection{The homogeneous structures}
In this section we prove the following main theorem 
of the present article. 
Let us represent $\mathscr{M}^{3}(c)$ as 
$\mathscr{M}^{3}(c)=(G\ltimes K)/K$, where
\[
G=\left.
\begin{cases}
\mathrm{SU}(2), & c>-3\\
\mathrm{Nil}_3, & c=-3,
\\
\widetilde{\mathrm{SL}}_2\mathbb{R}, & c<-3
\end{cases}
\right.
\quad 
K=\mathrm{SO}(2)\cong\mathrm{U}(1).
\]
Precisely speaking, when $c\not=-3$, 
$G\ltimes K$ is just a direct product $G\times K$.

Since the set of all homogeneous Riemannian structures
on the $3$-sphere $\mathbb{S}^{3}$ was determined by Abe \cite{Abe} 
(see section \ref{sec:Abe}), we concentrate our attention to Sasakian space forms with $c\not=1$. 

\begin{theorem}\label{thm:MainTheorem}
The set $\mathcal{S}$ of all homogeneous Riemannian structures
on a Sasakian space form $\mathscr{M}^3(c)=(G\ltimes K)/K$ with $c\not=1$ is 
described as follows{\rm:}
\begin{enumerate}
\item If $c\geq -3$, then  $\mathcal{S}=\{A^{r}\
\vert \ r\in \mathbb{R}\}$.  Namely the set of all Ambrose-Singer connections 
coincides with the one-parameter family of linear connections due to Okumura.
Moreover $\mathcal{S}$ coincides with the set of all homogeneous almost
contact Riemannian structures on $\mathscr{M}^3(c)$. 
The corresponding coset space representations are given as follows{\rm:}
\[
\mathscr{M}^3(c)=
\left\{
\begin{array}{cc}
(G\ltimes K)/K
& r\not=(c+1)/2
\\
G/\{\mathsf{e}\}
&
r=(c+1)/2.
\end{array}
\right.
\]
When $r=(c+1)/2$, the Ambrose-Singer 
connection $\nabla^r=\nabla+A^r$ is 
the Cartan-Schouten's $(-)$-connection of $G$. Every homogeneous Riemannian structure is of type 
$\mathcal{T}_2\oplus\mathcal{T}_3$. The homogeneous Riemannian structure $A^r$ is of type 
$\mathcal{T}_2$ if and only if $r=-2$.  
On the other hand, $A^r$ is of type $\mathcal{T}_3$ if and 
only if $r=1$. In this case 
$A^1=-\mathrm{d}V$.

\item If $c< -3$, then 
$\mathcal{S}=\{A^{r}\
\vert \ r\in \mathbb{R}\}\cup\{\nabla^{(-)}\}$, where 
$\nabla^{(-)}$ is the Cartan-Schouten's $(-)$-connection of 
the Sasakian Lie group $\mathrm{GA}^{+}(1)\times\mathbb{R}$. 
The set $\mathcal{S}$ coincides with the set of 
all homogeneous almost
contact Riemannian structures on $\mathscr{M}^3(c)$. 
The corresponding coset space representations are given as follows{\rm:}
\[
\mathscr{M}^3(c)=
\left\{
\begin{array}{ll}
(G\ltimes K)/K
& \nabla+A^{r},\quad r\not=(c+1)/2
\\
G/\{\mathsf{e}\}
&
\nabla+A^{r},\quad r=(c+1)/2
\\
(\mathrm{GA}^{+}(1)\times\mathbb{R})/\{\mathsf{e}\}
& \nabla^{(-)}.
\end{array}
\right.
\]
When $r=(c+1)/2$, the Ambrose-Singer 
connection $\nabla^r=\nabla+A^r$ is 
the Cartan-Schouten's $(-)$-connection of $G$. 
Every homogeneous Riemannian structure $A^{r}$ is of type 
$\mathcal{T}_2\oplus\mathcal{T}_3$. The homogeneous Riemannian structure $A^r$ is of type 
$\mathcal{T}_2$ if and only if $r=-2$.  
On the other hand, $A^r$ is of type $\mathcal{T}_3$ if and 
only if $r=1$. In this case 
$A^1=-\mathrm{d}V$. The Cartan-Schouten's $(-)$-connection for 
$\mathrm{GA}^{+}(1)\times\mathbb{R}$ is of type 
$\mathcal{T}_1\oplus\mathcal{T}_2\oplus\mathcal{T}_3$. 
\end{enumerate}
\end{theorem}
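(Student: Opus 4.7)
The plan is to exhaust all homogeneous Riemannian structures $S$ on $\mathscr{M}^3(c)$ by solving the Ambrose-Singer system directly in the left-invariant orthonormal frame $\{e_1,e_2,e_3\}$. On this space the Lie brackets and Levi-Civita connection \eqref{eq:Levi-CivitaSS} are explicit, and the Riemannian curvature has \emph{constant} components in $\{e_1,e_2,e_3\}$ (namely $R_{1212}=c$, $R_{1313}=R_{2323}=1$). Using the skew-symmetry $S_\flat(X,Y,Z)=-S_\flat(X,Z,Y)$ forced by $\tilde{\nabla}g=0$, I would write
\[
S_\flat = \theta^1\otimes\alpha_1 + \theta^2\otimes\alpha_2 + \theta^3\otimes\alpha_3
\]
with each $\alpha_i$ a $2$-form in $\{\theta^1,\theta^2,\theta^3\}$, parametrizing $S$ by nine smooth functions $S_{ijk}$. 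This translates $\tilde{\nabla}R=0$ and $\tilde{\nabla}S=0$ into explicit algebraic and differential systems on the $S_{ijk}$.

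The algebraic step $\tilde{\nabla}R=0$ compares $(\nabla_XR)(Y,Z)W$ (computable from \eqref{eq:Levi-CivitaSS}) against contractions of $R$ with $S$. Because $c\neq 1$, the Ricci operator of $\mathscr{M}^3(c)$ has two distinct eigenvalues with the Reeb vector $\xi=e_3$ as distinguished eigenvector; $\tilde{\nabla}R=0$ must preserve this eigenspace decomposition, so the off-diagonal components of $S$ vanish except those compatible with $\xi$ and $\varphi$. I expect that the residual component structure matches exactly Okumura's family $A^r$ with one free real parameter $r$. For $c<-3$ the algebraic system admits an additional discrete branch, since the remaining solvable-type degree of freedom is not suppressed; I expect this branch to be exactly the tensor from \eqref{eq:minus} associated with the second (non-isomorphic) Lie group structure $\mathrm{GA}^+(1)\times\mathbb{R}$ on $\mathscr{M}^3(c)$. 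Imposing $\tilde{\nabla}S=0$ afterwards fixes the surviving unknowns to be constants, yielding either $S=A^r$ or (for $c<-3$) the exceptional $(-)$-connection.

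Having enumerated the structures, I would finish as follows. By Lemma \ref{lem:sigma-connection} (with $\sigma=-r\eta$, or $\sigma=\sqrt{-(c+3)}\,\theta^2+\eta$ in the exceptional case) every surviving $\tilde{\nabla}$ preserves $\varphi$, proving these are also homogeneous \emph{contact} Riemannian structures. The type of $A^r$ follows from Proposition \ref{prop:6.6} and Corollary \ref{cor:6.1}: type $\mathcal{T}_2\oplus\mathcal{T}_3$ in general, $\mathcal{T}_2$ iff $r=-2$, and $\mathcal{T}_3$ iff $r=1$ with $A^1=-\mathrm{d}V$. The type $\mathcal{T}_1\oplus\mathcal{T}_2\oplus\mathcal{T}_3$ for the exceptional connection is read off from \eqref{eq:minus}. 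For the coset presentations, I apply the Ambrose-Singer reconstruction: the holonomy algebra $\tilde{\mathfrak{h}}$ of $\tilde{\nabla}$ is generated by the operators $\tilde{R}(X,Y)$. When $\tilde{\nabla}$ is a Cartan-Schouten $(-)$-connection of a Lie group, $\tilde{R}=0$ and hence $\tilde{\mathfrak{h}}=0$, giving $\mathscr{M}^3(c)=G/\{\mathsf{e}\}$; a direct computation using \eqref{eq:Levi-CivitaSS} shows this occurs in Okumura's family precisely when $r=(c+1)/2$, and occurs for the exceptional branch when $c<-3$. In all other cases $\tilde{\mathfrak{h}}\cong\mathfrak{u}(1)$ yields $(G\ltimes K)/K$.

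The main obstacle is the case $c<-3$, where $\mathscr{M}^3(c)$ admits two non-isomorphic Lie group structures ($\widetilde{\mathrm{SL}}_2\mathbb{R}$ and the solvable group \eqref{eq:G}), so one must both detect the additional exceptional structure and verify it is genuinely distinct from Okumura's family. Distinctness follows by type comparison (since no $A^r$ lies in $\mathcal{T}_1\oplus\mathcal{T}_2\oplus\mathcal{T}_3$ without being in a strictly smaller class, one checks the $\omega$-part of \eqref{eq:minus} does not vanish). The harder part is the rigidity statement: ruling out any further reductive decomposition, which requires a careful case analysis of the full algebraic system in the second stage to ensure no additional discrete or continuous branches have been overlooked.
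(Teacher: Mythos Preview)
Your overall strategy matches the paper's: work in the left-invariant frame, impose $\tilde\nabla g=0$ for skew-symmetry, then $\tilde\nabla R=0$ (equivalently $\tilde\nabla\mathrm{Ric}=0$ since $\dim M=3$), then $\tilde\nabla S=0$. But you mislocate where the case split on $c$ occurs, and this matters. The Ricci condition gives the \emph{same} constraints for every $c\neq 1$: it forces $S_\flat(X,e_2,e_3)=-\theta^1(X)$ and $S_\flat(X,e_3,e_1)=-\theta^2(X)$, leaving the entire $1$-form $\sigma(X):=S_\flat(X,e_1,e_2)$ --- three unknown \emph{functions}, not one real parameter --- free. There is no ``additional discrete branch'' at the algebraic stage.

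The branching appears only in the \emph{differential} system $\tilde\nabla S=0$, which reduces to $\tilde\nabla\sigma=0$ and hence to a nonlinear first-order PDE for $(\sigma_1,\sigma_2,\sigma_3)$; the paper isolates this as Lemma~\ref{lem:sigma}. One finds $\sigma_3$ constant, and integrability of the remaining equations forces $\sigma_1=\sigma_2\equiv 0$ whenever $\sigma_3\neq 1$ or $c+3\geq 0$, giving $S=A^r$ with $r=-\sigma_3$. If $c<-3$ and $\sigma_3=1$, however, there are genuinely \emph{non-constant} solutions $\sigma_1=\sqrt{-(c+3)}\cos\phi$, $\sigma_2=\sqrt{-(c+3)}\sin\phi$ with $\phi$ determined by a further PDE --- so your expectation that $\tilde\nabla S=0$ ``fixes the surviving unknowns to be constants'' fails precisely on the exceptional branch. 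The paper then rotates to the $\phi$-dependent (non-left-invariant) orthonormal frame $\{\tilde e_i\}$ in which $\tilde\nabla_{\tilde e_i}\tilde e_j=0$, reads off the commutation relations \eqref{eq:nonuni} of $\mathfrak{ga}(1)\oplus\mathbb R$, and thereby identifies $\tilde\nabla$ as the $(-)$-connection of $\tilde G(c)$. Your remaining steps (types via Corollary~\ref{cor:6.1}, holonomy of $\tilde\nabla$ for the coset representations, $\tilde\nabla\varphi=0$ via Lemma~\ref{lem:sigma-connection}) agree with the paper; and since solving the Ambrose--Singer system directly already exhausts all homogeneous structures, no separate ``rigidity'' argument of the kind you worry about at the end is needed.
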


\subsection{Proof of Theorem \ref{thm:MainTheorem}}

Let $S_{\flat}$ be a tensor field of 
type $(0,3)$ satisfying
\[
S_{\flat}(X,Y,Z)+S_{\flat}(X,Z,Y)=0.
\]
Then 
$S_{\flat}$ is expressed as
\[
S_{\flat}(X,Y,Z)
=2S_{\flat}(X,e_1,e_2)(\theta^1\wedge \theta^2)
+2S_{\flat}(X,e_2,e_3)(\theta^2\wedge \theta^3)
+2S_{\flat}(X,e_3,e_1)(\theta^3\wedge \theta^1).
\]
We define a tensor field $S$ by
\[
S_{\flat}(X,Y,Z)=g(S(X)Y,Z).
\]
By using $S$, we define a linear connection
$\tilde{\nabla}$ by $\tilde{\nabla}=\nabla+S$. 
Assume that $S$ is a homogeneous Riemannian structure, then 
$\tilde{\nabla}$ satisfies 
$\tilde{\nabla}R=0$. 
Since $\mathscr{M}^{3}(c)$ is $3$-dimensional, 
this condition is 
equivalent to $\tilde{\nabla}\mathrm{Ric}=0$, 
\textit{i.e.},
\[
(\nabla_{X}\mathrm{Ric})(Y,Z)=\mathrm{Ric}(S(X)Y,Z)+\mathrm{Ric}(Y,S(X)Z).
\] 
From $\mathrm{Ric}=(c+1)g+(1-c)\eta\otimes\eta$, we get
\[
(\nabla_{X}\mathrm{Ric})
(e_i,e_j)
=-(1-c)\left(\omega_{i}^{\>3}(X)\eta(e_j)
+\omega_{j}^{\>3}(X)\eta(e_i)
\right).
\]
Next 
\[
\mathrm{Ric}(S(X)e_i,e_j)+\mathrm{Ric}(e_i,S(X)e_j)
=(1-c)\left(S(X,e_i,e_3)\eta(e_j)
+S(X,e_j,e_3)\eta(e_i)
\right).
\]
Hence we get
\[
\omega_{i}^{\>3}(X)\eta(e_j)
+\omega_{j}^{\>3}(X)\eta(e_i)
=
-S_{\flat}(X,e_i,e_3)\eta(e_j)
-S_{\flat}(X,e_j,e_3)\eta(e_i).
\]
From this equation we deduce that 
\[
S_{\flat}(X,e_3,e_1)=\omega_{1}^{\>\>3}(X)=-\theta^{2}(X),
\quad 
S_{\flat}(X,e_2,e_3)=-\omega_{2}^{3}(X)=-\theta^{1}(X).
\]
To determine $S_{\flat}(X,e_1,e_2)$, we compute
$\tilde{\nabla}S$. The covariant form $S_{\flat}$ of $S$ is 
expressed as
\[S_{\flat}
=2\sigma\otimes(\theta^1\wedge \theta^2)
-2\theta^1\otimes(\theta^2\wedge \theta^3)
-2\theta^2\otimes(\theta^3\wedge \theta^1),
\]
where the $1$-form $\sigma$ is defined by 
$\sigma(X)=S_{\flat}(X,e_1,e_2)$. 
The connection $1$-forms 
$\{\tilde{\omega}_{i}^{\>\>j}\}$ of $\tilde{\nabla}$ relative to 
$\{e_1,e_2,e_3\}$ are given by
\[
\tilde{\omega}_{i}^{\>\>j}(X)=g(\tilde{\nabla}_{X}e_i,e_j)
=\omega_{i}^{\>\>j}(X)+S(X,e_i,e_j).
\]
Hence 
\[
\tilde{\omega}_{1}^{\>\>2}(X)
=\frac{c+1}{2}\theta^3(X)+\sigma(X),
\quad
\tilde{\omega}_{1}^{\>\>3}(X)
=
\tilde{\omega}_{2}^{\>\>3}(X)=0,
\]
\[
\tilde{\nabla}_{X}\theta^1=
\left(\frac{c+1}{2}\theta^3(X)+\sigma(X)
\right)\theta^2,
\quad 
\tilde{\nabla}_{X}\theta^2=-
\left(\frac{c+1}{2}\theta^3(X)+\sigma(X)
\right)\theta^1,
\quad 
\tilde{\nabla}_{X}\theta^3=0.
\]
By using these 
\[
\tilde{\nabla}_{X}S_{\flat}=2(\tilde{\nabla}_{X}\sigma)(\theta^1\wedge \theta^2).
\]
Hence $\tilde{\nabla}S_{\flat}=0$ if and only if 
$\tilde{\nabla}\sigma=0$. 

Represent $\sigma$ as $\displaystyle\sigma=\sum_{k=1}^{3}\sigma_k\theta^k$, then 
$\tilde{\nabla}_{X}\sigma$ is computed as
\begin{align*}
\tilde{\nabla}_{X}\sigma=&\quad
\left\{(\mathrm{d}\sigma_1)(X)-
\left(\frac{c+1}{2}\theta^3(X)+\sigma(X)\right)\sigma_2
\right\}\theta^1
\\
&+
\left\{(\mathrm{d}\sigma_2)(X)+
\left(\frac{c+1}{2}\theta^3(X)+\sigma(X)\right)
\sigma_1
\right\}\theta^2
\\
&+(\mathrm{d}\sigma_3)(X)\,\theta^3.
\end{align*}
Thus the parallelism of $S$ with respect to $\tilde{\nabla}$ is 
equivalent to the differential system:
\[
\mathrm{d}\sigma_1=\sigma_{2}
\left\{\tfrac{c+1}{2}\theta^3+\sigma\right\},
\quad 
\mathrm{d}\sigma_2=-\sigma_{1}\left\{\tfrac{c+1}{2}\theta^3+\sigma\right\},
\quad 
\mathrm{d}\sigma_3=0.
\]
Hence $\sigma_3$ is a constant.

Let us investigate the solutions 
$\{\sigma_1,\sigma_2\}$ to this differential system.
\begin{lemma}\label{lem:sigma}
Let $\sigma_3$ be a constant. Then 
the solutions $\sigma_1,\sigma_2\in
 C^{\infty}(\mathscr{M}^3(c))$ to the differential system{\rm:}
 \[
\mathrm{d}\sigma_1=\sigma_{2}
\left(\sum_{k=1}^{3}\sigma_k\theta^k
+\frac{c+1}{2}\theta^3
\right),
\quad 
\mathrm{d}\sigma_2=-\sigma_{1}
\left(\sum_{k=1}^{3}\sigma_k\theta^k
+\frac{c+1}{2}\theta^3
\right)
 \]
 are given as follows{\rm:}
 \begin{enumerate}
 \item If {\rm (i)} $\sigma_3\not=1$ or {\rm (ii)} $\sigma_3=1$ and $c+3\geq 0$, then 
$\sigma_1$ and $\sigma_2$ are identically $0$ on $\mathscr{M}^3(c)$.
\item If $\sigma_3=1$ and $c+3<0$, then the solutions 
are 
\begin{itemize}
\item $\sigma_1$ and $\sigma_2$ are identically $0$ on $\mathscr{M}^3(c)$ or
\item $\sigma_1=\sqrt{-(c+3)}\cos\phi$ and 
$\sigma_2=\sqrt{-(c+3)}\sin\phi$ for some function $\phi$ satisfying 
\[
\mathrm{d}\phi=
-\left(
(\sqrt{-(c+3)}\cos \phi)\,\theta^1
+(\sqrt{-(c+3)}\cos \phi)\,\theta^2
+\frac{c+3}{2}\theta^3
\right).
\] 
\end{itemize}
\end{enumerate}
\end{lemma}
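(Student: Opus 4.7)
The plan is to exploit a conserved quantity that collapses the system to a single equation $\mathrm{d}\phi=-\tau$, then to convert the problem into a closedness condition on $\tau$ and solve it using the structure equations of the left-invariant coframe.

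First I would set $\tau:=\sigma_1\theta^1+\sigma_2\theta^2+\alpha\,\theta^3$ with $\alpha:=\sigma_3+\tfrac{c+1}{2}$, so that the system reads $\mathrm{d}\sigma_1=\sigma_2\tau$ and $\mathrm{d}\sigma_2=-\sigma_1\tau$. Then
\[
\mathrm{d}(\sigma_1^2+\sigma_2^2)=2\sigma_1\sigma_2\tau-2\sigma_1\sigma_2\tau=0,
\]
so $\sigma_1^2+\sigma_2^2$ equals a nonnegative constant $\rho^2$. If $\rho=0$ we are in the trivial branch of the lemma. Otherwise I write $\sigma_1=\rho\cos\phi$, $\sigma_2=\rho\sin\phi$ for a smooth function $\phi$ on $\mathscr{M}^3(c)$ (legitimate globally because the space form is simply connected). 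Both original equations then reduce to the single equation $\mathrm{d}\phi=-\tau$.

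Integrability $\mathrm{d}^2\phi=0$ forces $\mathrm{d}\tau=0$. From the connection forms already recorded one obtains the structure equations
\[
\mathrm{d}\theta^1=-\tfrac{c+3}{2}\theta^2\wedge\theta^3,\quad
\mathrm{d}\theta^2=\tfrac{c+3}{2}\theta^1\wedge\theta^3,\quad
\mathrm{d}\theta^3=-2\theta^1\wedge\theta^2.
\]
Expanding $\mathrm{d}\tau$ and replacing each $\mathrm{d}\phi$ by $-\tau$ yields, after collecting in the basis $\{\theta^i\wedge\theta^j\}$,
\[
\mathrm{d}\tau=-(\rho^2+2\alpha)\,\theta^1\wedge\theta^2+\rho\sin\phi\bigl(\tfrac{c+3}{2}-\alpha\bigr)\theta^1\wedge\theta^3+\rho\cos\phi\bigl(\alpha-\tfrac{c+3}{2}\bigr)\theta^2\wedge\theta^3.
\]
Since $\sin\phi$ and $\cos\phi$ cannot vanish simultaneously and $\alpha$ is a constant, the three coefficients vanish only if $\alpha=\tfrac{c+3}{2}$ and $\rho^2=-2\alpha=-(c+3)$.

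Translating back, $\alpha=\tfrac{c+3}{2}$ reads $\sigma_3=1$, and $\rho^2=-(c+3)$ requires $c+3<0$. Thus nontrivial solutions exist exclusively in case~(2), with $\rho=\sqrt{-(c+3)}$ and $\phi$ satisfying the displayed first-order equation; conversely, under those constraints the same computation shows $\mathrm{d}\tau=0$, and the Poincar{\'e} lemma on the simply connected $\mathscr{M}^3(c)$ furnishes a global primitive $-\phi$, giving an honest solution. In every other case (either $\sigma_3\neq 1$, or $\sigma_3=1$ with $c+3\geq 0$) the two constraints are incompatible, so $\rho=0$ and only the trivial solution survives. The only mildly calculational step is the expansion of $\mathrm{d}\tau$; it comes out clean because the ``holonomy'' coefficient $\tfrac{c+3}{2}$ appearing in $\mathrm{d}\theta^1$ and $\mathrm{d}\theta^2$ is precisely the constant that $\alpha$ is forced to equal.
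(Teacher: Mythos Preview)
Your argument is correct and rests on the same core observation as the paper: the integrability of the system forces the $1$-form $\tau=\sigma+\tfrac{c+1}{2}\theta^3$ to be closed, and unpacking $\mathrm{d}\tau=0$ against the structure equations of the left-invariant coframe pins down $\sigma_3=1$ and $\sigma_1^2+\sigma_2^2=-(c+3)$.

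The organization differs. The paper first derives $\sigma_i\,\mathrm{d}\tau=0$, then introduces a primitive $f$ with $\mathrm{d}f=\tau$ and computes $[e_i,e_j]f$ in two ways to obtain the constraints \eqref{eq:8.2}--\eqref{eq:8.4}; only afterwards does it parametrize $(\sigma_1,\sigma_2)$ by an angle. You reverse the order: the observation $\mathrm{d}(\sigma_1^2+\sigma_2^2)=0$ is immediate from the system and lets you parametrize by $\phi$ at the outset, reducing everything to the single equation $\mathrm{d}\phi=-\tau$; then $\mathrm{d}^2\phi=0$ gives $\mathrm{d}\tau=0$, which you expand directly in the coframe. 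Your route avoids the auxiliary primitive $f$ and packages the three commutator computations into one exterior-derivative calculation, at the cost of needing the simple-connectedness of $\mathscr{M}^3(c)$ a touch earlier (to define $\phi$ globally). The paper's route, by contrast, keeps the pointwise analysis until the end and makes the role of each bracket $[e_i,e_j]$ individually visible. Both are short; yours is marginally more streamlined.
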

\begin{proof}
First of all, we remark that  the pair 
$\{\sigma_1=0, \sigma_2=0\}$ (both are identically zero) 
is a solution to the system. 
Here we assume that 
both of $\sigma_1$ and $\sigma_2$ are \emph{not} identically zero.

Let us consider the integrability condition 
of the system. 
The integrability condition 
is 
\[
(XY-YX-[X,Y])\sigma_{i}=0,\quad i=1,2.
\]
for any $X$, $Y\in\mathfrak{g}$. 
Equivalently, the integrability condition 
is 
\[
\mathrm{d}(\mathrm{d}\sigma_1)=\mathrm{d}(\mathrm{d}\sigma_2)=0.
\]
By straightforward computation we can deduce that 
the integrability condition is
\[
\sigma_{1}\,\mathrm{d}
\left(\sigma
+\frac{c+1}{2}\theta^3
\right)=0,
\quad 
\sigma_{2}\,\mathrm{d}
\left(\sigma
+\frac{c+1}{2}\theta^3
\right)=0,
\]
where $\sigma=\sigma_1\theta^1+\sigma_2\theta^2+\sigma_3\theta^3$. 
By the assumption we have
\[
\mathrm{d}
\left(\sigma
+\frac{c+1}{2}\theta^3
\right)=0.
\]
Hence there exists a smooth function $f$ such that 
\[
\sigma
+\frac{c+1}{2}\theta^3=\mathrm{d}f.
\]
Then we get
\[
e_{1}(f)=\sigma_1,
\quad 
e_{2}(f)=\sigma_2,
\quad 
e_{3}(f)=\sigma_3+\frac{c+1}{2}.
\]
From these we get
\[
[e_1,e_2]f=e_{1}(e_{2}(f))-e_{2}(e_{1}(f))=-\{(e_{1}(f))^2+(e_{2}(f))^2\}.
\]
On the other hand, from $[e_1,e_2]=2e_3$, we have
\[
[e_1,e_2]f=2e_{3}(f)=c+1+2\sigma_3.
\]
Thus we get
\begin{equation}\label{eq:8.2}
(e_{1}(f))^2+(e_{2}(f))^2=
-(c+1+2\sigma_3).
\end{equation}
Next, by using $[e_2,e_3]=(c+3)e_1/2$, we get 
\begin{equation}\label{eq:8.3}
\sigma_{1}(\sigma_3-1)=0.
\end{equation}
By using $[e_3,e_1]=(c+3)e_2/2$, we get 
\begin{equation}\label{eq:8.4}
\sigma_{2}(\sigma_3-1)=0. 
\end{equation}
If $\sigma_3\not=1$, then 
from \eqref{eq:8.3} and \eqref{eq:8.4}, 
we have $\sigma_1=\sigma_2=0$ (both are identically zero).  
This is a contradiction. Thus we have $\sigma_3=1$.

\begin{enumerate}
\item If $c+3\geq 0$, then from \eqref{eq:8.2}, 
\[
(e_{1}(f))^2+(e_{2}(f))^2=
-(c+3)\leq 0.
\]
Hence $\sigma_1=e_{1}(f)=0$ and  $\sigma_2=e_{2}(f)=0$. 
We know that even if $\sigma_3\not=1$, we have $\sigma_1=\sigma_2=0$. 
Thus the only solution of the system is $\sigma_1=\sigma_2=0$ when 
$c+3\geq 0$.
\item  If $c+3< 0$, then from \eqref{eq:8.2}, we have 
\[
\sigma_1^2+\sigma_2^2=-(c+3)>0.
\]
Hence $\sigma_1$ and $\sigma_2$ are expressed as
\begin{equation}\label{eq:8.5}
\sigma_1=\sqrt{-(c+3)}\,\cos\phi,
\quad 
\sigma_2=\sqrt{-(c+3)}\,\sin\phi
\end{equation}
for some function $\phi$. 
Inserting these formulas into the system, 
we obtain
\begin{equation}\label{eq:8.6}
\mathrm{d}\phi=-\left(
\sqrt{-(c+3)}\,\cos\phi\,\theta^1
+\sqrt{-(c+3)}\,\sin\phi\,\theta^2+\frac{c+3}{2}\,\theta^3
\right).
\end{equation}
Conversely let $\phi$ be a solution to \eqref{eq:8.6}. 
Define two functions $\sigma_1$ and $\sigma_2$ by 
\eqref{eq:8.5}, then $\{\sigma_1,\sigma_2\}$ 
is the solution of the system.
\end{enumerate}
\end{proof}
Let us continue to prove Theorem \ref{thm:MainTheorem}.
 
\subsubsection{The case {\rm (i)} $c+3\geq 0$ or {\rm (ii)} $c+3<0$ and $\sigma_3\not=1$}
In case $c+3\geq 0$, Lemma \ref{lem:sigma} implies 
that 
$\sigma_1=\sigma_2=0$. 
Next, in case $c+3<0$ and $\sigma_3\not=1$ we obtain 
$\sigma_1=\sigma_2=0$ again from Lemma \ref{lem:sigma}.

Hence $\sigma=\sigma_3\theta^3$ for some 
constant $\sigma_3$. Thus if we put $\sigma=-r\theta^3$ ($r\in\mathbb{R}$),
then we get 
\[
S_{\flat}=-2r \theta^{3}\otimes (\theta^{1}\wedge\theta^{2})
-2\theta^{1}\otimes (\theta^{2}\wedge\theta^{3})
-2\theta^{2}\otimes (\theta^{3}\wedge\theta^{1}),
\quad 
r \in \mathbb{R}.
\]
Obviously $S_{\flat}$ coincides with $A^{r}_{\flat}$. 
Thus the set of all Ambrose-Singer connections coinsides with the 
one-parameter family of Okumura's. 

The connection form $\tilde{\omega}$ and curvature form 
$\tilde{\varOmega}$ of the Ambrose-Singer connection 
$\tilde{\nabla}=\nabla+A^{r}$ are given by
\[
\tilde{\omega}=\left(r-\frac{c+1}{2}\right)
\left(
\begin{array}{ccc}
0 & \eta & 0\\
-\eta & 0 & 0\\
0 & 0 & 0
\end{array}
\right),
\quad 
\tilde{\varOmega}=
\left(r-\frac{c+1}{2}\right)
\left(
\begin{array}{ccc}
0 & -2\theta^1\wedge \theta^2 & 0\\
2\theta^1\wedge \theta^2 & 0 & 0\\
0 & 0 & 0
\end{array}
\right).
\]
Hence the isotropy subalgebra 
is trivial when and only when 
$r=(c+1)/2$. In this case we have 
$\tilde{\nabla}_{e_i}e_{j}=0$ for any 
$e_i$ and $e_j$. Thus the Ambrose-Singer 
connection is 
the Cartan-Schouten's $(-)$-connection.

\subsubsection{The case $c+3< 0$ and $\sigma_3=1$}
Next we consider the case $c+3<0$ and $\sigma_3=1$. 
In this case 

\begin{align*}
    \sigma_1&=\sqrt{-(c+3)} \cos \phi, \quad 
    \sigma_2=\sqrt{-(c+3)} \sin \phi, \quad 
    \sigma_3=1, \\
    e_1(\sigma_1)&=\sigma_1 \sigma_2, \quad 
    e_1(\sigma_2)=-(\sigma_1)^2, \\
    e_2(\sigma_1)&=(\sigma_2)^2, \quad 
    e_2(\sigma_2)=-\sigma_1 \sigma_2, \\
    e_3(\sigma_1)&=\frac{c+3}{2}\sigma_2, \quad
    e_3(\sigma_2)=-\frac{c+3}{2}\sigma_1.
\end{align*}
The components of $S$ are given by
\begin{align*}
S_{e_1}e_1&=\sigma_1 e_2, & S_{e_1}e_2&=-\sigma_1 e_1-e_3, & S_{e_1}e_3&=e_2, \\
S_{e_2}e_1&=\sigma_2 e_2+e_3, & S_{e_2}e_2&=-\sigma_2 e_1, & S_{e_2}e_3&=-e_1, \\
S_{e_3}e_1&=e_2, & S_{e_3}e_2&=-e_1, & S_{e_3}e_3&=0.
\end{align*}
By using the Sasakian structure, 
$S$ is expressed as
\[
S(X)Y=\mathrm{d}\eta(X,Y)\xi+\eta(Y)\varphi X+\sigma(X)\varphi Y.
\]
The Ambrose-Singer connection $\tilde{\nabla}=\nabla + S$ is described as
\begin{align*}
    \tilde{\nabla}_{e_1}e_1&=\sigma_1 e_2, & \tilde{\nabla}_{e_1}e_2&=-\sigma_1 e_1, & \tilde{\nabla}_{e_1}e_3&=0, \\
    \tilde{\nabla}_{e_2}e_1&=\sigma_2 e_2, & \tilde{\nabla}_{e_2}e_2&=-\sigma_2 e_1, & \tilde{\nabla}_{e_2}e_3&=0, \\
    \tilde{\nabla}_{e_3}e_1&=\frac{c+3}{2}e_2, & \tilde{\nabla}_{e_3}e_2&=-\frac{c+3}{2}e_1, & \tilde{\nabla}_{e_3}e_3&=0.
\end{align*}
One can confirm that the curvature tensor field $\tilde{R}$ of $\tilde{\nabla}$ vanishes.
For instance 
\begin{align*}
 \tilde{R} (e_1,e_2)e_1&=\tilde{\nabla}_{e_1}\tilde{\nabla}_{e_2}
 e_1-\tilde{\nabla}_{e_2}\tilde{\nabla}_{e_1}e_1
 -\tilde{\nabla}_{[e_1,e_2]}e_1 \\
    &=\tilde{\nabla}_{e_1}(\sigma_2e_2)-\tilde{\nabla}_{e_2}(\sigma_1e_2)-2\tilde{\nabla}_{e_3}e_1 \\
    &=(e_1\sigma_2)e_2+\sigma_2(-\sigma_1 e_1)-(e_2\sigma_1)e_2-\sigma_1
	(-\sigma_2e_1)-(c+3)e_2\\
    &=-(\sigma_1)^2e_2-(\sigma_2)^2e_2-(c+3)e_2=0.
\end{align*}
Moreover one can check that $\tilde{\nabla}\varphi=0$. 
Hence $S$ is a homogeneous contact Riemannian structure.

Let us change the left invariant orthonormal frame field
$\{e_1,e_2,e_3\}$ to 
\[
\tilde{e}_1=\frac{1}{\sqrt{-(c+3)}}
(\sigma_2 e_1-\sigma_1 e_2),
\quad 
\tilde{e}_2=\frac{1}{\sqrt{-(c+3)}}
(\sigma_1 e_1+\sigma_2e_2),
\quad 
\tilde{e}_3=e_3.
\]
Note that $\sqrt{\sigma_1^2+\sigma_2^2}
=\sqrt{-(c+3)}$ and 
\[
\varphi \tilde{e}_1=\tilde{e}_2,
\quad 
\varphi \tilde{e}_2=-\tilde{e}_1,
\quad 
\varphi \tilde{e}_3=0.
\]
It should be remarked that $\{\tilde{e}_1,\tilde{e}_2,\tilde{e}_3\}$ is 
\emph{not} left invariant. One can check that 
\[
\tilde{\nabla}_{\tilde{e}_i}\tilde{e}_j=0,\quad i,j=1,2,3.
\]

Let us determine the Lie group $L$ acting isometrically and 
transitively on $\mathscr{M}^3(c)$ corresponding to the 
homogeneous Riemannian structure $S$ and its 
reductive decomposition $\mathfrak{l}=\tilde{\mathfrak{h}}\oplus
\mathfrak{m}$. Here $\mathfrak{m}$ is identified with the 
tangent space of $\mathscr{M}^3(c)$ at the 
origin $o$.
Let us choose $o=\mathsf{e}$, the identity 
element of 
$\mathscr{M}^3(c)$, then $\mathfrak{m}=\mathfrak{g}$ 
as a real vector space equipped with an inner product. 
Since the isotropy algebra $\tilde{\mathfrak{h}}$ is 
the holonomy algebra spanned by the curvature operator $\tilde{R}$ 
of $\tilde{\nabla}$, we have $\tilde{\mathfrak{h}}=\{0\}$. 
Thus we have $\mathfrak{l}=\widetilde{\mathfrak h}\oplus
\mathfrak{m}=\{0\}\oplus\mathfrak{g}=\mathfrak{g}$. 
The basis 
$\{e_1, e_2, e_3\}$ is still an orthonormal basis of $\mathfrak{l}=\mathfrak{g}$.
The Lie blacket $[\cdot,\cdot]_{\mathfrak l}$ of $\mathfrak{l}$ is defined by
\begin{align*}
    [e_1,e_2]_{\mathfrak l}&=(-S_{e_1}e_2+S_{e_2}e_1)\vert_{\mathsf{e}} =\sigma_1(\mathsf{e})e_1+\sigma_2(\mathsf{e})e_2+2e_3, \\
    [e_2,e_3]_{\mathfrak l}&=(-S_{e_2}e_3+S_{e_3}e_2)\vert_{\mathsf{e}} =0, \\
    [e_3,e_1]_{\mathfrak l}&=(-S_{e_3}e_1+S_{e_1}e_3)\vert_{\mathsf{e}} 
    =0.
\end{align*}

Then we obtain
\begin{align*}
[\tilde{e}_1,\tilde{e}_2]_{\mathfrak l}
=&[e_1,e_2]_{\mathfrak l}
=\sigma_1(\mathsf{e}) e_1+\sigma_2(\mathsf{e}) e_2+2\tilde{e}_3
=\sqrt{-(c+3)}\tilde{e}_2+2\tilde{e}_3,
\\
[\tilde{e}_2,\tilde{e}_3]_{\mathfrak l}
=&
\frac{1}{\sqrt{-(c+3)}}\,[\sigma_1(\mathsf{e})e_1+\sigma_2(\mathsf{e})e_2,e_3]_{\mathfrak l}
=0,
\\
[\tilde{e}_3,\tilde{e}_1]_{\mathfrak l}
=&
\frac{1}{\sqrt{-(c+3)}}\,[
\sigma_2(\mathsf{e})e_1-\sigma_1(\mathsf{e})e_2,e_3]_{\mathfrak l}
=0.
\end{align*}
Hence the new orthonormal basis 
$\{\widetilde{e}_1,\widetilde{e}_2,\widetilde{e}_3\}$ 
satisfies the commutation relations \eqref{eq:nonuni} of 
$\mathfrak{ga}(1)\oplus\mathbb{R}$. 
Thus $L$ is isomorphic to 
the non-unimodular Lie group 
$\tilde{G}(c)$ and hence we obtain the 
coset space representation $\mathscr{M}^{3}(c)=\tilde{G}(c)
/\{\mathsf{e}\}$. 
Moreover 
$\{\widetilde{e}_1,\widetilde{e}_2,\widetilde{e}_3\}$ is left 
invariant with respect to the Lie group structure 
of $L$. Since $\tilde{\nabla}_{\tilde{e}_i}\tilde{e}_j=0$ for any 
$i,j=1,2,3$, the Ambrose-Singer connection 
$\tilde{\nabla}$ is the Cartan-Schouten's $(-)$-connection 
with respect to the Lie group structure 
$\tilde{G}(c)$. 

We interpret the $1$-form $\sigma$ in terms of 
$\mathfrak{l}=\mathfrak{ga}(1)\oplus\mathbb{R}$. 
Since $\tilde{\nabla}$ is 
the $(-)$-connection, 
the homogeneous Riemannian structure $S$ is 
left invariant with respect to the Lie group structure 
$\tilde{G}(c)$. 
So it suffices to compute the value of $\sigma$ at 
the origin $\mathsf{e}$. We have 
\[
\sigma(\tilde{e}_1)\vert_{\mathsf{e}}=0,
\quad
\sigma(\tilde{e}_2)\vert_{\mathsf{e}}=\sqrt{-(c+3)},
\quad 
\sigma(\tilde{e}_3)\vert_{\mathsf{e}}=1.
\]
Hence we get
\[
\sigma=\sqrt{-(c+3)}\,\,\tilde{\theta}^2+\eta,
\]
where $\tilde{\theta}^2$ is the metrical dual 
of $\tilde{e}_2$. Thus $S$ has the form:
\[
S(X)Y=\mathrm{d}\eta(X,Y)\xi+\eta(Y)\varphi X+
(\sqrt{-(c+3)}\,\,\tilde{\theta}^2(X)+\eta(X))\varphi Y.
\]
This coincides with \eqref{eq:minus}.

Thus we proved the our main theorem.

\begin{remark}{\rm The one-parameter family  
$\{A^r\}$ coincides with 
the one obtained in \cite[Theorem 1.2 (ii)]{CFG} under the choice 
$\lambda_1=\lambda_2=(c+3)/2$ and $\lambda_3=2$. Note that 
in \cite[Theorem 1.2]{CFG} and \cite[Theorem 1.3]{CFG}, the left 
invariance of $S$ is (implicitly) assumed.
}
\end{remark}

\subsection{The Heisenberg group}
In Theorem \ref{thm:MainTheorem},
by choosing $c=-3$,
we retrieve the classification of 
homogeneous Riemannian structures on 
$(\mathrm{Nil}_3,4g)$ due to Tricerri and Vanhecke \cite[Theorem 7.1]{TV}.
It should be remarked that 
the coset space representation $\mathrm{Nil}_3/\{\mathsf{e}\}$ 
corresponds to the Tanaka-Webster connection. Namely 
the Tanaka-Webster connection 
on $\mathrm{Nil}_3$ 
is characterized as the only 
canonical connection (Ambrose-Singer connection) 
corresponding to the 
coset space representation $\mathrm{Nil}_3/\{\mathsf{e}\}$. Moreover 
the Tanaka-Webster connection coincides with Cartan-Schouten's 
$(-)$-connection.

\subsection{The unit $3$-sphere}\label{sec:Abe}
Finally we discuss the homogeneous Riemannian structures 
of the unit $3$-sphere $\mathbb{S}^3$. 
In our setting and notation, Abe's classification is 
reformulated in the following manner:
\begin{theorem}[\cite{Abe}]
The homogeneous Riemannian structures on 
the unit $3$-sphere are classified as follows{\rm:}
\begin{enumerate}
\item $S(X)Y=-r\,\mathrm{d}V(X,Y)$ for some constant 
$r\geq 0$, $r\not=1$. The corresponding 
coset space representation is 
\[
\mathbb{S}^3=\mathrm{SO}(4)/\mathrm{SO}(3)=
(\mathrm{SU}(2)\times\mathrm{SU}(2))/\mathrm{SU}(2).
\]
The homogeneous Riemannian structure is of type $\mathcal{T}_3$.
\item $S(X)Y=A^{r}(X)Y$ for some $r\in\mathbb{R}$ with $r\not=1$. 
The homogeneous Riemannian structure is of type 
$\mathcal{T}_2\oplus\mathcal{T}_3$. 
It is of type $\mathcal{T}_2$ if and only if $r=-2$. 
The corresponding 
coset space representation is 
\[
\mathbb{S}^3
=(\mathrm{SU}(2)\times\mathrm{U}(1))/\mathrm{U}(1)=
\mathrm{U}(2)/\mathrm{U}(1).
\]
\item $S(X)Y=A^{1}(X)Y=-\mathrm{d}V(X,Y)$. 
The homogeneous Riemannian structure is of type 
$\mathcal{T}_3$. 
The corresponding 
coset space representation is 
\[
\mathbb{S}^3=\mathrm{SU}(2)/\{\mathsf{e}\}.
\]
\end{enumerate}
\item 
\end{theorem}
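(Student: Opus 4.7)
The plan is to adapt the proof of Theorem \ref{thm:MainTheorem} to the case $c=1$, but there is a crucial structural simplification: on $\mathbb{S}^3$ all sectional curvatures equal $1$, so the Riemannian curvature has the constant-curvature form $R(X,Y)Z=g(Y,Z)X-g(X,Z)Y$ and the Ricci tensor reduces to $\mathrm{Ric}=2g$. Consequently, the condition $\tilde\nabla g=0$ automatically implies $\tilde\nabla R=0$ (a direct tensorial check using the formula for $R$), so the \emph{only} substantive conditions on $S$ become the metric compatibility condition $S_\flat(X,Y,Z)+S_\flat(X,Z,Y)=0$ and $\tilde\nabla S=0$. This explains why the moduli space of homogeneous Riemannian structures on $\mathbb S^{3}$ is strictly larger than on the other Sasakian space forms: the Ricci constraint that pinned down $S_\flat(X,e_2,e_3)$ and $S_\flat(X,e_3,e_1)$ in the proof of Theorem \ref{thm:MainTheorem} is now absent.

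Following the notation in Section \ref{sec:11}, I would parametrize the most general metric tensor $S$ as
\[
S_\flat=2\sigma^{12}\otimes(\theta^{1}\wedge\theta^{2})+2\sigma^{23}\otimes(\theta^{2}\wedge\theta^{3})+2\sigma^{31}\otimes(\theta^{3}\wedge\theta^{1}),
\]
where $\sigma^{12},\sigma^{23},\sigma^{31}$ are arbitrary $1$-forms with components $\sigma^{ij}_{k}$ in the left-invariant coframe on $\mathrm{SU}(2)$. I would then compute the Ambrose--Singer connection forms $\tilde\omega_i^{\ j}=\omega_i^{\ j}+S_\flat(\cdot,e_i,e_j)$ from \eqref{eq:LC-omega} with $c=1$ (in which case $\omega_2^{\ 1}=-\theta^3$, $\omega_3^{\ 1}=\theta^2$, $\omega_3^{\ 2}=-\theta^1$), and impose $\tilde\nabla\sigma^{ij}=0$ for each of the three $1$-forms. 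This yields a coupled first-order linear system on the nine functions $\sigma^{ij}_k$.

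The main technical step is the integrability analysis of this system, using the commutation relations $[e_1,e_2]=2e_3$, $[e_2,e_3]=2e_1$, $[e_3,e_1]=2e_2$ of $\mathfrak{su}(2)$, in close analogy with Lemma \ref{lem:sigma}. Exploiting the cyclic symmetry of the indices (which comes from $\mathbb S^3$ having the full $\mathrm{SO}(3)$ isotropy rather than $\mathrm{SO}(2)$), I expect each of the three differential systems to collapse in the same way Lemma \ref{lem:sigma} did, forcing all $\sigma^{ij}_k$ to be constant. The resulting algebraic classification should then decompose naturally into two families meeting at a single structure: writing $\sigma^{12}=-r\,\theta^3+\tau^{12}$, $\sigma^{23}=-r'\,\theta^1+\tau^{23}$, $\sigma^{31}=-r''\,\theta^2+\tau^{31}$, the integrability forces either (i) the purely antisymmetric branch $r=r'=r''$, $\tau^{ij}=0$, which yields $S=-r\,\mathrm dV$ and case (1); or (ii) the Okumura branch with $r'=r''=1$, free $r$, and $\tau^{ij}=0$, giving $S=A^r$ and case (2); the intersection $r=1$ is case (3) and reproduces $A^1=-\mathrm dV$.

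Finally, for each family I would identify the corresponding coset representation by constructing $\mathfrak{l}=\tilde{\mathfrak h}\oplus\mathfrak m$ from the Ambrose--Singer data and computing its Lie bracket as in Section \ref{sec:2}. The holonomy algebra $\tilde{\mathfrak h}$ of $\tilde\nabla$ is $\mathfrak{so}(3)$ for case (1) with $r\neq 1$ (giving $\mathrm{SO}(4)/\mathrm{SO}(3)$), is $\mathfrak{u}(1)=\mathbb R\xi$ for case (2) with $r\neq 1$ (giving $\mathrm U(2)/\mathrm U(1)$), and is trivial for case (3) (giving $\mathrm{SU}(2)/\{\mathsf e\}$); the range $r\geq 0$ in case (1) comes from absorbing a sign via orientation reversal, and $r\neq 1$ from non-degeneracy of the reductive splitting. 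The principal obstacle will be step (iii) above: with nine unknowns coupled by the nonabelian structure equations of $\mathfrak{su}(2)$, the bookkeeping is considerably heavier than in Lemma \ref{lem:sigma}, and some care is needed to verify that the two families exhaust all solutions rather than admit further "mixed" branches.
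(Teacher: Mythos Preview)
The paper does not prove this theorem: it is quoted from Abe \cite{Abe} and merely reformulated in the paper's notation (Section \ref{sec:Abe}). Indeed, at the start of the proof of Theorem \ref{thm:MainTheorem} the authors explicitly exclude $c=1$ precisely because Abe had already settled that case. There is therefore no ``paper's own proof'' to compare your proposal against.

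That said, your plan to extend the argument of Theorem \ref{thm:MainTheorem} to $c=1$ is natural, and your key structural observation is correct: when $c=1$ one has $\mathrm{Ric}=2g$, so $\tilde\nabla R=0$ becomes automatic and the constraint that in the $c\neq 1$ proof pinned down $S_\flat(\cdot,e_2,e_3)$ and $S_\flat(\cdot,e_3,e_1)$ disappears. This is exactly why $\mathbb S^3$ admits the additional one-parameter family $-r\,\mathrm dV$ absent from the other Sasakian space forms. The residual analysis you outline (nine coupled component functions under $\tilde\nabla\sigma^{ij}=0$ and the $\mathfrak{su}(2)$ structure equations) is, as you acknowledge, substantially heavier than Lemma \ref{lem:sigma}, and your expectation that all $\sigma^{ij}_k$ come out constant is plausible but not yet argued. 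One shortcut worth considering: since $\mathbb S^3$ is compact and simply connected, every homogeneous Riemannian structure corresponds to a transitive effective action of a compact connected Lie group, and those acting on $\mathbb S^3$ are classically known (locally $\mathrm{SO}(4)$, $\mathrm{U}(2)$, $\mathrm{SU}(2)$); organizing the case split by the holonomy algebra $\tilde{\mathfrak h}\in\{\mathfrak{so}(3),\mathfrak{u}(1),0\}$ from the outset may be cleaner than letting it emerge from the PDE.
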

Although the homogeneous Riemannian structure $A^r$ is 
a homogeneous contact Riemannian structure for any $r\in\mathbb{R}$, 
the homogeneous Riemannian structure 
$-r\mathrm{d}V$ is a homogeneous contact Riemannian structure when and 
only when $r=1$. Note that $\dim \mathrm{SO}(4)=
\dim (\mathrm{SU}(2)\times\mathrm{SU}(2))=6$. On the other hand, 
$\dim \mathrm{Aut}(\mathbb{S}^3)=4$ \cite{Ta0}. Now we retrieve
the following classification \cite[Theorem 5.3]{GO} due to 
Gadea and Oubi{\~n}a.

\begin{corollary}[\cite{GO}]\label{cor:GO}
The set of all homogeneous contact Riemannian structures
on the unit $3$-sphere $\mathbb{S}^3$ 
is given by
$\{A^{r}\
\vert \ r\in \mathbb{R}\}$. 
\end{corollary}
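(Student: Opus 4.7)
The plan is to invoke Abe's classification (stated just above as Theorem from \cite{Abe}) to reduce the problem to checking, for each of the three families of homogeneous Riemannian structures on $\mathbb{S}^3$, the single additional condition $\tilde{\nabla}\varphi=0$ that defines a homogeneous \emph{contact} Riemannian structure. Since $\{A^r\}$ and $\{-r\,\mathrm{d}V\}$ are the two essential families (item (3) of Abe being just $A^1=-\mathrm{d}V$), only these two need to be examined.

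First I would dispatch the family $\{A^r\}$: by the Proposition of Okumura--Tanno cited in Section~\ref{sec:6}, the connection $\nabla^r=\nabla+A^r$ satisfies $\nabla^r\varphi=0$ for every $r\in\mathbb{R}$, so $A^r$ is automatically a homogeneous contact Riemannian structure. This shows the inclusion $\{A^r\mid r\in\mathbb{R}\}\subset\mathcal{S}_{\mathrm{contact}}(\mathbb{S}^3)$.

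The remaining step, which is the heart of the argument, is to check when an $S$ of the form $S(X)Y=-r\,\mathrm{d}V(X,Y)$ satisfies $\tilde{\nabla}\varphi=0$. Using formula \eqref{cross} on the Sasakian $3$-sphere, rewrite
\[
S(X)Y=r\,\mathrm{d}\eta(X,Y)\xi-r\eta(X)\varphi Y+r\eta(Y)\varphi X,
\]
and compute $(\tilde{\nabla}_X\varphi)Y=(\nabla_X\varphi)Y+S(X)\varphi Y-\varphi(S(X)Y)$. Using the Sasakian identities $(\nabla_X\varphi)Y=g(X,Y)\xi-\eta(Y)X$, $\varphi\xi=0$, $\eta\circ\varphi=0$, $\varphi^2=-I+\eta\otimes\xi$, and $\mathrm{d}\eta(X,\varphi Y)=-g(X,Y)+\eta(X)\eta(Y)$, the cross terms collapse and one is left with
\[
(\tilde{\nabla}_X\varphi)Y=(1-r)\bigl(g(X,Y)\xi-\eta(Y)X\bigr)=(1-r)(\nabla_X\varphi)Y.
\]
Hence $\tilde{\nabla}\varphi=0$ if and only if $r=1$, in which case $-r\,\mathrm{d}V=-\mathrm{d}V=A^1$, so this structure already lies in the $A^r$ family.

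The only genuine obstacle is the algebraic identification in that last step, and it is routine once one expands everything with the Sasakian identities above; there is no conceptual difficulty. Combining the two cases gives $\mathcal{S}_{\mathrm{contact}}(\mathbb{S}^3)=\{A^r\mid r\in\mathbb{R}\}$, which is Corollary~\ref{cor:GO}. I would also remark that item (3) of Abe's theorem is consistent with this: it corresponds to $r=1$ in either family, and therefore appears in the $A^r$ parametrization, so nothing is lost by discarding the $-r\,\mathrm{d}V$ line for $r\neq 1$.
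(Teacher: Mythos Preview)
Your proof is correct. Both you and the paper start from Abe's classification and use the Okumura--Tanno proposition to handle the $A^r$ family; the only substantive step is ruling out $-r\,\mathrm{d}V$ for $r\neq 1$, and here your route differs from the paper's. You compute $(\tilde{\nabla}_X\varphi)Y=(1-r)(\nabla_X\varphi)Y$ directly from the Sasakian identities, which settles the matter by an elementary tensor calculation. The paper instead invokes a dimension count: the coset representation attached to $-r\,\mathrm{d}V$ with $r\neq 1$ is $\mathrm{SO}(4)/\mathrm{SO}(3)$, so the transitive group has dimension $6$, whereas $\dim\mathrm{Aut}(\mathbb{S}^3)=4$; hence the group cannot act by contact-metric automorphisms and $S$ cannot be a homogeneous contact Riemannian structure. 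Your computation is more self-contained (it does not need Tanno's bound on $\dim\mathrm{Aut}$ or the link between $\tilde{\nabla}\varphi=0$ and the transitive group preserving $\varphi$), and it yields the sharper formula $(\tilde{\nabla}_X\varphi)=(1-r)(\nabla_X\varphi)$ as a by-product; the paper's argument, on the other hand, explains structurally \emph{why} the symmetric-space family is incompatible with the contact structure.
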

Combining this classification with our main theorem, we obtain the 
following corollary.

\begin{corollary}
The set of all homogeneous contact Riemannian structures
on a Sasakian space form $\mathscr{M}^3(c)$ coincides 
with the set $\mathcal{S}$ of all homogeneous Riemannian structures on 
$\mathscr{M}^3(c)$ described in Theorem {\rm \ref{thm:MainTheorem}} and 
Corollary {\rm\ref{cor:GO}}.
\end{corollary}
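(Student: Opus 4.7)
The plan is to synthesize Theorem \ref{thm:MainTheorem} with Corollary \ref{cor:GO}, splitting on whether $c = 1$ or not. In both cases the work of showing each candidate is automatically a homogeneous contact Riemannian structure has essentially already been done; what remains is simply to assemble the pieces and observe that no additional obstruction appears.

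When $c \neq 1$, Theorem \ref{thm:MainTheorem} asserts that $\mathcal{S}$ coincides with the set of all homogeneous almost contact Riemannian structures on $\mathscr{M}^3(c)$. To make the contact condition $\tilde{\nabla}\varphi = 0$ transparent on each member of $\mathcal{S}$, I would observe that the Okumura connections $\nabla^r = \nabla + A^r$ satisfy $\nabla^r\varphi = 0$ by the Okumura--Tanno proposition of Section \ref{sec:6}, and in the extra case $c < -3$ the connection $\nabla^{(-)}$ on $\mathrm{GA}^+(1) \times \mathbb{R}$ has difference tensor of the form \eqref{eq:minus}, which falls under Lemma \ref{lem:sigma-connection} with $\sigma = \sqrt{-(c+3)}\,\tilde{\theta}^2 + \eta$, so $\tilde{\nabla}\varphi = 0$ is automatic. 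The reverse inclusion is trivial: every homogeneous contact Riemannian structure is in particular a homogeneous Riemannian structure, and so belongs to $\mathcal{S}$.

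When $c = 1$, the conclusion is exactly Corollary \ref{cor:GO} of Gadea and Oubi\~{n}a, which identifies the homogeneous contact Riemannian structures on $\mathbb{S}^3$ with $\{A^r : r \in \mathbb{R}\}$. Note that for $c = 1$, Abe's classification of \emph{all} homogeneous Riemannian structures is strictly larger than the contact ones: the $\mathcal{T}_3$-type structures $-r\,\mathrm{d}V$ with $r \neq 1$ do not preserve $\varphi$ and so fail the contact compatibility. Thus the role of Corollary \ref{cor:GO} is precisely to supply the $c = 1$ piece that Theorem \ref{thm:MainTheorem} explicitly excludes. There is no substantive obstacle in the argument; the only non-trivial verification is the $\varphi$-compatibility of the exceptional connection $\nabla^{(-)}$ for $c < -3$, and Lemma \ref{lem:sigma-connection} reduces this to inspection of its difference tensor.
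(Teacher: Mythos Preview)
Your proposal is correct and matches the paper's approach: the paper gives no separate proof of this corollary, stating only that it follows by ``combining this classification with our main theorem,'' i.e., by splitting into the cases $c\neq 1$ (Theorem~\ref{thm:MainTheorem}) and $c=1$ (Corollary~\ref{cor:GO}) exactly as you do. Your added verifications of $\tilde{\nabla}\varphi=0$ via Proposition~6.1 and Lemma~\ref{lem:sigma-connection} are more explicit than what the paper spells out here, but they are precisely the facts the paper has already established and implicitly invokes.
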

Let us consider the 
Boothby-Wang fibration 
$\pi:\mathscr{M}^3(c)\to\mathscr{M}^2(c+3)$ of a 
Sasakian space form onto the space form $\mathscr{M}^2(c+3)$.
Then we can project down a homogeneous contact Riemannian structure 
$S$ to $\mathscr{M}^2(c+3)$. 
More precisely, 
we can define a tensor filed $\bar{S}$ on 
$\mathscr{M}^2(c+3)$ by 
\[
\bar{S}(\bar{X})\bar{Y}=\pi_{*}(S(\bar{X}^{\mathsf h})\bar{Y}^{\mathsf h})
\] 
for any vector fields
$\bar{X}$, $\bar{Y}$ on $\mathscr{M}^2(c+3)$.   
Here the superscript 
$\mathsf{h}$ denotes the horizontal lift. The 
tensor field $\bar{S}$ is a homogeneous Riemannian structure on 
$\mathscr{M}^2(c+3)$. 
This homogeneous Riemannian structure is 
called the \emph{reduced homogeneous 
Riemannian structure} (see \cite[Corollary 6.1.6]{CalLo}). 
If we choose $S=A^r$, then we have $\bar{S}=0$. 
Note that $\mathscr{M}^2(c+3)$ has only 
trivial homogeneous Riemannian structures if $c+3\geq 0$. 
We can interpret this fact as 
$A^r$ is derived from the trivial homogeneous structures 
of $\mathscr{M}^2(c+3)$. 

On the other hand, in case $c+3<0$, 
we have three coset space representations 
of $\mathscr{M}^3(c)$;
\[
(\widetilde{\mathrm{SL}}_2\mathbb{R}\times\mathrm{SO}(2))/\mathrm{SO}(2),
\quad 
\widetilde{\mathrm{SL}}_2\mathbb{R}/\{\mathsf{e}\},
\quad 
\tilde{G}(c)/\{\mathsf{e}\}.
\]
In the first and two representations, homogeneous 
Riemannian structures are members of Okumura's family. Thus 
these homogeneous 
Riemannian structures are derived from the trivial 
homogeneous Riemannian structures of $\mathbb{H}^2(c+3)$.
On the other hand, in the last case, 
The Lie algebra $\mathfrak{g}\cong\mathfrak{ga}(1)\oplus
\mathbb{R}$ is the $1$-dimensional extension of $\mathfrak{ga}(1)$. 
The Cartan-Schouten's $(-)$-connection of $\tilde{G}(c)$ is 
the extension of that of the solvable Lie group model 
of $\mathbb{H}^2(c+3)$. In this sense, 
the Cartan-Schouten's $(-)$-connection of $\tilde{G}(c)$ 
is derived from the non trivial homogeneous Riemannian 
structure of type $\mathcal{T}_1$ of $\mathbb{H}^2(c+3)$ 
explained in Theorem \ref{thm:1.5} (see Remark \ref{rem:H2}).

\end{document}